\documentclass[11pt,reqno,a4paper]{amsart}

\usepackage{amsmath,amssymb,color}
\usepackage{amsfonts, amscd, epsfig, amsmath, amssymb,enumerate}
\usepackage{graphicx}
\usepackage{graphics}
\usepackage{color}
\usepackage{mathrsfs}
\usepackage[T1]{fontenc} 
\usepackage{mathptmx}
\usepackage{todonotes}
\usepackage{soul}
\usepackage{breqn}
\usepackage[margin=3cm]{geometry}

\newtheorem{theorem}{Theorem}[section]
\newtheorem{lemma}[theorem]{Lemma}
\newtheorem{proposition}[theorem]{Proposition}
\newtheorem{corollary}[theorem]{Corollary}
\newtheorem{remark}[theorem]{Remark}

\usepackage{tikz}
\usetikzlibrary{backgrounds}
\usetikzlibrary{patterns,fadings}
\usetikzlibrary{arrows,decorations.pathmorphing}
\usetikzlibrary{decorations}
\usetikzlibrary{calc}
\usetikzlibrary{shapes.misc}

\usepackage{setspace}
\usepackage[colorlinks=true,linkcolor=blue,citecolor=magenta]{hyperref}

\numberwithin{equation}{section}
\numberwithin{figure}{section}

\newcommand{\mc}[1]{{\mathcal #1}}

\newcommand{\<}{\big\langle}
\renewcommand{\>}{\big\rangle}

\renewcommand{\epsilon}{\varepsilon}

\newcommand{\R}{\mathbb R}
\newcommand{\Z}{\mathbb Z}
\newcommand{\N}{\mathbb N}
\renewcommand{\P}{\mathbb P}
\newcommand{\T}{\mathbb T}
\newcommand{\E}{\mathbb E}

\allowdisplaybreaks %%Pour \UTF{00E9}viter les grands espacements verticaux.

\title[Nonequilibrium fluctuations and moderate deviations]{Nonequilibrium fluctuations  and moderate deviations for the occupation time of the SSEP with Glauber dynamics}

\author{Linjie Zhao}
\address{School of Mathematics and Statistics, and Hubei Key Laboratory of Engineering Modeling and Scientific Computing, Huazhong University of Science and Technology, Wuhan 430074, China.}
\email{linjie\_zhao@hust.edu.cn}

\thanks{\textbf{Acknowledgments.}  The project is supported by the National Natural Science Foundation of China
	with grant numbers 12401168 and 12371142.}

\keywords{Exclusion process; fluctuations; Glauber dynamics; moderate deviations; occupation time.}
\begin{document}

\maketitle

\begin{abstract}
We study the symmetric simple exclusion process with Glauber dynamics. When the process starts from a nonequilibrium measure, we prove central limit theorems for the occupation time in dimension two, and sample path moderate deviation principles in dimension one. For the fluctuations, we use the martingale method and the sharp relative entropy method from \cite{jaram18nonequilireaction}. For the moderate deviations, the main idea is to relate the occupation time to the density fluctuation field by using the logarithmic Sobolev inequality from the Glauber dynamics.
\end{abstract}

\section{Introduction}

The exclusion process is a prototype model in  interacting particle systems and statistical physics. In the dynamics, particles perform random walks on some graph subject to the exclusion rule, that is, there is at most one particle at each vertex of the underlying graph. On top of the exclusion process, we add a Glauber dynamics, which means that particles can be created or destroyed at each vertex according to some rate depending on the configuration. This model was studied by De Masi, Ferrari and Lebowitz \cite{de1986reaction} in order to derive reaction diffusion equations from microscopic models, and has received much attention since then.

In this paper, we study additive functionals of the process. The study of fluctuations of additive functionals of particle systems has a long history. For exclusion processes with stationary initial distributions, this issue was first studied in \cite{kipnis1987fluctuations} and \cite{kipnis1986central}. In the latter paper, Kipnis and Varadhan introduced the well-known martingale method. Since then, stationary fluctuations for the additive functionals have been investigated in various settings \cite{bernardin2004fluctuations,bernardin2016occupation,gonccalves2013scaling,li2008upper,seppalainen2003transience,sethuraman2000central,sethuraman2006centralcor,sethuraman2006superdiffusivity,sethuraman1996central}. We remark that in the previous literature  the process starts from stationary measures. Less is known when the initial measure is not stationary. Recently, Xu, Erhard and Franco \cite{erhard2024nonequilibrium} studied nonequilibrium fluctuations for the occupation time of the symmetric simple exclusion process (SSEP) in dimension one.  Their proof is based on self-duality of the SSEP and sharp  correlation estimates.  This result was extended by Xu and the author in \cite{xu2025nonequilibrium} to higher dimensions by using the martingale method and correlation estimates.  Both the previous papers used self-duality of the SSEP. For particle systems lack of self-duality, Xu and Fontes  \cite{fontes2021additive}   investigated  the weakly asymmetric simple exclusion process in dimension one by using  the sharp relative entropy bound  by Jara and Menezes \cite{jara2018non}.

The first main result of this paper concerns about nonequilibrium fluctuations of the occupation time.  Based on the techniques of \cite{fontes2021additive}, one can prove directly  the invariance principle for the degree-one additive functionals of the process in dimension one, see Propositions \ref{prop occup time invar} and \ref{prop add func invar}.  For this reason, we only state the result without proof and focus on higher dimensions.  In Theorem \ref{thm fluctuations in two dimension}, we prove the central limit theorem for the occupation time in dimension two in the sense of finite dimensional distributions. At this point, we are not aware of how to prove the tightness of the occupation time since it is hard to obtain sharp correlation estimates as for the SSEP in  \cite{xu2025nonequilibrium}.  It also remains open to extend the results to three or higher dimensions.

The proof of Theorem \ref{thm fluctuations in two dimension} follows from the martingale method. We decompose the occupation time as a martinale and some negligible term. Compared with the SSEP in  \cite{xu2025nonequilibrium}, we need to deal with higher-order terms due to the Glauber dynamics.  These terms can be killed by using the main lemma and sharp relative entropy bound from \cite{jaram18nonequilireaction}, which however only works in dimension two.

The second main result of this paper studies nonequilibrium moderate deviations of the occupation time. The scaling of moderate deviations is between large deviations and fluctuations. Large deviations for the occupation time of the exclusion process were studied in \cite{landim1992occupation,chang2004occupation}. Recently, Gao and Quastel in \cite{gao2025deviation} proved moderate deviation principles (MDP) for the occupation time of the SSEP on $\Z^d$ and in \cite{gao2024moderate} for the additive functionals of particle systems with mixing conditions at one  time point. Their results were extended to sample path MDP for the SSEP on regular trees by Xue \cite{xue2026equilibrium,xue2025central}, and for the SSEP  on $\Z$ by the author \cite{zhao2025moderate}. In Theorem \ref{thm occu mdp}, we prove sample path MDP for the occupation time of the process with non-stationary initial measures in dimension one, and extend the result to degree-one additive functionals in Corollary \ref{cor mdp degree one}.  As far as we know, this is the first result concerning about the MDP for the occupation time in the non-stationary setting. It remains an interesting question to extend the results to higher dimensions and to general additive functionals.

The main idea of the proof of Theorem \ref{thm occu mdp} and Corollary \ref{cor mdp degree one} is as follows.  In dimension one, one can relate the occupation time to the density fluctuation field of the process, as observed by Gon{\c c}alves and Jara \cite{gonccalves2013scaling} when considering equilibrium fluctuations of the occupation time. On the level of moderate deviations, this was done by the author in \cite{zhao2025moderate} in the stationary setting. When the initial measure is not stationary, we relate  the occupation time to the density fluctuation field in Lemma \ref{lem superexponential replacement} by using the logarithmic Sobolev inequality from the Glauber dynamics.  Once this is done, by using the MDP for the density fluctuation field proved in \cite{zhao2026moderate} and the contraction principle, we prove sample path MDP for the occupation time. 

The rest of the paper is as follows.  In section \ref{sec: model} we introduce the model and state the main results rigorously.  See Theorem \ref{thm fluctuations in two dimension} for the fluctuations of the occupation time in dimension two, and Theorem \ref{thm occu mdp} for the moderate deviations.  In section \ref{sec: preliminary} we introduce some preliminary results used throughout the proof. Sections \ref{sec: fluctuations} and \ref{sec:mdp} are devoted to the proof of fluctuations and moderate deviations respectively.

\section{Model and results}\label{sec: model}

The state space of the  model is $\Omega_{n}^d := \{0,1\}^{\T_n^d}$, where $\T_n^d := \Z^d / (n \Z^d)$ is the $d$-dimensional discrete torus with size $n \in \N:= \{1,2,\ldots\}$.    For a configuration $\eta \in \Omega_n^d$, $\eta_x \in \{0,1\}$ denotes the number of particles at site $x \in \T_n^d$. The infinitesimal generator of the process  is $L_n = n^2 L_n^{\rm ex} + L_n^{\rm r}$.  Here,  the generators $L_n^{\rm ex}$ and $L_n^{\rm r}$ correspond to the symmetric simple exclusion process (SSEP) and the Glauber dynamics respectively.  Precisely,  for any function $f: \Omega_n^d \rightarrow \R$,
\begin{align*}
	L_n^{\rm ex} f (\eta) &= \sum_{x,y \in \T_n^d \atop |x-y|=1}  [f(\eta^{x,y}) - f(\eta)],\\
	L_n^{\rm r} f (\eta) &= \sum_{x \in \T_n^d} c_x (\eta) [f(\eta^{x}) - f(\eta)].
\end{align*}
In this formula, the sum  $\sum_{x,y \in \T_n^d \atop |x-y|=1} $ is taken over all unordered bonds of $\T^d_n$. For $x,y \in \T_n^d$, $\eta^{x,y}$ is the configuration obtained from $\eta$ by swapping the values of $\eta_x$ and $\eta_y$,  and $\eta^x$ is the one by flipping the value of $\eta_x$, that is,
\[\eta^{x,y}_z = \begin{cases}
	\eta_x, \quad &\text{if } z=y,\\ 
	\eta_y, \quad &\text{if } z=x,\\
	\eta_z, \quad &\text{otherwise.}    
\end{cases} \qquad \eta^{x}_z = \begin{cases}
1- \eta_x, \quad &\text{if } z=x,\\ 
\eta_z, \quad &\text{otherwise.}    
\end{cases}\]
The flipping rate $c_x (\eta)$ is defined as
\[c_x (\eta) = \Big(a + \frac{\lambda}{2d} \sum_{y: |y-x|=1} \eta_y  \Big) (1-\eta_x) + b \eta_x,\]
where $a,b > 0$ and $\lambda > -a$ are given parameters.  Note that, when $a=0$, $L_n^{\rm r}$ is the generator of the standard contact process, see \cite{liggettips} for example.

For $\rho \in (0,1)$, let $\nu^n_\rho$ be the Bernoulli product measure on $\Omega_n^d$ with density $\rho$,
\[\nu^n_\rho (\eta_x = 1, \, \forall x \in A) = \rho^{|A|}, \quad \forall A \subset \T_n^d.\]
It is well known that the exclusion process with generator $L_n^{\rm ex}$ is reversible with respect to the measure $\nu^n_\rho$, see \cite{liggettips} for example. 

For $\rho \in [0,1]$, define
\[F(\rho) = E_{\nu^n_\rho} [c_x (\eta) (1-2\eta_x)] = (a+\lambda \rho) (1-\rho) - b \rho.\]
Since $F$ is a quadratic polynomial such that $F(0) = a > 0, F(1) = -b < 0$,  there exists a unique point $\rho_* \in (0,1)$ such that $F(\rho_*) = 0$.

Fix a time horizon $T > 0$. Let $\eta(t)$ be the process induced by the generator $L_n$ and initial measure $\nu_{\rho_*}^n$. Note that $\nu_{\rho_*}^n$ is not stationary for the process $\eta(t)$.  We omit the dependence of $\eta(t)$ on the scaling parameter $n$ to simplify notations. Let $\P^n_{\rho_*}$ be the measure on the path space $D ([0,T], \Omega_n^d)$, equipped with the Skorokhod topology, induced by the process $\eta (t)$. Denote by $\E^n_{\rho_*}$ the corresponding expectation. 

We are interested in the occupation time of the  process at the origin, which after rescaling is defined as
\[\Gamma^n (t) = \beta_{d,n} \int_0^t \bar{\eta}_0 (s)  ds,\]
where $\bar{\eta}_0 = \eta_0 - \rho_*$, and 
\begin{equation}
	\beta_{d,n} = \begin{cases}
		\sqrt{n}, \quad &\text{if} \quad d=1,\\
		\frac{n}{\sqrt{\log n}}, \quad &\text{if} \quad d=2,\\
		n, \quad &\text{if} \quad d\geq 3.
	\end{cases}
\end{equation}

\begin{remark}
	We rescale the occupation time by $\beta_{d,n}$ since this is the right scaling for the occupation time of the SSEP \cite{kipnis1987fluctuations}. Since the exclusion process is sped up by $n^2$ while the Glauber dynamics is not, it is natural to expect that this is also the right scaling for the SSEP with Glauber dynamics.
\end{remark}

\subsection{Fluctuations} In this subsection, we study fluctuations for the occupation time.   

\subsubsection{The case $d=1$}  To state the result in dimension one, we need to introduce the density fluctuation field of the process. Let $\mathcal{D} (\T)$ be the space of smooth functions on $\T$ equipped with the uniform topology, and  let $\mathcal{D}^\prime (\T)$ be its topological dual.  We equip  the space $D([0,T], \mathcal{D}^\prime (\T))$ of C{\`a}dl{\`a}g trajectories  with the uniform weak topology: a sequence $\{\mu^n_\cdot\}_{n \geq 1}$  converges to $\mu_\cdot$ in $D([0,T], \mathcal{D}^\prime (\T))$ if and only if for any $H \in \mathcal{D} (\T)$, 
\[\lim_{n \rightarrow \infty} \sup_{0 \leq t \leq T} |\<\mu^n_t,H\> - \<\mu_t,H\>| = 0.\]
The density fluctuation field of the process is defined as
\[\mathcal{Y}^n_t (H) = \frac{1}{\sqrt{n}} \sum_{x \in \T_n} \bar{\eta}_x (t) H(\tfrac{x}{n}), \quad H \in \mathcal{D} (\T).\]

The following result was  proved in \cite{jaram18nonequilireaction}, see also \cite[Proposition 4.1]{gonccalves2024clt}.

\begin{proposition}\label{prop density fluc}  There exists $\lambda_c > 0$ such that for any $\lambda \in [-\lambda_c,\lambda_c]$, the sequence of processes $\{\mathcal{Y}^n_t, 0 \leq t \leq T\}$ converges in distribution, as $n \rightarrow \infty$, in the space $D([0,T], \mathcal{D}^\prime (\T))$ to the process $\{\mathcal{Y}_t, 0 \leq t \leq T\}$, the unique solution to the following SPDE:
	\[\partial_t \mathcal{Y}_t =  \Delta \mathcal{Y}_t + F^\prime (\rho_*) \mathcal{Y}_t +  \sqrt{2 \chi (\rho_*)} \nabla \mathcal{\dot{W}}_t^1+ \sqrt{G(\rho_*)} \mathcal{\dot{W}}_t^2.\]
	Here, $\mathcal{\dot{W}}_t^1$ and $\mathcal{\dot{W}}_t^2$ are two independent $\mathcal{D}^\prime (\T)$-valued white noises, and \[\chi(\rho) = \rho (1-\rho), \quad G(\rho) = F(\rho) + 2b\rho.\] Moreover, the distribution of $\mathcal{Y}_0 (H)$ is  normal  with mean zero and variance $\chi (\rho_*) \|H\|_{L^2 (\T)}^2$ for any $H \in \mathcal{D} (\T)$. 
\end{proposition}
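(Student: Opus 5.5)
We follow the standard martingale approach to nonequilibrium fluctuations, in the spirit of Jara and Menezes. Fix $H \in \mathcal{D}(\T)$. By Dynkin's formula,
\[
\mathcal{M}^n_t(H) = \mathcal{Y}^n_t(H) - \mathcal{Y}^n_0(H) - \int_0^t L_n \mathcal{Y}^n_s(H)\, ds
\]
is a martingale.

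\textbf{The drift.} The exclusion part gives $n^2 L_n^{\rm ex}\mathcal{Y}^n_s(H) = \mathcal{Y}^n_s(\Delta_n H)$, with $\Delta_n$ the discrete Laplacian. This converges to $\mathcal{Y}^n_s(\Delta H)$ uniformly, using the a priori bound $\E[\mathcal{Y}^n_s(G)^2] \le C\|G\|_\infty^2$ discussed below. The Glauber part is
\[
L_n^{\rm r}\mathcal{Y}^n_s(H) = n^{-1/2}\sum_x c_x(\eta)(1-2\eta_x)H(x/n).
\]
Expanding $c_x(1-2\eta_x)$ around $\rho_*$ in the centered variables $\bar\eta_y$, and using $F(\rho_*)=0$, we get a linear part and a quadratic part. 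The linear part equals $F'(\rho_*)\,\mathcal{Y}^n_s(H)$ up to negligible errors. The quadratic part is
\[
-\frac{\lambda}{2d}\, n^{-1/2}\sum_x \bar\eta_x \sum_{|y-x|=1}\bar\eta_y\, H(x/n).
\]
We must show this term vanishes in $L^2(\P^n_{\rho_*})$ after time integration. This is the crux of the argument.

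\textbf{Entropy bound (main obstacle).} We compare the law of $\eta(t)$ with a product Bernoulli measure $\nu^n_{\rho_*}$ and prove the sharp estimate $H(\mu^n_t \mid \nu^n_{\rho_*}) \le C$, uniformly in $t\le T$ and $n$, in $d=1$. To do this we use Yau's inequality,
\[
\frac{d}{dt}H_n(t) \le -n^2 D(\sqrt{f_t}) + \int \nu^{-1}L_n^*\mathbf{1}\, d\mu_t,
\]
and compute $L_n^{*}\mathbf 1$ with respect to $\nu^n_{\rho_*}$. This gives a sum of terms of degree one and two in $\bar\eta$. The degree-one term has coefficient proportional to $F(\rho_*)=0$, so it vanishes. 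The degree-two terms of the form $\sum_x \bar\eta_x\bar\eta_{x+1}$ (times $O(1)$) are controlled by the Jara--Menezes main lemma: for any $\gamma>0$,
\[
\int \sum_x \bar\eta_x\bar\eta_{x+1}\, d\mu \le \gamma n^2 D(\sqrt f) + C\big(H(\mu\mid\nu)+1\big)\quad\text{in } d=1.
\]
This bound rests on a renormalization/multiscale argument and a concentration inequality for product measures. Gronwall then yields the $O(1)$ entropy bound. This is the step requiring smallness of $|\lambda|$: the constants must close for degree-two corrections coming from the $\lambda$-term, and $\lambda_c$ arises here.

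\textbf{Consequences of the entropy bound.} With the $O(1)$ entropy bound, the entropy inequality together with the same main lemma shows that
\[
\E\Big|\int_0^t n^{-1/2}\sum_x H(x/n)\,\bar\eta_x\bar\eta_{x+1}(s)\,ds\Big|^2 \to 0,
\]
since the lemma gives an $O(n^{-1/2}\times \text{polylog})$ bound. The same entropy bound gives the second-moment bounds for $\mathcal{Y}^n$ used earlier.

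\textbf{Quadratic variation and conclusion.} The quadratic variation of $\mathcal{M}^n(H)$ is
\[
\int_0^t \Big[\frac{n^2}{n}\sum_x (\eta_{x+1}-\eta_x)^2\big(\nabla_n H(x/n)\big)^2 n^{-2} + \frac1n\sum_x c_x(\eta)H(x/n)^2\Big]ds.
\]
By a law of large numbers derived from the entropy bound (the hydrodynamic limit is constant $\rho_*$), this converges to
\[
t\Big(2\chi(\rho_*)\|\nabla H\|^2 + E_{\nu_{\rho_*}}[c_0]\,\|H\|^2\Big),
\]
and $E_{\nu_{\rho_*}}[c_0]=G(\rho_*)$. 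Tightness in $D([0,T],\mathcal{D}'(\T))$ follows from Mitoma's criterion together with Aldous' criterion for each $H$, using the moment bounds. The initial field is Gaussian with variance $\chi(\rho_*)\|H\|^2$ by the classical CLT under the product measure. Limit points solve the martingale problem for the stated Ornstein--Uhlenbeck SPDE, which has a unique solution, so the whole sequence converges.
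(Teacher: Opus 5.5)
Your outline is correct and is essentially the Jara--Menezes relative-entropy argument the paper relies on; the paper cites \cite{jaram18nonequilireaction} and \cite[Proposition 4.1]{gonccalves2024clt} rather than giving a proof, your $O(1)$ entropy bound is Lemma~\ref{lem relative entropy} for $d=1$, and your static-to-dynamic step for the quadratic term is a Feynman--Kac argument as in Lemma~\ref{lem: second order term}, which gives convergence in probability (enough here) rather than the $L^2$ convergence you state. One correction: on a fixed horizon your Gronwall step closes for every $\lambda>-a$, because the main lemma absorbs the Dirichlet form with an arbitrarily small constant, so $\lambda_c$ does not arise there; smallness of $\lambda$ matters when one wants the entropy bound uniform in time via the log-Sobolev inequality of the Glauber part.
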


\begin{remark}
	Central limit theorems for the fluctuation field also hold for $d = 2,3$ in the sense of finite dimensional distributions. We state the result only in one dimension since this is what we need in this article.
\end{remark}

\begin{remark}
In the rest of this article, we fix $\lambda_c$ small enough so that  Proposition \ref{prop density fluc} and \eqref{lambda c condition} below hold, and choose the parameter $\lambda \in [-\lambda_c,\lambda_c]$.
\end{remark}

Let $C([0,T], \R)$ be the space  of continuous functions on $[0,T]$ equipped  with the uniform topology. Let $J \geq 0$ be a standard mollifier, that is,
\[\int_{\R} J (u) du= 1, \quad J (u) = J(-u), \quad {\rm supp}\; J \subset (-1,1).\]
For any $\varepsilon > 0$, define $J_{\varepsilon} (u) = \varepsilon^{-1} J(u/\varepsilon),\; u \in \R$.  Intuitively, for $\varepsilon$ small, 
\begin{align*}
	\int_0^t \mathcal{Y}^n_s (J_\varepsilon) ds &= \sqrt{n} \int_0^t \frac{1}{n} \sum_{x \in \T_n} \bar{\eta}_x (s) J_{\varepsilon} (\tfrac{x}{n}) ds\\
	&\approx \sqrt{n} \int_0^t \bar{\eta}_0 (s) ds.
\end{align*}
Thus, the limit of $\Gamma^n (t)$, as $n \rightarrow \infty$, should coincide with that of \[Z_t^\varepsilon := \int_0^t \mathcal{Y}_s (J_\varepsilon) ds\] as $\varepsilon \rightarrow 0$. This is the content of the following proposition. 

\begin{proposition}\label{prop occup time invar}
Let $\{\mathcal{Y}_t, 0 \leq t \leq T\}$ be the process introduced in Proposition \ref{prop density fluc}. The following statements hold.
\begin{itemize}
	\item[(i)] The process  $\{Z_t^\varepsilon, 0\leq t \leq T\}$ converges in distribution, as $\varepsilon \rightarrow 0$, to a Gaussian process $\{Z_t, 0 \leq t \leq T\}$ in the space $C([0,T], \R)$.
	\item[(ii)] The process $\{ \Gamma^n (t), 0 \leq t \leq T\}$ converges in distribution, as $n \rightarrow \infty$, to the same process $\{Z_t, 0 \leq t \leq T\}$ as in {\rm (i)} in the space $C([0,T], \R)$.
\end{itemize} 
\end{proposition}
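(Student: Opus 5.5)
For part (i), I will use the explicit Gaussian structure of the limiting SPDE in Proposition~\ref{prop density fluc}. Writing $\kappa = F'(\rho_*)$ and $P_t$ for the heat semigroup on $\T$, the mild solution is
\[
\mathcal Y_t(H) = \mathcal Y_0(e^{\kappa t}P_tH) + \int_0^t \sqrt{2\chi(\rho_*)}\, d\mathcal W^1_s\bigl(\nabla e^{\kappa(t-s)}P_{t-s}H\bigr) + \int_0^t \sqrt{G(\rho_*)}\, d\mathcal W^2_s\bigl(e^{\kappa(t-s)}P_{t-s}H\bigr).
\]
Consequently $Z^\varepsilon$ is a centered Gaussian process in $C([0,T],\R)$. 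Convergence as $\varepsilon\to0$ therefore reduces to two facts. First, the covariances $E[Z^\varepsilon_tZ^{\varepsilon}_s]$ converge. Second, the increments satisfy a tightness bound of the form
\[
\sup_\varepsilon E\bigl[(Z^\varepsilon_t-Z^\varepsilon_s)^2\bigr]\le C|t-s|^{1+\gamma}\quad\text{for some }\gamma>0.
\]
Gaussianity upgrades this bound to all moments, and Kolmogorov's criterion then gives tightness. To get both facts I will show that $\{Z^\varepsilon_t\}_\varepsilon$ is Cauchy in $L^2$, uniformly in $t$. The quantity $E[(Z^\varepsilon_t-Z^{\varepsilon'}_t)^2]$ is an explicit double time integral of heat-kernel expressions. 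Using $\|P_r(J_\varepsilon-J_{\varepsilon'})\|_{L^2}^2\lesssim \min(r^{-1/2},\varepsilon^{-1})$ and the analogous bound for the gradient term, integrated against the second time variable, the singularity $r^{-1/2}$ is integrable in one dimension. For the gradient noise, the relevant double integral is
\[
\int_0^t\Bigl\|\int_s^t\nabla P_{r-s}J_\varepsilon\,dr\Bigr\|^2_{L^2}ds.
\]
This is handled by Fourier series: the inner integral has coefficients $\hat J(\varepsilon k)\,(1-e^{-k^2(t-s)})/k$ up to the factor $e^{\kappa\cdot}$. Dominated convergence (note $\sum_k \min(t,k^{-2})<\infty$) then gives convergence, and it also gives the increment bound with $\gamma=1/2$.

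For part (ii), I will use a standard approximation argument of Billingsley type in three steps. First, $\int_0^\cdot \mathcal Y^n_s(J_\varepsilon)\,ds\to Z^\varepsilon$ for each fixed $\varepsilon$. This follows from Proposition~\ref{prop density fluc}, because the map $\mu\mapsto\int_0^\cdot\mu_s(J_\varepsilon)\,ds$ is continuous from $D([0,T],\mathcal D'(\T))$ to $C([0,T],\R)$. Second, $Z^\varepsilon\to Z$ by part (i). Third, I need the key replacement estimate
\[
\lim_{\varepsilon\to0}\limsup_{n\to\infty}\E^n_{\rho_*}\Bigl[\sup_{t\le T}\Bigl|\Gamma^n(t)-\int_0^t\mathcal Y^n_s(J_\varepsilon)\,ds\Bigr|^2\Bigr]=0.
\]
The difference equals $\sqrt n\int_0^t\frac1n\sum_x(\bar\eta_0(s)-\bar\eta_x(s))J_\varepsilon(x/n)\,ds$, which is a sum of gradients. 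Following \cite{fontes2021additive}, I will bound it as follows. The first tool is a Kipnis--Varadhan / integration-by-parts estimate with respect to the product measure $\nu^n_{\rho_*}$. Writing $\bar\eta_0-\bar\eta_x$ as a telescoping sum along a path of length $|x|$, I will use $\E\bigl[\sup_t|\int_0^t\sum_y g_y(\eta_{y}-\eta_{y+1})\,ds|^2\bigr]\lesssim T n^{-2}\sum_y g_y^2$ plus an entropy correction. With $g_y$ of size $\sum_{x>y}J_\varepsilon(x/n)/n\lesssim1$, supported on $|y|\le\varepsilon n$, this gives $n\cdot n^{-2}\cdot\varepsilon n=\varepsilon$. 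The second tool is the sharp relative entropy bound $H(\mu^n_t\,|\,\nu^n_{\rho_*})=O(1)$ in $d=1$ from \cite{jaram18nonequilireaction}, which controls the fact that the process is not stationary. The Glauber dynamics contributes only an $O(1)$ term to the Dirichlet form, while the exclusion part is sped up by $n^2$. I expect this third step to be the main obstacle: the process is out of equilibrium, the entropy bound is only $O(1)$, and the entropy inequality loses the square structure. I will therefore use the time-dependent variational formula for the whole path, combined with the carr\'e-du-champ estimate, choosing the test constant $\sim\varepsilon^{-1/2}$ so that both the entropy term and the variational term vanish as $\varepsilon\to0$. Finally, tightness of $\Gamma^n$ in $C$ follows from the same bound together with the tightness of the smoothed processes.
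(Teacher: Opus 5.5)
\textbf{Verdict: genuine gap in the third step.} Your architecture matches what the paper intends. The paper omits the proof and defers to Fontes--Xu, but it carries out the same replacement at moderate-deviation scale in Lemma \ref{lem superexponential replacement}. Part (i) by Gaussian/Fourier computations is fine. So are the continuous-mapping step from Proposition \ref{prop density fluc}, the use of $Z^\varepsilon\to Z$, and your count $n\cdot n^{-2}\cdot\varepsilon n=\varepsilon$ for the gradient term. The gap sits exactly where you flag the obstacle: you never identify what the lack of stationarity costs in this model.

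Since the initial law is $\nu^n_{\rho_*}$ itself, the natural path-level tool is the Feynman--Kac bound with reference measure $\nu^n_{\rho_*}$, with no initial entropy to pay. This measure is invariant for the exclusion part but not for the Glauber part, so the variational expression is
\[
\sup_f\Big\{\theta\int X f\,d\nu^n_{\rho_*}+\frac12\int L_n^*\mathbf 1\,f\,d\nu^n_{\rho_*}-\int\Gamma_n(\sqrt f)\,d\nu^n_{\rho_*}\Big\},\qquad \frac12L_n^*\mathbf 1=\frac{\lambda}{2\rho_*}\sum_{x\in\T_n}\bar\eta_x\bar\eta_{x+1},
\]
by \eqref{eqn13}, where $X=n^{-1/2}\sum_x(\eta_0-\eta_x)J_\varepsilon(x/n)$.

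The quadratic term $L_n^*\mathbf 1$ can be of order $n$ for adversarial densities $f$. It is not an ``$O(1)$ contribution to the Dirichlet form''; the Glauber part of the Dirichlet form is harmless, and in fact helpful. A Kipnis--Varadhan bound ``plus an entropy correction'' does not control this term either. That bound, and its maximal-in-time version, rely on stationarity and reversibility, which you do not have. Until $L_n^*\mathbf 1$ is handled, the choice $\theta\sim\varepsilon^{-1/2}$ balances nothing.

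The missing step is available in the paper, by either of two routes.

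\textbf{Route one}, as in the proof of Lemma \ref{lem superexponential replacement}. Combine Lemma \ref{lem: replacement} with $\ell\sim n$, the entropy inequality with sub-Gaussian bounds, the log-Sobolev inequality \eqref{log sob ineq} for the Glauber part, and the smallness $|\lambda|\le\lambda_c$ from \eqref{lambda c condition}. This gives \eqref{Ln* 1}. Add the integration-by-parts bound $\theta\int Xf\,d\nu^n_{\rho_*}\le\frac{n^2}{2}\int\Gamma_n^{\rm ex}(\sqrt f)\,d\nu^n_{\rho_*}+C\theta^2\varepsilon$. Together these yield $\log\E^n_{\rho_*}[\exp\{\theta\int_s^tX\,dr\}]\le C(t-s)\theta^2\varepsilon+C$. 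So the increments are sub-Gaussian at scale $\sqrt{\varepsilon(t-s)}$. The supremum over $t$ then follows from Garsia--Rodemich--Rumsey (Lemma \ref{lem GRR inequality}) or Kolmogorov, rather than from a maximal Kipnis--Varadhan inequality. This proves your $L^2$ estimate with the supremum inside.

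\textbf{Route two}, as in Lemma \ref{lem: second order term}. Subtract the compensator $\theta^{-1}\frac{\lambda}{2\rho_*}V_1^\ell(\mathbf 1,\eta)+\frac{\lambda^2}{2\rho_*^2n^2\theta}\sum_{y,z}h^\ell_{y,z}(\mathbf 1,\eta)^2$, which makes the variational supremum nonpositive. Then bound the compensator's expectation with the one-time entropy bound of Lemma \ref{lem relative entropy}, via \eqref{V i ell bound}--\eqref{h ell 1} with $\ell\sim n$; this gives $O(\theta^{-1})$ in $d=1$. This is where the bound $H(\mu^n_s|\nu^n_{\rho_*})=O(1)$ that you cite actually enters. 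It handles fixed times, but the supremum over $t$ needs extra work.

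Either way, your proposal omits exactly the ingredient specific to the Glauber dynamics.
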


The above result can be extended to additive functionals  with degree one. Precisely speaking, let $f: \Omega_{n}^d \rightarrow \R$ be a local function such that 
\begin{equation}\label{degree one}
\varphi_f (\rho_*) = 0, \quad \varphi_f^\prime (\rho_{*}) \neq 0,
\end{equation}
where $\varphi_f (\rho_{*}) = E_{\nu_{\rho_*}^n} [f]$.  Define
\[\Gamma^n_f (t) = \beta_{d,n} \int_0^t \big[f (\eta(s) )- \varphi_f (\rho_{*}) \big]ds.\]
Then, we have the following result.

\begin{proposition}\label{prop add func invar}
Assume that the local function $f: \Omega_{n}^1 \rightarrow \R$ satisfies \eqref{degree one}. Then, 
the process $\{ \Gamma_f^n (t), 0 \leq t \leq T\}$ converges in distribution, as $n \rightarrow \infty$, to the process $\{\varphi_f^\prime (\rho_{*}) Z_t, 0 \leq t \leq T\}$.
\end{proposition}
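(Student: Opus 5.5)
We reduce Proposition \ref{prop add func invar} to Proposition \ref{prop occup time invar}(ii). The reduction is a replacement lemma: in $L^1$ or $L^2$ uniformly in time, we replace $f$ first by $\varphi_f'(\rho_*)$ times the local average of $\bar\eta$ around the origin, and then by $\varphi_f'(\rho_*)\bar\eta_0$. Throughout, $f$ is a local function on $\Omega^1_n$, and $\tau_x$ denotes the shift.

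\textbf{Step 1 (algebraic decomposition).} Since $f$ is local, we expand it in the orthogonal basis $\prod_{x\in A}\bar\eta_x$, $A$ finite, of $L^2(\nu^n_{\rho_*})$:
\[
f-\varphi_f(\rho_*)=\sum_{|A|=1}c_A\bar\eta_{x_A}+\sum_{|A|\ge2}c_A\prod_{x\in A}\bar\eta_x .
\]
A direct computation gives $\sum_{|A|=1}c_A=\varphi_f'(\rho_*)$. We then write
\[
\Gamma^n_f(t)=\varphi_f'(\rho_*)\Gamma^n(t)+\sqrt n\sum_{x}c_{\{x\}}\int_0^t(\bar\eta_x-\bar\eta_0)(s)\,ds+\sqrt n\int_0^t \Psi(\eta(s))\,ds,
\]
where $\Psi$ collects the terms of degree at least two. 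Given Proposition \ref{prop occup time invar}(ii), it suffices to show that the last two terms vanish in probability uniformly on $[0,T]$.

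\textbf{Step 2 (gradient terms).} Each term $\bar\eta_x-\bar\eta_0$ is a telescoping sum of $\eta_{y+1}-\eta_y$ over a bounded range of $y$. We handle these with the integration-by-parts estimate in the style of Kipnis--Varadhan, used in \cite{fontes2021additive}. For the exclusion part the relevant bound is
\[
\E\Big[\sup_{t\le T}\Big(\int_0^t(\eta_{y+1}-\eta_y)(s)\,ds\Big)^2\Big]\lesssim \frac{T}{n^2}\,,
\]
where the factor $n^{-2}$ comes from the $n^2$ speed-up. The bound holds under nonequilibrium starting measures up to a relative entropy correction. We control that correction with the sharp entropy bound $H(\mu^n_t\,|\,\nu^n_{\rho_*})=O(1)$ in $d=1$ from \cite{jaram18nonequilireaction}; equivalently, it follows from a Dirichlet form computation with the $H_{-1}$ variational formula. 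Multiplying by $n$ (from $\sqrt n$ squared) leaves $O(1/n)\to0$.

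\textbf{Step 3 (higher-degree terms, the main obstacle).} We write $\prod_{x\in A}\bar\eta_x$ as $\bar\eta_{x_0}\,g$, where $g$ is a mean-zero local function (itself a product of $\bar\eta$'s). We replace $\tau_y$ of this term by its average over a box of size $\ell=\epsilon n$. The replacement cost is controlled exactly as in Step 2 by the one-block/two-blocks estimate with the $n^2$ exclusion speed. The price is a variance bound of order $t\ell/n^2\cdot n$, which is small after multiplying by $n$ once $\ell\ll n$. The averaged quantity is a product of box averages $\overrightarrow{\eta}^{\ell}\cdot\overleftarrow{\eta}^{\ell}$ (mean-zero averages on disjoint boxes). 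Its $L^2$ norm under $\mu^n_s$ is handled by the entropy inequality together with the concentration of products of $\ell$-averages under $\nu_{\rho_*}$. That product is of order $\ell^{-1}$, and $H=O(1)$, so we obtain $\sqrt n\,\E|\cdots|\lesssim\sqrt n/\ell\to0$ for $\ell= n^{1/2+\delta}$. We expect the main difficulty to be balancing the two errors, the replacement cost $n\ell/n^2$ against the averaged term $\sqrt n/\ell$; both vanish for $n^{1/2}\ll\ell\ll n$. For degree two, this main lemma of \cite{jaram18nonequilireaction} (their estimate for quadratic functions with $O(1)$ entropy) can be invoked directly.

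\textbf{Step 4 (tightness and conclusion).} Uniform control in $t$ follows because all the estimates above are stated with $\sup_{t\le T}$ (Kipnis--Varadhan maximal form), or by Kolmogorov-type increment bounds. Slutsky's lemma then transfers the limit of Proposition \ref{prop occup time invar}(ii) to $\varphi_f'(\rho_*)Z$.
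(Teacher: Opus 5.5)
Your plan is correct and is essentially the route the paper has in mind: it is the reduction $\Gamma^n_f\approx\varphi_f'(\rho_*)\Gamma^n$ that the paper carries out at the moderate-deviation level in Subsection~\ref{subsec pf coro}. There, the two extreme factors of each monomial of degree $\ge 2$ are replaced by one-sided box averages via the flow/integration-by-parts estimate, and $\overleftarrow{\eta}^{\ell}[\cdots]\overrightarrow{\eta}^{\ell}$ is then bounded by the entropy inequality with $H(\mu^n_s|\nu^n_{\rho_*})=O(1)$; this is the nonequilibrium local Boltzmann--Gibbs argument of \cite{fontes2021additive} that the paper cites instead of giving a proof. The nonequilibrium correction in your Steps 2--3 is really the term $\frac12\int L_n^*\mathbf{1}\,f\,d\nu^n_{\rho_*}$ in the Feynman--Kac variational formula, absorbed via \eqref{Ln* 1} or the $1/\theta$ device in $B_{\theta,\ell}$, and your Step 2 correctly handles the degree-one gradient terms $\bar\eta_x-\bar\eta_0$, which the decomposition in Subsection~\ref{subsec pf coro} passes over.
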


\begin{remark}
	The above results were proved by Fontes and Xu \cite{fontes2021additive} for the one dimensional weakly asymmetric simple exclusion process. The main idea is to relate the occupation time to the fluctuation field by proving the local Boltzmann-Gibbs principle in the nonequilibrium setting. Since their argument can be adapted to our case directly,  we omit it here.
\end{remark}

For later use, let us denote by $\alpha (\cdot,\cdot)$ the covariance function of the process $Z_t$,
\begin{equation}\label{alpha def}
\alpha (t,s) = \mathrm{Cov} (Z_t,Z_s), \quad s,t \geq 0.
\end{equation}

\subsubsection{The case $d=2$}  In dimension $d=2$,  the limit of the occupation time is given by the Brownian motion, which is the first main result of this article.

\begin{theorem}\label{thm fluctuations in two dimension}
The sequence  $\{ \Gamma^n (t), 0 \leq t \leq T\}$ converges in the sense of finite-dimensional distributions to the process $\{\sqrt{\chi (\rho_*) / \pi} B(t), 0 \leq t \leq T\}$, where $B(\cdot)$ is the standard one-dimensional Brownian motion.
\end{theorem}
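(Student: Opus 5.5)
We follow the martingale (Kipnis–Varadhan type) method of \cite{xu2025nonequilibrium}, adapted to the reaction term. Let $g_n$ solve the resolvent-type Poisson equation on $\T_n^2$,
\[ \big(\lambda_n - n^2\Delta_n\big) g_n(x) = \beta_{2,n}\,\delta_0(x), \]
with $\Delta_n$ the discrete Laplacian. We take $\lambda_n$ either $0$ (after subtracting the mean on the torus) or a small multiple of $1$ so that the problem is well posed. Set $V_n(\eta)=\sum_x g_n(x)\bar\eta_x$. Because $L_n^{\rm ex}$ acts linearly on degree-one functions, $n^2L_n^{\rm ex}V_n=\sum_x n^2\Delta_n g_n(x)\bar\eta_x$. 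Dynkin's formula then gives
\[ \Gamma^n(t)=M^n_t - V_n(\eta(t))+V_n(\eta(0)) + \int_0^t \Big[L_n^{\rm r}V_n(\eta(s)) + \lambda_n V_n(\eta(s))\Big]ds , \]
where $M^n_t$ is a martingale. The two-dimensional Green function estimate gives $g_n(x)\approx \frac{\beta_{2,n}}{n^2}\cdot\frac{1}{2\pi}\log(n/|x|)$. This yields $\sum_x g_n(x)^2 = O(1/\log n)$, while $\sum_x \sum_{|e|=1} n^2 (g_n(x+e)-g_n(x))^2 \sim \frac{\beta_{2,n}^2}{n^2}\cdot\frac{\log n}{\pi}=\frac1\pi$.

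\textbf{Step 1 (martingale part).} The quadratic variation of $M^n$ consists of an exclusion part and a Glauber part. The exclusion part is $\int_0^t \sum_{x\sim y} n^2 (g_n(y)-g_n(x))^2(\eta_x-\eta_y)^2ds$. We will show that it converges in $L^1$ to $\frac{1}{\pi}\cdot 2\chi(\rho_*)t/2=\chi(\rho_*)t/\pi$, which requires two facts. First, $(\eta_x-\eta_y)^2$ can be replaced by its mean $2\chi(\rho_*)$. Second, the density stays at $\rho_*$ on average, with errors controlled by the relative entropy. The Glauber part is $\int_0^t\sum_x c_x g_n(x)^2ds=O(1/\log n)$, which is negligible. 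Jumps are bounded by $\max|g_n|+\max|\nabla g_n|\to0$. The martingale FCLT then gives $M^n\Rightarrow\sqrt{\chi(\rho_*)/\pi}B$.

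\textbf{Step 2 (boundary terms).} The second moment of $V_n(\eta(t))$ under $\P^n_{\rho_*}$ is controlled by $\sum_x g_n(x)^2\chi(\rho_*)$ plus correlation errors. To handle those errors we use the entropy inequality together with the sharp relative entropy bound of \cite{jaram18nonequilireaction}, which states that $H(\mu^n_t\mid\nu^n_{\rho_*})=O(\log n)$ in $d=2$ (with $\mu^n_t$ the law of $\eta(t)$). This gives $E[V_n(\eta(t))^2]\to0$. Note that $V_n(\eta(0))$ is exactly mean zero with variance $O(1/\log n)$.

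\textbf{Step 3 (Glauber drift; the main obstacle).} We compute $L_n^{\rm r}\bar\eta_x = c_x(\eta)(1-2\eta_x)$, which is a degree-one plus degree-two polynomial in $\bar\eta$. Writing it as $F'(\rho_*)\bar\eta_x+\frac{\lambda}{4}\sum_{y\sim x}\bar\eta_y\,\cdot$ (stuff) $+$ quadratic terms $\bar\eta_x\bar\eta_y$, the integral becomes
\[ \int_0^t\sum_x g_n(x)\big[\text{linear}+\text{quadratic}\big](\eta(s))\,ds. \]
For the linear part, we replace $\bar\eta_y$ by $\bar\eta_x$ via the Dirichlet form (a one-block-type estimate). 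The result is a smoothed occupation-time functional of size $\int\sum_x g_n\bar\eta_x\,ds$. Its variance is bounded via the $H_{-1}$ norm of $g_n$, that is $\sum_x g_n(-n^2\Delta_n)^{-1}g_n$, which is $O(\log n\cdot n^{-2}\cdot\ldots)\to0$, using the Kipnis–Varadhan inequality in its nonequilibrium form. For the quadratic part $\int\sum_x g_n(x)\bar\eta_x\bar\eta_{x+e}ds$, we invoke the main lemma of \cite{jaram18nonequilireaction}. That lemma bounds $\E\big|\int_0^t\sum_x h(x)\bar\eta_x\bar\eta_{x+e}ds\big|$ by $\|h\|$-type norms times the entropy, with the $O(\log n)$ entropy bound. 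We expect this to be the delicate step, and it is why the argument is restricted to $d=2$. Since $\|g_n\|_{\ell^2}^2=O(1/\log n)$ and $\|g_n\|_\infty=O(\beta_{2,n}\log n/n^2)$, the bound should come out as $o(1)$, but it needs to be checked precisely.

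Combining Steps 1–3 and applying Slutsky's lemma at finitely many times yields convergence of the finite-dimensional distributions.
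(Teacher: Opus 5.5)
Your architecture is the one the paper uses: the resolvent $g_n$, the Dynkin decomposition, the martingale FCLT with bracket converging to $\chi(\rho_*)t/\pi$, and the Glauber quadratic term handled by the Jara--Menezes flow replacement plus the $O(\log n)$ entropy bound. The gap is in Step 2, which does not go through as you describe it. In your normalization $\|g_n\|_{\ell^2}^2\asymp 1/\log n$, and under $\nu^n_{\rho_*}$ the variable $V_n$ is sub-Gaussian with that variance. So the entropy inequality with $H(\mu^n_t\mid\nu^n_{\rho_*})\le C\log n$ gives only $\E^n_{\rho_*}|V_n(\eta(t))|\le C\sqrt{\log n\cdot\|g_n\|_{\ell^2}^2}=O(1)$, and likewise $O(1)$ for the second moment, not $o(1)$. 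This is sharp for the method. Tilting $\nu^n_{\rho_*}$ by $e^{\gamma V_n}$ with $\gamma\asymp\log n$ costs relative entropy of order $\log n$ and shifts the mean of $V_n$ by order one, so the entropy bound alone cannot rule out a boundary term of order one.

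The paper flags exactly this in Remark \ref{rmk betaG_nbounded} and closes it with a different input: the De Masi--Ferrari--Lebowitz bound $\sup_{x\neq y}|\mathrm{Cov}(\eta_x(s),\eta_y(s))|\le Cn^{-2}$ (Lemma \ref{lem: correlation}). It pairs this with $|\E^n_{\rho_*}[\eta_x(t)]-\rho_*|\le Cn^{-2}$, obtained from $m_t'=F(m_t)+O(n^{-2})$, $F(\rho_*)=0$ and Gronwall. Then $\E^n_{\rho_*}[V_n(\eta(t))^2]\le\|g_n\|_{\ell^2}^2+Cn^{-2}\big(\sum_x g_n(x)\big)^2=O(1/\log n)$, since $\sum_x g_n(x)=\beta_{2,n}$.

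The same problem affects the linear part of the Glauber drift in Step 3, and the nonequilibrium Kipnis--Varadhan route does not rescue it.
\begin{itemize}
\item First, $\sum_x g_n(x)\neq 0$, so $\sum_x g_n(-n^2\Delta_n)^{-1}g_n$ is not even defined on the torus. The zero mode is conserved by the exclusion part and can only be controlled through the Glauber Dirichlet form.
\item More seriously, the nonequilibrium Feynman--Kac bound contains $\frac12 L_n^*\mathbf 1=\frac{\lambda}{2d\rho_*}\sum_i V_i(\mathbf 1,\eta)$. In $d=2$, with the available estimates (compare the last two terms of $B_{\theta,\ell}$), this costs of order $(\log n)/\theta$. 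The linear term costs $\theta\|V_n\|_{-1}^2\asymp\theta/\log n$, so optimizing over $\theta$ again leaves only $O(1)$.
\end{itemize}

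Degree-two terms escape this because, after the flow replacement, their cost carries the small factor $\|g_n\|_{\ell^\infty}^2$ (of order $(\log n)/n^2$ in your normalization). That is what makes the window $\log n\ll\theta\ll n^2/(\log n)^2$ available in Lemma \ref{lem: second order term}, with $\ell=\lfloor n/\sqrt{\log n}\rfloor$. So your expectation for the quadratic part is correct.

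For the linear part, no one-block step is needed once you have the correlation estimate. The sum $\sum_x g_n(x)\sum_{y\sim x}\bar\eta_y=\sum_y\bar\eta_y\sum_{x\sim y}g_n(x)$ is again a degree-one field with a kernel of the same type. The $L^2$ bound above therefore kills its time integral, and also the term $\lambda_n\int_0^t V_n(\eta(s))\,ds$.
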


\subsection{Moderate deviations}  In this subsection, we study moderate deviations for the occupation time in $d=1$. Let $\{a_n\}$ be a sequence of positive real numbers such that $1 \ll a_n \ll \sqrt{n}$, where $a_n \ll b_n$ denotes $\lim_{n \rightarrow \infty} a_n / b_n = 0$.
We will investigate  the rescaled occupation time $\Gamma^n (t) / a_n$. 

We first introduce the rate function. For any integer $k > 0$,  any sequence of times $0 \leq t_1 < t_2 < \ldots < t_k \leq T$,  and any path $\gamma \in C([0,T],\R)$,  let us denote $\Sigma_k = \Sigma_{t_1,\ldots,t_k}$ be the $k \times k$ matrix with the $(i,j)$-th entry given by $\alpha (t_i,t_j)$. Denote $\gamma_k = (\gamma (t_1), \ldots, \gamma (t_k))^{\rm T}$.  Then, the moderate deviation rate function is defined as
\[\mc{I}_{T} (\gamma) = \inf \Big\{  \frac{1}{2} \gamma_k^{\rm T} \Sigma_k^{-1} \gamma_k:  t_i \in [0,T] \; \text{for}\; 1 \leq i \leq k, \; k \geq 1\Big\}\]
if $\gamma (0)  = 0$, and define  $\mc{I}_{T} (\gamma) = + \infty$ otherwise.

Below is the second main result of this article.

\begin{theorem}\label{thm occu mdp}
	As $n \rightarrow \infty$, the sequence  $\{ \Gamma^n (t)/a_n, 0 \leq t \leq T\}$ satisfies the moderate deviation principles with decay rate $a_n^2$ and with rate function $\mc{I}_T$ in the space $C([0,T], \R)$. Precisely speaking,
	for any closed set $C$ and any open set $O$ in $C ([0,T], \R)$,
	\begin{align*}
		\limsup_{n \rightarrow \infty} \frac{1}{a_n^2} \log \P^n_{\rho_*} \Big( \{  \Gamma^n (t)/a_n , 0 \leq t \leq T\} \in C\Big) \leq - \inf_{\gamma \in C} \mathcal{I}_T (\gamma),\\
		\liminf_{n \rightarrow \infty} \frac{1}{a_n^2} \log \P^n_{\rho_*} \Big( \{ \Gamma^n (t)/a_n, 0 \leq t \leq T\} \in O\Big) \geq - \inf_{\gamma \in O} \mathcal{I}_T (\gamma).
	\end{align*}
\end{theorem}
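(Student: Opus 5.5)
We follow the strategy announced in the introduction: reduce $\Gamma^n(t)/a_n$ to a time integral of the mollified density fluctuation field, and then transfer a moderate deviation principle from the field to the occupation time by the contraction principle. Define the moderate deviation fluctuation field
\[\mathcal{Y}^{n,a}_t(H) = \frac{1}{a_n\sqrt{n}} \sum_{x\in\T_n} \bar\eta_x(t) H(\tfrac{x}{n}).\]
We take from \cite{zhao2026moderate} that $\{\mathcal{Y}^{n,a}_t\}$ satisfies a sample path MDP in $D([0,T],\mathcal{D}'(\T))$ with speed $a_n^2$. Its rate function $\mathcal{Q}$ is the quadratic functional associated with the linear SPDE of Proposition \ref{prop density fluc}. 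The initial part is $\frac{1}{2\chi(\rho_*)}\|\mu_0\|_{L^2}^2$, and the dynamic part is the energy of the control needed to drive $\partial_t\mu = \Delta\mu + F'(\rho_*)\mu + \nabla(\sqrt{2\chi}\,g_1) + \sqrt{G}\,g_2$.

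\textbf{Step 1: superexponential replacement.} This is Lemma \ref{lem superexponential replacement}. The claim is that for every $\delta>0$,
\[\lim_{\varepsilon\to 0}\limsup_{n\to\infty} \frac{1}{a_n^2}\log \P^n_{\rho_*}\Big(\sup_{0\le t\le T}\Big|\frac{\Gamma^n(t)}{a_n} - \int_0^t \mathcal{Y}^{n,a}_s(J_\varepsilon)\,ds\Big|>\delta\Big) = -\infty.\]
To prove it, apply Chebyshev with an exponential moment and use Feynman–Kac. We compare with a reference product measure $\nu^n_{\rho_*}$. The measure $\nu^n_{\rho_*}$ is not invariant, so the adjoint of $L_n$ under it produces an extra term. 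We bound that term using $F(\rho_*)=0$, the smallness of $\lambda$ (condition \eqref{lambda c condition}), and the log-Sobolev inequality of the Glauber part. The inequality gives $\langle -L_n^{\rm r}\sqrt f,\sqrt f\rangle \ge c\,\mathrm{Ent}(f)$ uniformly in $n$, and this absorbs the error coming from nonstationarity.

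The remaining variational problem is
\[\sup_f\Big\{ \theta a_n \sqrt n\, \langle V_\varepsilon, f\rangle - n^2 D^{\rm ex}(\sqrt f) \Big\},\qquad V_\varepsilon = \bar\eta_0 - \frac{1}{n}\sum_x \bar\eta_x J_\varepsilon(\tfrac{x}{n}).\]
Write $V_\varepsilon$ as a telescoping sum of gradients $\bar\eta_x-\bar\eta_{x+1}$ weighted by $O(n\varepsilon)$ coefficients, and integrate by parts against the exclusion Dirichlet form. This bounds the supremum by $C\theta^2 a_n^2\, n\cdot \varepsilon/n = C\theta^2 a_n^2\varepsilon$. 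The main obstacle is the choice of speed: the cost must be $o(a_n^2)$ per unit $\theta^2$ as $\varepsilon\to0$, and also in the regime $a_n\ll\sqrt n$. Quadratic terms in $f$ arising from the change of measure have to be handled by the entropy bound rather than the Dirichlet form. The sup over $t$ is handled by the standard maximal-inequality argument for integrals, using a martingale plus a Garsia–Rodemich–Rumsey-type bound.

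\textbf{Step 2: contraction.} For fixed $\varepsilon$, the map $\mu\mapsto (\int_0^t\mu_s(J_\varepsilon)ds)_{t\le T}$ is continuous from $D([0,T],\mathcal{D}'(\T))$ to $C([0,T],\R)$. Hence $\int_0^\cdot\mathcal{Y}^{n,a}_s(J_\varepsilon)ds$ satisfies an MDP with rate $\mathcal{I}^\varepsilon(\gamma)=\inf\{\mathcal{Q}(\mu): \int_0^\cdot \mu_s(J_\varepsilon)ds=\gamma\}$. The rate $\mathcal{Q}$ is the Gaussian rate of the linear SPDE, so $\mathcal{I}^\varepsilon$ is the Gaussian rate function of $Z^\varepsilon$. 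Its finite-dimensional form is $\frac12\gamma_k^{\rm T}(\Sigma^\varepsilon_k)^{-1}\gamma_k$, supremized over times, with $\Sigma^\varepsilon$ the covariance of $Z^\varepsilon$.

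\textbf{Step 3: exponentially good approximations.} Step 1 shows that the processes $\int_0^\cdot\mathcal{Y}^{n,a}_s(J_\varepsilon)ds$ form exponentially good approximations of $\Gamma^n/a_n$. By the Dembo–Zeitouni approximation theorem, $\Gamma^n/a_n$ then satisfies the MDP with rate $\sup_{\delta>0}\liminf_{\varepsilon\to0}\inf_{B(\gamma,\delta)}\mathcal{I}^\varepsilon$. Identifying this limit with $\mathcal{I}_T$ uses the convergence $\Sigma^\varepsilon\to\Sigma$ from Proposition \ref{prop occup time invar}(i), together with uniform Gaussian bounds. Specifically, $Z^\varepsilon\to Z$ in $C([0,T],\R)$, with Hölder moments of $Z^\varepsilon-Z$ tending to zero. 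Gaussian concentration (Borell–TIS) then gives exponential tightness and the convergence of rate functions. Exponential tightness of $\Gamma^n/a_n$ in $C([0,T],\R)$ itself follows from Step 1 combined with exponential tightness of the approximating processes.

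\textbf{Summary.} The key difficulty is Step 1 at moderate deviation speed, in particular controlling the nonstationary change of measure via the Glauber log-Sobolev inequality. Steps 2 and 3 are standard contraction and approximation arguments.
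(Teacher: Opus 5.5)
\textbf{Gap in Step 1: the non-stationarity term.} Step 1 fails exactly where you locate the main difficulty. Using $F(\rho_*)=0$ only removes the linear part. In $d=1$ what remains is $\frac12 L_n^*\mathbf 1=\frac{\lambda}{2\rho_*}\sum_{x}\bar\eta_x\bar\eta_{x+1}$, whose fluctuations under $\nu^n_{\rho_*}$ are of order $\sqrt n$. The Glauber log-Sobolev inequality \eqref{log sob ineq} bounds $H(f|\nu^n_{\rho_*})$ by $\kappa\int\Gamma^{\rm r}_n(\sqrt f)\,d\nu^n_{\rho_*}$. If you feed the quadratic term directly into the entropy inequality you get
\[
\int \tfrac12 L_n^*\mathbf 1\, f\,d\nu^n_{\rho_*}\le \frac{\kappa}{\gamma}\int\Gamma^{\rm r}_n(\sqrt f)\,d\nu^n_{\rho_*}+\frac1\gamma\log E_{\nu^n_{\rho_*}}\Big[\exp\Big\{\frac{\gamma\lambda}{2\rho_*}\sum_x\bar\eta_x\bar\eta_{x+1}\Big\}\Big].
\]
Absorbing the first term forces $\gamma\ge\kappa$, and then the second term is at least of order $\kappa\lambda^2 n$. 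After the factor $T/a_n^2$ in the Feynman--Kac bound this is of order $\lambda^2 n/a_n^2\to\infty$, because $a_n\ll\sqrt n$. Smallness of $\lambda$ does not help, since $\lambda$ is fixed as $n\to\infty$.

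Relatedly, the inequality $\langle -L^{\rm r}_n\sqrt f,\sqrt f\rangle\ge c\,{\rm Ent}(f)$ that you state is false for $\lambda\neq0$. Indeed $\langle -L^{\rm r}_n\sqrt f,\sqrt f\rangle=\int\Gamma^{\rm r}_n(\sqrt f)\,d\nu^n_{\rho_*}-\frac12\int L_n^*\mathbf 1\,f\,d\nu^n_{\rho_*}$. For $f\propto\exp\{\beta\sum_x\bar\eta_x\bar\eta_{x+1}\}$, with $\beta$ a suitable small multiple of $\lambda$, this is negative and of order $-\lambda^2 n$.

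\textbf{The missing ingredient.} You need the Jara--Menezes replacement, performed \emph{before} the entropy inequality and paid for by the $n^2$-accelerated exclusion Dirichlet form rather than by entropy. By Lemma \ref{lem: replacement} with the flow of Lemma \ref{lem flow} and $\ell=\lfloor n/8\rfloor$, one replaces $V_1(\mathbf 1,\eta)$ by $V_1^\ell(\mathbf 1,\eta)$. The cost is $\frac{n^2}{4}\int\Gamma^{\rm ex}_n(\sqrt f)\,d\nu^n_{\rho_*}$ plus $\frac{C\lambda^2}{n^2}\int\sum_{y,z}h^\ell_{y,z}(\mathbf 1,\eta)^2f\,d\nu^n_{\rho_*}$.

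After this step both remaining terms are tame. $V_1^\ell(\mathbf 1,\eta)$ is a sum of products of two box averages over $\ell\sim n$ sites, each sub-Gaussian with variance $O(\ell^{-1})$. Each $h^\ell_{y,z}(\mathbf 1,\eta)$ is sub-Gaussian with variance $O(\ell)$. Hence, after the H\"older splitting, both have $O(1)$ logarithmic exponential moments at $\gamma$ of order $1/\lambda$ and $1/\lambda^2$ respectively. Only then do the entropy inequality and the Glauber log-Sobolev inequality give \eqref{Ln* 1}. There $\lambda_c$ is chosen so the coefficient of $\int\Gamma^{\rm r}_n(\sqrt f)\,d\nu^n_{\rho_*}$ is at most $1/2$, and the $O(1)$ remainder vanishes after division by $a_n^2$.

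So your remark that the quadratic terms must be handled ``by the entropy bound rather than the Dirichlet form'' is the wrong way round: both are needed, Dirichlet form first. Quoting \eqref{lambda c condition} does not replace the argument, since its constants come from exactly this replacement.

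\textbf{Steps 2--3.} With Step 1 repaired, your path-level route is an acceptable substitute for the paper's finite-dimensional bounds, exponential tightness and \cite[Theorem 4.28]{feng2006large}. That route is contraction plus Dembo--Zeitouni exponentially good approximations, with the limiting rate identified via Gaussian concentration. Note one extra requirement: identifying the limiting rate with $\mathcal I_T$ this way needs $\E\sup_{t\le T}|Z^\varepsilon_t-Z_t|^2\to0$ on a common probability space. That is more than the convergence in distribution in Proposition \ref{prop occup time invar}(i), whereas the paper only uses $\alpha^\varepsilon(t_i,t_j)\to\alpha(t_i,t_j)$.
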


The above result can also be extended to degree-one local functions.

\begin{corollary}\label{cor mdp degree one}
Assume that the local function $f: \Omega_{n}^1 \rightarrow \R$ satisfies \eqref{degree one}.  Then, the sequence  $\{ \Gamma^n_f (t)/a_n, 0 \leq t \leq T\}$ satisfies the moderate deviation principles with decay rate $a_n^2$ and with rate function $\mc{I}_T / \varphi_f^\prime (\rho_*)$ in the space $C([0,T], \R)$ as $n \rightarrow \infty$. 
\end{corollary}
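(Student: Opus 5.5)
The plan is to reduce Corollary \ref{cor mdp degree one} to Theorem \ref{thm occu mdp}, or more precisely to the intermediate statement behind it, by exponential equivalence. The key step is a superexponential replacement of $f$ by its linearization,
\[
f(\eta)-\varphi_f(\rho_*) \approx \varphi_f'(\rho_*)\,\bar\eta_0 .
\]
In the proof of Theorem \ref{thm occu mdp}, Lemma \ref{lem superexponential replacement} shows that $\Gamma^n(t)/a_n$ is superexponentially close, at speed $a_n^2$, to $a_n^{-1}\int_0^t \mathcal{Y}^n_s(J_\varepsilon)\,ds$ as $n\to\infty$ and then $\varepsilon\to0$. I would prove the analogous statement for $\Gamma^n_f$, namely
\[
\lim_{\varepsilon\to0}\limsup_{n\to\infty}\frac{1}{a_n^2}\log \P^n_{\rho_*}\Big(\sup_{0\le t\le T}\Big|\frac{\Gamma^n_f(t)}{a_n}-\frac{\varphi_f'(\rho_*)}{a_n}\int_0^t\mathcal{Y}^n_s(J_\varepsilon)\,ds\Big|>\delta\Big)=-\infty
\]
for every $\delta>0$. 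Given this, the MDP for the fluctuation field from \cite{zhao2026moderate}, the contraction principle and the approximation argument already used for Theorem \ref{thm occu mdp} give the MDP with the rate function of $\varphi_f'(\rho_*)Z$. That rate function is $\mathcal I_T(\gamma/\varphi_f'(\rho_*))$, which is the intended meaning of ``$\mathcal I_T/\varphi_f'(\rho_*)$''. Alternatively, once the replacement $\Gamma^n_f\approx\varphi_f'(\rho_*)\Gamma^n$ is established, the corollary follows directly from Theorem \ref{thm occu mdp} and the contraction principle applied to the linear map $\gamma\mapsto\varphi_f'(\rho_*)\gamma$.

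To prove the replacement, write
\[
f-\varphi_f(\rho_*)-\varphi_f'(\rho_*)\bar\eta_0=g,
\]
where $g$ is a local function with $E_{\nu_{\rho_*}}[g]=0$ and $\frac{d}{d\rho}E_{\nu_\rho}[g]\big|_{\rho_*}=0$, so $g$ is of degree at least two. By translation invariance it suffices to treat this $g$. I would split $g$ into two pieces.

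\emph{Reduction to a spatial average.} First show that $\int_0^t g(\tau_0\eta_s)\,ds$ can be replaced by $\int_0^t \frac{1}{\ell}\sum_{|x|\le\ell} g(\tau_x\eta_s)\,ds$ for a mesoscopic block size $\ell$. This is the one-block/two-block estimate at moderate deviation scale. I would write $g(\tau_0\eta)-g(\tau_x\eta)$ as a telescoping sum of gradients along bonds, and use the Feynman--Kac formula together with the variational formula for the largest eigenvalue. The variational formula gives a bound of the form
\[
\sup_\phi\Big\{a_n\beta_{1,n}\langle V,\phi\rangle-n^2 D_n(\sqrt{\phi})\Big\}+\frac{1}{T}H_n(\mu_t\,|\,\nu_{\rho_*}).
\]
Because $\nu_{\rho_*}$ is not stationary, a correction term $\langle L_n^{\rm r*}\mathbf 1,\cdot\rangle$ appears. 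This is exactly where the logarithmic Sobolev inequality for the Glauber part, as in Lemma \ref{lem superexponential replacement}, will be used to control that term.

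\emph{Second-order remainder.} Second, the block average of a degree $\ge2$ function is close to a quadratic function of $\bar\eta^\ell=\ell^{-1}\sum_{|x|\le\ell}\bar\eta_x$, of size $(\bar\eta^\ell)^2$, plus a centered error of order $\ell^{-1}$. I then need to show that
\[
\frac{\sqrt n}{a_n}\int_0^t (\bar\eta^\ell_s)^2\,ds
\]
is superexponentially small. For this I would exponentiate at speed $a_n^2$ and use the entropy inequality with respect to $\nu_{\rho_*}$. Under the product measure, $(\bar\eta^\ell)^2$ has exponential moments at scale $\ell$, so the required bound holds provided $\ell$ is chosen with $\sqrt n\, a_n\ll\ell$ and $\ell\ll n$. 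Since $a_n\ll\sqrt n$, such an $\ell$ exists.

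The main obstacle is the first step, the one-block replacement in the nonequilibrium setting at speed $a_n^2$, since it requires the Dirichlet form to absorb the non-stationarity correction. I expect it to follow from the same log-Sobolev/entropy-production argument already used in Lemma \ref{lem superexponential replacement}, with $\bar\eta_0$ replaced by the local function $g$. The change is that the telescoping now runs over the support of $g$, which only changes constants.
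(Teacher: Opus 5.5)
There is a genuine gap in your second step. You claim that the block average $W_\ell=\frac{1}{2\ell+1}\sum_{|x|\le\ell}g(\tau_x\eta)$ of a degree-$\ge 2$ function equals a quadratic function of $\bar\eta^\ell$ up to a negligible error, and this is not justified.

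Take $g=\bar\eta_0\bar\eta_1$. There is no deterministic bound on $W_\ell-(\bar\eta^\ell)^2$: on an alternating configuration it equals $-1/4$. Under $\nu^n_{\rho_*}$ the difference has typical size $\ell^{-1/2}$, not $\ell^{-1}$.

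Suppose you try to discard it the way you discard $(\bar\eta^\ell)^2$, via Feynman--Kac, the entropy inequality, and \eqref{log sob ineq} to absorb $H(f|\nu^n_{\rho_*})$. Absorption forces $\gamma\gtrsim a_n\sqrt n$. Since $\log E_{\nu^n_{\rho_*}}[e^{\gamma W_\ell}]\approx C\gamma^2/\ell$, the leftover is of order $M^2 n/\ell$ after dividing by $a_n^2$, and this is useless for $\ell\ll n$.

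So passing from the block average of $g$ to $(\bar\eta^\ell)^2$ is a genuine one-block (second-order Boltzmann--Gibbs) estimate at moderate-deviation speed. It must exploit the exclusion Dirichlet form, and nothing in your plan supplies it. Conversely, your Step 1, which you flag as the main obstacle, is routine: it is the same gradient estimate as Lemma \ref{lem superexponential replacement}. It also does not help, because averaging $g$ over translates leaves the fluctuations of the product at order $\ell^{-1/2}$.

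The missing idea is how the paper avoids any one-block estimate.
\begin{enumerate}[(i)]
\item Expand into monomials $\prod_{x\in A}\bar\eta_x$ with $|A|\ge 2$. Any leftover degree-one gradient terms $\bar\eta_x-\bar\eta_0$ are handled as in Lemma \ref{lem superexponential replacement}.
\item For each monomial with $x_1<\dots<x_m$, replace only the extreme factors: $\bar\eta_{x_1}$ by the left average $\overleftarrow{\eta}^{\varepsilon n}_{x_1}$, and $\bar\eta_{x_m}$ by the right average $\overrightarrow{\eta}^{\varepsilon n}_{x_m}$.
\item Each replacement is a linear gradient estimate exactly as in Lemma \ref{lem superexponential replacement}. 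The remaining factors do not depend on the sites swept by the exchanges, so the change of variables $\eta\mapsto\eta^{y,y+1}$ still works, and the cost vanishes as $\varepsilon\to 0$.
\item What remains is a product of two independent averages, each sub-Gaussian with variance $(\varepsilon n)^{-1}$, times a bounded factor. It therefore has exponential moments at scale $\varepsilon n$.
\item Your $(\bar\eta^\ell)^2$ computation now applies verbatim (entropy inequality, \eqref{log sob ineq}, \eqref{Ln* 1}), since $a_n\sqrt n\ll\varepsilon n$.
\end{enumerate}
With this replacing your Step 2, the rest of your plan goes through and coincides with the paper's proof. That is, you get exponential equivalence of $\Gamma^n_f$ with $\varphi_f'(\rho_*)\Gamma^n$, then apply Theorem \ref{thm occu mdp} and contraction. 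Your reading of the rate function as $\mathcal{I}_T(\gamma/\varphi_f'(\rho_*))$ is the correct one.
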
 

\section{Preliminaries}\label{sec: preliminary}

In this section, we present several previous results that will be used throughout the proof.

\subsection{Random walk}\label{subsec: rw} Let $p_n(t) := \{p_n(t,x), x \in \T_n^d\}$ be the transition probability of the accelerated continuous time simple random walk on $\T_n^d$, which jumps to one of its neighbors at rate $n^2$. Similarly, let $\mathfrak{p} (t) := \{\mathfrak{p} (t,x), x \in \Z^d\}$ be the transition probability of the continuous time simple random walk on $\Z^d$, which jumps to one of its neighbors at rate one. Then,
\begin{equation*}
	\partial_t p_n (t,x) = n^2 \Delta_{\rm D} p_n (t,x), \quad p_n (0,x) = \delta_0 (x).
\end{equation*} 
In this formula, $\Delta_{\rm D}$ is the discrete Laplacian, \[\Delta_{\rm D} p (x) = \sum_{i=1}^d \big(p(x+e_i) + p(x-e_i) - 2 p(x)\big),\] and $\delta_0$ is the Delta function at the origin. Define
\begin{equation}\label{gnx}
g_n(x) = \int_0^\infty e^{-t} p_n (t,x) dt, \quad x \in \T_n^d.
\end{equation}
Then,
\begin{equation}\label{gn p1}
	(1-n^2 \Delta_{\rm D}) g_n (x) = \delta_0 (x).
\end{equation}

The next lemma states some  estimates for the function $g_n$, see \cite{quastel2002central} for example.

\begin{lemma}\label{lem: rw}
\begin{itemize} 
	\item[(i)] There exists some constant $C$ independent of $n$ such that
	\[  \sum_{x \in \T_n^d} g_n(x)^2 \leq C \begin{cases}
		n^{-d}  \quad&\text{if}\quad d\leq 3;\\ 
		n^{-4} \log n \quad&\text{if}\quad d = 4;\\
		n^{-4}  \quad&\text{if}\quad d \geq  5.\end{cases}\] 
		
	\item[(ii)]  
	\[  \lim_{n \rightarrow +\infty} n^2 \beta_{d,n}^2 \sum_{x,y \in \T_n^d \atop |x-y|=1} \big(g_n (x) - g_n (y)\big)^2 = \begin{cases}
	(2\pi)^{-1}  \quad&\text{if}\quad d=2;\\ 
\int_0^\infty \mathfrak{p} (t,0) dt \quad&\text{if}\quad d\geq 3.\end{cases}\]
The same limit holds with the left hand side replaced by $\beta_{d,n}^2 g_n (0)$.
\end{itemize}
\end{lemma}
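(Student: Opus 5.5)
The plan is to work in Fourier space on $\T_n^d$, since $g_n$ is given explicitly by the resolvent identity \eqref{gn p1}. With $\hat g_n(k)=\sum_x g_n(x)e^{-2\pi i k\cdot x/n}$ for $k\in\T_n^d$, \eqref{gn p1} gives
\[
\hat g_n(k)=\frac{1}{1+n^2\omega(k)},\qquad \omega(k)=\sum_{i=1}^d 4\sin^2(\pi k_i/n).
\]
By Parseval, $\sum_x g_n(x)^2=n^{-d}\sum_k (1+n^2\omega(k))^{-2}$. Similarly, $\sum_{|x-y|=1}(g_n(x)-g_n(y))^2=\langle g_n,-\Delta_{\rm D}g_n\rangle=n^{-d}\sum_k\omega(k)(1+n^2\omega(k))^{-2}$, and $g_n(0)=n^{-d}\sum_k(1+n^2\omega(k))^{-1}$. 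Everything therefore reduces to estimating lattice sums of rational functions of $\omega$. Throughout, the basic input is the two-sided bound $c|k|^2/n^2\le\omega(k)\le C|k|^2/n^2$ for representatives $k\in\{-\lfloor n/2\rfloor,\dots\}^d$.

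For (i), the bound gives $\sum_k(1+n^2\omega(k))^{-2}\le C\sum_{|k|\le n}(1+|k|^2)^{-2}$ up to scaling. Note that $n^2\omega(k)\asymp |k|^2$, so this sum is $\asymp\sum_{|k|\lesssim n}(1+|k|^2)^{-2}$. The sum is bounded for $d\le3$, of order $\log n$ for $d=4$, and of order $n^{d-4}$ for $d\ge5$. Multiplying by $n^{-d}$ gives exactly the three cases in (i).

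For (ii), I use the identity $\omega(1+n^2\omega)^{-2}=n^{-2}\big[(1+n^2\omega)^{-1}-(1+n^2\omega)^{-2}\big]$. This gives
\[
n^2\sum_{|x-y|=1}(g_n(x)-g_n(y))^2=g_n(0)-\sum_x g_n(x)^2 .
\]
By (i) and the choice of $\beta_{d,n}$, the term $\beta_{d,n}^2\sum_x g_n(x)^2$ vanishes in the limit:
\begin{itemize}
\item for $d=2$ it is $O\big(n^2(\log n)^{-1}n^{-2}\big)=O(1/\log n)$;
\item for $d=3$ it is $O(n^2n^{-3})$;
\item for $d=4$ it is $O(n^{-2}\log n)$;
\item for $d\ge5$ it is $O(n^{-2})$.
\end{itemize}
So both claims in (ii) reduce to computing $\lim\beta_{d,n}^2g_n(0)$.

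For $d\ge3$, write $n^{2}g_n(0)=\int_0^\infty e^{-t}n^2p_n(t,0)\,dt$ and substitute $s=n^2t$. This gives $n^2g_n(0)=\int_0^\infty e^{-s/n^2}p_n(s/n^2,0)\,ds$. Here $p_n(s/n^2,0)$ is the return probability at time $s$ of a rate-one walk on the torus of size $n$, which equals $\sum_{z\in\Z^d}\mathfrak p(s,nz)$. The $z=0$ term converges to $\int_0^\infty\mathfrak p(s,0)\,ds<\infty$ by transience. The remaining terms, weighted by $e^{-s/n^2}$, contribute $O(n^{2-d})\to0$; this follows because they are bounded by $C n^{-d}$ for $s\gtrsim n^2$ and are exponentially small for $s\ll n^2$.

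For $d=2$ the same representation gives $n^2g_n(0)=\int_0^\infty e^{-s/n^2}\sum_z\mathfrak p(s,nz)\,ds$, and I split at $s=n^2$. For $s\le n^2$, the local CLT $\mathfrak p(s,0)\sim(4\pi s)^{-1}$ gives $\frac{1}{4\pi}\log n^2+O(1)=\frac{1}{2\pi}\log n+O(1)$. The winding terms on $[0,n^2]$ are $O(1)$. For $s\ge n^2$, the torus sum is $O(n^{-2})$ and is integrated against $e^{-s/n^2}$, which again gives $O(1)$. Dividing by $\log n$ yields the limit $(2\pi)^{-1}$, matching the claim.

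The main obstacle is the $d=2$ asymptotics: one has to pin down the constant $1/(4\pi)$ in the local CLT for the walk with generator $\Delta_{\rm D}$. This walk has total jump rate $2d=4$, so $\mathfrak p(s,0)\approx (2\pi\cdot 2s)^{-1}$. It also requires controlling the torus-winding and cutoff errors uniformly to $O(1)$. If the normalization of "rate one" differs, the constant adjusts correspondingly, and I would double-check it against the Fourier integral $\frac{1}{(2\pi)^2}\int_{[-\pi,\pi]^2}\frac{d\theta}{n^{-2}+\sum 4\sin^2(\theta_i/2)}\sim\frac{1}{2\pi}\log n$, which gives the same constant directly.
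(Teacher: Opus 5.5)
Your proof is correct. It also goes further than the paper: the paper gives no proof of this lemma and only refers to \cite{quastel2002central} for these resolvent estimates.

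The most useful step is the identity $n^2\sum_{|x-y|=1}(g_n(x)-g_n(y))^2=g_n(0)-\sum_x g_n(x)^2$. It needs no Fourier analysis: pair \eqref{gn p1} with $g_n$ and sum by parts. Combined with (i), it reduces the bond sum in (ii) to $\beta_{d,n}^2 g_n(0)$. This is what justifies the paper's unproved remark that the two expressions in (ii) have the same limit.

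Your remaining steps are sound:
\begin{itemize}
\item the Parseval bound with $n^2\omega(k)\asymp|k|^2$ gives (i);
\item lifting the torus walk to $\Z^d$ and using transience gives the limit $\int_0^\infty\mathfrak p(s,0)\,ds$ for $d\ge 3$;
\item in $d=2$, the local limit $\mathfrak p(s,0)\sim(4\pi s)^{-1}$ gives $n^2g_n(0)=\frac{1}{2\pi}\log n+O(1)$, which matches your Fourier cross-check.
\end{itemize}

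To make the sketch complete, state the standard inputs you use implicitly:
\begin{itemize}
\item a Gaussian upper bound $\mathfrak p(s,x)\le Cs^{-d/2}e^{-c|x|^2/s}$ for $|x|\le s$, with a faster decay beyond. This makes the winding terms $O(n^{2-d})$ (resp.\ $O(1)$ in $d=2$) on $s\le\epsilon n^2$.
\item the uniform torus bound $p_n(s/n^2,0)\le C(n^{-d}+s^{-d/2})$, which handles $s\ge\epsilon n^2$. This includes the window $s\asymp n^2$, where the winding terms are not exponentially small.
\item the trivial bound $\mathfrak p\le 1$ on $[0,1]$, where $(4\pi s)^{-1}$ is not integrable.
\item $1-e^{-s/n^2}\le s/n^2$, so that the damping factor costs only $O(1)$ on $[1,n^2]$.
\end{itemize}

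You also correctly take $\mathfrak p$ to be the walk with generator $\Delta_{\rm D}$, i.e.\ rate one to each neighbour, consistent with $p_n$. This is the normalisation under which the stated constants hold. If $\mathfrak p$ had total jump rate one, the $d\ge 3$ right-hand side would be too large by a factor $2d$.

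In short, the citation gives the paper brevity. Your argument is self-contained and makes both the reduction to $g_n(0)$ and the normalisation of $\mathfrak p$ explicit.
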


\subsection{Correlation estimate} The following two-point correlation estimate for the process was proved by De Masi, Ferrari and Lebowitz in \cite[Lemma 3.5]{de1986reaction}.

\begin{lemma}\label{lem: correlation}
There exists some constant $C$ independent of $n$ such that
\[\sup_{0 \leq s \leq T} \sup_{x,y \in \T_n^d \atop x \neq y}  \Big| \mathrm{Cov} (\eta_x (s),\eta_y (s))  \Big| \leq C n^{-d},\]
where the covariance  is with respect to the measure $\P^n_{\rho_*}$.
\end{lemma}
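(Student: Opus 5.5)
The plan is to follow the duality and correlation-function approach of De Masi, Ferrari and Lebowitz: write down a closed linear equation for the two-point correlation function, with a small source term, and bound its solution through the heat kernel $p_n$.

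\textbf{Step 1: the one-point function.} Set $\rho_x(t)=\E^n_{\rho_*}[\eta_x(t)]$ and
\[
\varphi_{x,y}(t)=\E^n_{\rho_*}[\bar\eta_x(t)\bar\eta_y(t)] \quad\text{for } x\neq y,
\]
where here $\bar\eta_x(t)=\eta_x(t)-\rho_x(t)$. By translation invariance of $\nu^n_{\rho_*}$ and of the dynamics, $\rho_x(t)=\rho(t)$ does not depend on $x$. Applying $L_n$ to $\eta_x$ gives
\[
\frac{d}{dt}\rho(t)=E[c_x(\eta)(1-2\eta_x)] = a(1-\rho)-b\rho+\frac{\lambda}{2d}\sum_{|y-x|=1}E[\eta_y(1-\eta_x)].
\]
Here $E[\eta_y(1-\eta_x)]=\rho(1-\rho)-\varphi_{x,y}$, so $\rho'(t)=F(\rho(t))-\lambda\,\bar\varphi(t)$, where $\bar\varphi$ is the nearest-neighbour correlation. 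We have $\rho(0)=\rho_*$ and $\varphi(0)=0$. Once $\varphi=O(n^{-d})$ is known, Gronwall gives $|\rho(t)-\rho_*|\le Cn^{-d}$. The two bounds should therefore be proved together by a bootstrap argument.

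\textbf{Step 2: the correlation equation.} Compute $\frac{d}{dt}\varphi_{x,y}$ for $x\neq y$. The exclusion part gives $n^2\Delta^{(2)}\varphi$, the discrete Laplacian of the two-particle walk on pairs $x\neq y$. When $x$ and $y$ are neighbours there is an extra boundary term of size $n^2(\rho(1-\rho)$-type differences$)$. For the SSEP started from a product measure this term cancels exactly, since $E[(\eta_x-\eta_y)^2]$ terms appear symmetrically. The precise form is $-n^2\mathbf 1_{|x-y|=1}\,E[(\eta_x-\eta_y)^2]$ combined appropriately, i.e. $-n^2\mathbf 1_{|x-y|=1}(\partial\rho)^2=0$ when $\rho$ is spatially constant. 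The Glauber part contributes three kinds of terms:
\begin{itemize}
\item[(a)] a linear damping $-(2b+\text{const})\varphi_{x,y}$ together with $\lambda$-weighted neighbouring correlations $\varphi_{x',y}$ and $\varphi_{x,y'}$;
\item[(b)] three-point correlations $E[\bar\eta_x\bar\eta_{x'}\bar\eta_y]$, each multiplied by $\lambda/(2d)$;
\item[(c)] a source term, nonzero only when $|x-y|=1$, coming from the factor $\eta_y$ inside $c_x$. It equals $\frac{\lambda}{2d}E[\bar\eta_y\,\eta_y(1-2\eta_x)\cdots]$, which is $O(1)$.
\end{itemize}

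\textbf{Step 3: Duhamel and the heat kernel.} Solve by Duhamel's formula using the semigroup of the two-particle stirring walk, accelerated by $n^2$. The source in (c) sits only on the nearest-neighbour diagonal pairs, of which there are $O(n^d)$. Its contribution to $\varphi_{x,y}(t)$ is bounded by
\[
\int_0^t \sum_{z} p^{(2)}_n\big(s,(x,y),(z,z\pm e)\big)\,ds .
\]
The two-particle walk, viewed through the difference $x-y$, is a random walk at rate $2n^2$ on $\T_n^d$. The time it spends adjacent to the origin up to time $T$ is of order $n^{-2}\cdot n^{2-d}=n^{-d}$ uniformly in the starting point. This holds because the local time at a fixed site of the rescaled walk is $\int_0^T p_n(s,\cdot)\,ds\le C n^{-d}$, obtained by splitting at $s\le n^{-2}$ and $s>n^{-2}$ and using $p_n(s,x)\le C(n^2s)^{-d/2}\wedge 1+Cn^{-d}$.

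\textbf{Step 4: closing the estimate.} The linear terms in (a) are handled by Gronwall in the sup norm $\sup_{x\neq y}|\varphi_{x,y}|$. The main obstacle is (b): the three-point functions do not close the hierarchy. I would first try to bound them by the same scheme: write the analogous equation for $k$-point correlations and show inductively that they are $O(n^{-d})$ in sup norm. Each Glauber flip couples $k$-point correlations to $(k+1)$-point ones with a bounded coefficient $|\lambda|$, so summing the hierarchy yields a series like $\sum_k(C|\lambda|T)^k$, and the smallness $\lambda\in[-\lambda_c,\lambda_c]$ makes it converge. This is the De Masi, Ferrari and Lebowitz argument. It can alternatively be phrased through their stochastic duality with branching random walks, in which the factor $n^{-d}$ comes from the probability that two dual walkers meet, as computed in Step 3. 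With uniform bounds on all correlation orders, Gronwall closes the argument and gives the asserted $Cn^{-d}$ bound uniformly in $s\le T$ and $x\neq y$.
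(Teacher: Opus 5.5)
Your plan is essentially the De Masi--Ferrari--Lebowitz correlation-equation method. The paper gives no proof of its own here; it only quotes that reference. There is a genuine gap, and it is quantitative: it sits in Step~3. The bound $\int_0^T p_n(s,\cdot)\,ds\le Cn^{-d}$ is false for $d\ge 2$.

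Use your own splitting. When the walk starts at or next to the site, the range $s\le n^{-2}$ alone contributes about $n^{-2}$. The remaining piece $\int_{n^{-2}}^T (n^2s)^{-d/2}\,ds$ is of order $n^{-2}\log n$ for $d=2$ and $n^{-2}$ for $d\ge 3$. Altogether the local time is of order $n^{-2}g_d(n)$, i.e.\ $n^{-1}$, $n^{-2}\log n$, $n^{-2}$ for $d=1$, $d=2$, $d\ge3$. This is the same quantity as $g_n(0)\asymp\beta_{d,n}^{-2}$ in Lemma~\ref{lem: rw}(ii).

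Your heuristic $n^{-2}\cdot n^{2-d}$ only counts visits made after the walk has mixed on the torus. It misses the $O(1)$ early returns (for $d\ge 3$) or $O(\log n)$ early returns (for $d=2$). Since your supremum includes nearest-neighbour pairs, which start on the source, these early returns dominate. So your Duhamel argument, even with Step~4 completed, only yields $\sup_{x\neq y}|\mathrm{Cov}(\eta_x(s),\eta_y(s))|\le Cn^{-2}g_d(n)$. That equals $Cn^{-d}$ only when $d=1$.

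A sharper closing of the hierarchy cannot repair this. The source in (c) is exactly $\lambda\chi(m_s)(1-m_s)/d$ on every nearest-neighbour pair, where $m_s=\E^n_{\rho_*}[\eta_0(s)]$. It is nonzero for $\lambda\neq 0$ and has fixed sign. The other Glauber terms are a bounded linear operator plus three-point terms carrying an extra factor $\lambda$.

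Now read the same Duhamel formula from below at the pair $(0,e_1)$, for small $\lambda\neq0$ and a fixed small time $s_0$. The competing terms are only $O(s_0|\lambda|n^{-2}g_d(n))$. This gives $|\mathrm{Cov}(\eta_0(s_0),\eta_{e_1}(s_0))|\ge c|\lambda|n^{-2}g_d(n)$, which is much larger than $n^{-d}$ when $d\ge 2$. So for $d\ge2$ the true uniform order is $n^{-2}g_d(n)$, and the stated $Cn^{-d}$ bound can only be established in $d=1$.

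This matters for $d=2$, where the lemma is used: there $\beta_{2,n}^2\cdot n^{-2}\log n=1$ does not vanish. One would need a covariance bound that decays in $|x-y|$, not a uniform one.

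A smaller point concerns Step~4. The coupling between $k$-point and $(k+1)$-point functions has size of order $k|\lambda|$, not $|\lambda|$. The iterated series therefore behaves like $(1-C|\lambda|t)^{-k}$, and you need $|\lambda|T$ small or a restart argument. The branching-duality formulation avoids this issue.
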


\subsection{Flow lemma} Let $q, q^\prime$ be two probability measures on $\T_n^d$. We say that $\phi : \T_n^d \times \T_n^d \rightarrow \R$ is a flow connecting $q$ and $q^\prime$ if it satisfies the following conditions: 
\begin{itemize}
	\item[(i)] $\phi  (x,y) = 0$ unless $|y-x|=1$;
	
	\item[(ii)] $\phi$ is anti-symmetric, \emph{i.e.}, $\phi  (x,y) = - \phi  (y,x)$ for any $x,y \in \T_n^d$;
	
	\item[(iii)] for any $x \in \T_n^d$,
	\[\sum_{y \in \T_n^d} \phi  (x,y) = q (x) - q^\prime (x).\]
\end{itemize}

For  any $x \in \T_n^d$ and any integer $\ell < n$, let $\Lambda_{x,\ell}$ be the cube of vertex $x$ with length $\ell$, 
\[\Lambda_{x,\ell} := \{y \in \T_n^d: 0 \leq y_i - x_i \leq \ell -1, \; \forall 1 \leq i \leq d\}.\]
Let $p_\ell$ be the uniform measure on $\Lambda_\ell:=\Lambda_{0,\ell}$, that is, $p_\ell (y) = \ell^{-d} \mathbf{1} \{y \in \Lambda_{\ell}\}$ for $y \in \T_n^d$. 
For $\ell < n/2$, define $q_\ell$ to be the convolution of $p_\ell$ with itself,
\[q_\ell (x) = p_\ell * p_\ell (x) = \sum_{y \in \T_n^d} p_\ell (y) p_\ell (x-y), \quad x \in \T_n^d.\]

The following result was proved by Jara and Menezes \cite{jara2018non}.

\begin{lemma}\label{lem flow}
There exists a flow $\phi_\ell$ connecting $\delta_0$ and $q_\ell$ such that
\begin{itemize}
	
	\item[(i)] $\phi_\ell (x,y) = 0$ if $x \notin \Lambda_{2\ell-1}$ or $y \notin \Lambda_{2\ell-1}$;

	\item[(ii)] there exists some constant $C_0 = C_0 (d)$ such that
	\[\sum_{x,y \in \T_n^d \atop |x-y|=1} \phi_\ell (x,y)^2 \leq C_0 g_d (\ell),\]
	where
	\begin{equation}
		g_d (\ell) = \begin{cases}
			\ell, \quad&\text{if}\quad d=1,\\
			\log	\ell, \quad&\text{if}\quad d=2,\\
			1, \quad&\text{if}\quad d \geq 3.
		\end{cases}
	\end{equation}
\end{itemize}
\end{lemma}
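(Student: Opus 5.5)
We construct the flow explicitly by superposing elementary flows along coordinate paths. The plan has three steps.

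\emph{Step 1: a flow from $\delta_0$ to $p_\ell$.} For every $z\in\Lambda_\ell$ choose the canonical lattice path $\gamma_z$ from $0$ to $z$. It first increases the first coordinate from $0$ to $z_1$, then the second coordinate, and so on; each step is $+e_i$. Let $\psi_z$ be the unit flow along $\gamma_z$: it puts $+1$ on each oriented edge of the path and $-1$ on the reversed edge. Then $\sum_y \psi_z(x,y)=\delta_0(x)-\delta_z(x)$. Set $\psi_\ell=\ell^{-d}\sum_{z\in\Lambda_\ell}\psi_z$. This is antisymmetric and nearest-neighbour, it has divergence $\delta_0-p_\ell$, and it is supported in $\Lambda_\ell$.

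\emph{Step 2: from $p_\ell$ to $q_\ell$.} Translate $\psi_\ell$ by $y$ to get a flow from $\delta_y$ to $p_\ell(\cdot-y)$, and average over $y$ with weights $p_\ell(y)$. The resulting flow $\psi'_\ell$ connects $p_\ell$ and $p_\ell*p_\ell=q_\ell$, and it is supported in $\Lambda_{2\ell-1}$. Put $\phi_\ell=\psi_\ell+\psi'_\ell$. Divergence is linear, so $\phi_\ell$ connects $\delta_0$ and $q_\ell$, and property (i) holds. Here we use $2\ell-1<n$, so there is no wrap-around on the torus.

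\emph{Step 3: energy bound, which is the main point.} Since $(u+v)^2\le 2u^2+2v^2$, it suffices to bound the energy of each piece. For $\psi'_\ell$, convexity (Jensen over the average in $y$) together with translation invariance gives energy $\le$ energy of $\psi_\ell$. So everything reduces to estimating $\sum\psi_\ell(x,x+e_i)^2$.

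Fix an edge $(x,x+e_i)$ with $|x|_\infty=r$. It lies on $\gamma_z$ only if:
\begin{itemize}
\item[(a)] $z_j=x_j$ for $j<i$;
\item[(b)] $z_i>x_i$;
\item[(c)] $x_j=0$ for $j>i$.
\end{itemize}
The free coordinates are therefore $z_i$ and $z_j$ for $j>i$, which gives at most $\ell^{d-i+1}$ values of $z$. Hence $|\psi_\ell(x,x+e_i)|\le \ell^{1-i}$, and the only nonzero edges in direction $i$ are those with $x_j=0$ for $j>i$.

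This naive single-path-per-$z$ construction gives a correct bound in $d=1$ but fails in $d\geq 2$. In $d=1$ each edge carries at most flux $1$ and there are $\ell$ edges, so the energy is $\le\ell$. In $d\ge2$ the edges in direction $1$ along the axis carry flux $O(1)$; there are $\ell$ of them, so the energy is of order $\ell$. That is too much compared with $\log\ell$.

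The fix is to spread the mass over many paths, which I expect to be the main obstacle. I would build the flow multiscale, in the Jara--Menezes dyadic style. For $k=0,\dots,\log_2\ell$, pass from $p_{2^k}$ to $p_{2^{k+1}}$. Each point of the block $\Lambda_{2^k}$ sends its mass $2^{-kd}$ evenly to the $2^d$ points of $\Lambda_{2^{k+1}}$ that are translates by multiples of $2^k$ in each coordinate, using straight paths of length $\le 2^k$ in each coordinate. This spreads the flux uniformly. Each edge used at scale $k$ carries flux $\lesssim 2^{-kd}$. The number of edges used is $\lesssim 2^{kd}\cdot 2^k$, and each edge is used $O(1)$ times per scale by disjointness of the shifted paths. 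So the energy at scale $k$ is $\lesssim 2^{-2kd}\cdot 2^{k(d+1)}=2^{k(1-d)}$.

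Summing over $k\le\log_2\ell$ gives $\lesssim \ell$ for $d=1$, $\lesssim\log\ell$ for $d=2$, and $\lesssim 1$ for $d\ge3$, which is $C_0g_d(\ell)$. For general $\ell$, not a power of $2$, one uses the nearest dyadic scale and a final one-step adjustment to $p_\ell$ costing $O(\ell^{1-d})$. Combining this with Step 2 yields (ii), and the support stays in $\Lambda_{2\ell-1}$.
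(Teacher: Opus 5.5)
You have the right architecture, and the paper itself gives no proof (it imports the lemma from Jara--Menezes). Your Steps 1--2 are correct: build a flow $\delta_0\to p_\ell$ supported in $\Lambda_\ell$, then average its translates against $p_\ell$ to reach $q_\ell=p_\ell*p_\ell$, using convexity. Your diagnosis that the single-path flow has energy $\asymp\ell$ in every dimension is also correct.

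\textbf{First gap: the scale-$k$ count.} The multiscale energy estimate, which is the whole content of (ii), is wrong as written. At scale $k$ the straight paths are not disjoint. The paths $z\to z+2^k e_1$ starting from the points $z$ of a line in direction $e_1$ overlap, and an edge $(x,x+e_i)$ is crossed by up to $2^{d-1}2^k$ of them. Pigeonhole shows this must happen: you have $\sim 2^{k(d+1)}$ path--edge incidences, but $\Lambda_{2^{k+1}}$ contains only $O(2^{kd})$ edges. So the flux per edge is of order $2^{k(1-d)}$, not $2^{-kd}$, and the energy at scale $k$ is of order $2^{k(2-d)}$, not $2^{k(1-d)}$.

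No cleverer choice of paths avoids this. Half the mass of $p_{2^{k+1}}$ lies in $\{x_1\ge 2^k\}$ and must cross the hyperplane $x_1=2^k-\tfrac12$ through only $O(2^{k(d-1)})$ edges of $\Lambda_{2^{k+1}}$. Your own numbers signal the problem. $\sum_{k\le\log_2\ell}2^{k(1-d)}$ is $\log_2\ell$ in $d=1$, yet there the flow supported in $\Lambda_\ell$ is unique with energy $\asymp\ell$. In $d=2$ the sum is $O(1)$, which would beat the effective-resistance lower bound $\gtrsim\log\ell$. Likewise the non-dyadic adjustment costs $O(\ell^{2-d})$, not $O(\ell^{1-d})$; this is still harmless.

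\textbf{Second gap: adding energies across scales.} Energy is not additive over scales. With per-scale energies $E_k\asymp 2^{k(2-d)}$, the triangle inequality only gives $\big(\sum_k E_k^{1/2}\big)^2\asymp(\log\ell)^2$ in $d=2$, which is exactly the case the paper needs.

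The fix is to track a sup-norm bound per scale rather than an energy bound. The corrected count gives $\|\psi_{2^k}\|_\infty\le 2^{k(1-d)}$, with $\psi_{2^k}$ supported in $\Lambda_{2^{k+1}}$. Let $j(e)$ be the least $j$ such that both endpoints of $e$ lie in $\Lambda_{2^{j+1}}$. Only scales $k\ge j(e)$ contribute at $e$, so for $d\ge 2$
\[
|\phi(e)|\le\sum_{k\ge j(e)}2^{k(1-d)}\le 2\cdot 2^{j(e)(1-d)}.
\]
Summing over the $O(2^{jd})$ edges with $j(e)=j$ gives energy $\lesssim\sum_{j\le\log_2\ell}2^{j(2-d)}\lesssim g_d(\ell)$. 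For $d=1$, your explicit single-path flow already suffices. Absorb the one extra non-dyadic piece via $(u+v)^2\le 2u^2+2v^2$, and your Step 2 then completes the proof. As written, though, Step 3 does not establish (ii).
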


\subsection{Replacement estimate}\label{subsec replacement estimate}
Let $g: \T_n^d \rightarrow \R$ be a given function.  For any configuration $\eta \in \Omega_{n}^d$ and any $1 \leq i \leq d$, let us introduce
\[V_{i} (g,\eta) = \sum_{x \in \T_n^d} \bar{\eta}_x \bar{\eta}_{x+e_i} g(x).\]
For any $\ell < n/2$, define
\begin{equation}
\bar{\eta}_x^\ell = \sum_{y \in \T_n^d} q_\ell (y) \bar{\eta}_{x+y}, \quad V^\ell_{i} (g,\eta) = \sum_{x \in \T_n^d} \bar{\eta}_x \bar{\eta}^\ell_{x+e_i} g(x).
\end{equation}
By direct calculations, we can rewrite
\begin{equation}\label{V i ell}
	V^\ell_{i} (g,\eta) = \sum_{x \in \T_n^d} \overleftarrow{\eta}_x^{\ell} (g) \overrightarrow{\eta}^\ell_{x+e_i},
\end{equation}
where
\[\overleftarrow{\eta}_x^{\ell} (g) = \sum_{y \in \T_n^d} p_\ell (y) \bar{\eta}_{x-y} g(x-y), \quad \overrightarrow{\eta}^\ell_{x} = \sum_{y \in \T_n^d} p_\ell (y) \bar{\eta}_{x+y}.\]
 We also write $\overleftarrow{\eta}_x^{\ell} = \overleftarrow{\eta}_x^{\ell} (\mathbf{1})$, where $\mathbf{1}$ is the constant function equal to one. Let $\Gamma_n^{\rm ex}$ and $\Gamma_n^{\rm r}$ be the \emph{Carr{\' e} du champ} operators associated with the exclusion process and the Glauber dynamics respectively. Precisely, for functions $f,h: \Omega_{n}^d \rightarrow \R$,
 \begin{align*}
\Gamma_n^{\rm ex} (f,h) &= \frac{1}{2}\{L_n^{\rm ex} (fh) - f L_n^{\rm ex} h - h L_n^{\rm ex} f\}= \frac{1}{2}\sum_{x,y \in \T_n^d \atop |x-y|=1} [f(\eta^{x,y}) - f(\eta)] [h(\eta^{x,y}) - h(\eta)],\\
\Gamma_n^{\rm r} (f,h) &= \frac{1}{2}\{L_n^{\rm r} (fh) - f L_n^{\rm r} h - h L_n^{\rm r} f\} =\frac{1}{2} \sum_{x\in \T_n^d }c_x (\eta) [f(\eta^{x}) - f(\eta)] [h(\eta^{x}) - h(\eta)].
 \end{align*}
 We also write $\Gamma_n^{\rm ex} (f) = \Gamma_n^{\rm ex} (f,f)$ and similar notation for $\Gamma_n^{\rm r} $.

The following estimate when replacing $V_i$ by $V_i^\ell$ was proved in \cite[Equation (3.7)]{gonccalves2024clt}. 

\begin{lemma}\label{lem: replacement}
Let $f$ be a $\nu^n_{\rho_*}$-density. Then, for any $\gamma > 0$,
\begin{align*}
	\int (V_i (g,\eta) - V_i^\ell (g,\eta)) f (\eta)d \nu^n_{\rho_*} \leq \gamma \int \Gamma_n^{\rm ex} (\sqrt{f}) d \nu^n_{\rho_*} + \frac{1}{2 \gamma} \int \sum_{y,z \in \T_n^d \atop |y-z|=1} h_{y,z}^\ell (g, \eta)^2 f (\eta)d \nu^n_{\rho_*}.
\end{align*}
In this formula,
\[h_{y,z}^\ell (g, \eta) = \sum_{x \in \T_n^d} \phi_\ell (y-x,z-x) \bar{\eta}_{x-e_i} g(x-e_i)\]
with $\phi_\ell$ introduced in Lemma \ref{lem flow}.
\end{lemma}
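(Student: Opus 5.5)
The statement is a replacement estimate comparing $V_i(g,\eta)$ with $V_i^\ell(g,\eta)$. The natural approach is to write their difference as a telescoping sum of gradients along the flow $\phi_\ell$ of Lemma \ref{lem flow}. Then integrate by parts against the reversible measure $\nu^n_{\rho_*}$ for the exclusion part, and conclude with Cauchy--Schwarz. Only the exclusion part of the dynamics is used. This is legitimate because $\nu^n_{\rho_*}$ is reversible for $L_n^{\rm ex}$, and the bound involves only $\Gamma_n^{\rm ex}$.

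\emph{Step 1 (flow decomposition).} Start from
\[
V_i-V_i^\ell=\sum_x \bar\eta_x g(x)\,\big(\bar\eta_{x+e_i}-\bar\eta^\ell_{x+e_i}\big).
\]
Since $\phi_\ell$ connects $\delta_0$ and $q_\ell$, property (iii) of flows gives
\[
\bar\eta_{w}-\bar\eta^\ell_{w}=\sum_{y}(\delta_0(y)-q_\ell(y))\bar\eta_{w+y}=\sum_{y,z}\phi_\ell(y,z)\bar\eta_{w+y}.
\]
By antisymmetry this equals $\tfrac12\sum_{y,z}\phi_\ell(y,z)(\bar\eta_{w+y}-\bar\eta_{w+z})$, with the sum over neighbouring $y,z$. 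Set $w=x+e_i$, i.e. reindex with $x\mapsto x-e_i$, and translate the flow. The difference then becomes
\[
\sum_{\text{bonds } \{y,z\}} h^\ell_{y,z}(g,\eta)\,(\eta_y-\eta_z).
\]
Here $h^\ell_{y,z}$ is exactly the function in the statement, up to the ordering convention and the factor $1/2$ coming from unordered bonds. One must check that $h^\ell_{y,z}$ involves $\bar\eta_{x-e_i}$ at sites not in $\{y,z\}$, or at least that its dependence on $\eta_y,\eta_z$ is symmetric. This uses $\phi_\ell(y-x,z-x)=0$ unless $y-x,z-x\in\Lambda_{2\ell-1}$, while $x-e_i$ is outside that box relative to $x$. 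In other words, the translated flow is supported on the box of sites $\ge x$ in every coordinate, and $x-e_i$ is not in it. Therefore $h^\ell_{y,z}(\eta^{y,z})=h^\ell_{y,z}(\eta)$.

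\emph{Step 2 (integration by parts).} Fix a bond. Change variables $\eta\mapsto\eta^{y,z}$, which preserves $\nu^n_{\rho_*}$ and, by Step 1, leaves $h^\ell_{y,z}$ invariant. This gives
\[
\int h_{y,z}(\eta_y-\eta_z)f\,d\nu=-\tfrac12\int h_{y,z}(\eta_y-\eta_z)\big(f(\eta^{y,z})-f(\eta)\big)d\nu .
\]
Write $f(\eta^{y,z})-f(\eta)=(\sqrt f(\eta^{y,z})-\sqrt f(\eta))(\sqrt f(\eta^{y,z})+\sqrt f(\eta))$. Then apply the weighted Young inequality $ab\le \gamma a^2/2+b^2/(2\gamma)$, with $a$ the $\sqrt f$-gradient and $b=h_{y,z}(\eta_y-\eta_z)(\sqrt f(\eta^{y,z})+\sqrt f(\eta))/2$. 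Summing over bonds, the first term yields $\gamma\int\Gamma^{\rm ex}_n(\sqrt f)d\nu$ after matching the factor $1/2$ in $\Gamma_n^{\rm ex}$. For the second term, use $|\eta_y-\eta_z|\le1$, $(\sqrt{f(\eta^{y,z})}+\sqrt{f(\eta)})^2\le 2(f(\eta^{y,z})+f(\eta))$, and invariance of $h^2$ and of $\nu$ under the swap. This bounds it by $\frac{1}{2\gamma}\int\sum h_{y,z}^2 f\,d\nu$.

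\emph{Main obstacle.} The delicate point is the bookkeeping in Step 1. We need the exact identification of $h^\ell_{y,z}$, including the sign and translation conventions of $\phi_\ell(y-x,z-x)$. We also need to verify the invariance $h_{y,z}(\eta^{y,z})=h_{y,z}(\eta)$ from the support property (i) of Lemma \ref{lem flow}. This invariance is what makes the integration by parts clean. The constants $\gamma$ versus $1/(2\gamma)$ are then a matter of tracking the factors of $2$ from unordered bonds and the definition of $\Gamma_n^{\rm ex}$.
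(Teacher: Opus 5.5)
Your proposal is correct and is essentially the standard argument behind this estimate; the paper itself gives no proof and only cites \cite[Equation (3.7)]{gonccalves2024clt}. Over unordered bonds the flow identity gives exactly $V_i-V_i^\ell=\sum_{\{y,z\}}h^\ell_{y,z}(g,\eta)(\eta_y-\eta_z)$, with no stray factor $1/2$; property (i) of Lemma \ref{lem flow} together with $\ell<n/2$ (so $-e_i\notin\Lambda_{2\ell-1}$ even on the torus) makes $h^\ell_{y,z}$ independent of $\eta_y,\eta_z$, and the swap integration by parts plus Young's inequality then yields precisely the constants $\gamma$ and $1/(2\gamma)$.
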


\subsection{Relative entropy}  For any two probability measures $\mu, \nu$ on $\Omega_{n}^d$ such that $\mu \ll \nu$, recall that the relative entropy of $\mu$ with respect to $\nu$ is defined as 
\[H(\mu | \nu) = \int f \log f d \nu, \quad f = d \mu / d \nu.\]
Let $\mu^n_t$ be the distribution of the process at time $t$. The following relative entropy bound was first proved by Jara and Menezes \cite{jaram18nonequilireaction}, see also \cite{gonccalves2024clt}. 

\begin{lemma}\label{lem relative entropy}
There exists some constant $C$ such that for any $t > 0$,
\[H(\mu^n_t | \nu^n_{\rho_*}) \leq C n^{d-2} g_d (n).\]
\end{lemma}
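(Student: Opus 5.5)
The argument follows Yau's relative entropy method in the sharp form of Jara and Menezes \cite{jaram18nonequilireaction}. Let $f_t = d\mu^n_t / d\nu^n_{\rho_*}$ and $H_n(t) = H(\mu^n_t|\nu^n_{\rho_*})$; note $H_n(0)=0$. Yau's inequality gives
\[
H_n'(t) \le -2n^2 \int \Gamma_n^{\rm ex}(\sqrt{f_t})\, d\nu^n_{\rho_*} + \int L_n^{*}\mathbf{1}\, f_t\, d\nu^n_{\rho_*},
\]
where $L_n^*$ is the adjoint in $L^2(\nu^n_{\rho_*})$. The Glauber Dirichlet form is nonnegative and can simply be dropped. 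The goal is a differential inequality $H_n'(t)\le C H_n(t) + C n^{d-2} g_d(n)$, after which Gronwall's lemma closes the proof.

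\textbf{Step 1: compute $L_n^*\mathbf{1}$.} Since the exclusion part is reversible with respect to $\nu^n_{\rho_*}$, only the Glauber part contributes:
\[
L_n^*\mathbf{1}(\eta)=\sum_x \Big[c_x(\eta^x)\tfrac{\nu(\eta^x)}{\nu(\eta)}-c_x(\eta)\Big].
\]
The ratio $\nu(\eta^x)/\nu(\eta)$ equals $\rho_*/(1-\rho_*)$ or its inverse. Write everything in the centered variables $\bar\eta_x$ and use $\eta_x = \rho_* + \bar\eta_x$.
\begin{itemize}
\item The constant term vanishes, because $E_{\nu}[L_n^*\mathbf{1}]=0$.
\item The coefficient of the linear term $\sum_x\bar\eta_x$ is a multiple of $F(\rho_*)/\chi(\rho_*)$, which vanishes by the choice of $\rho_*$. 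This cancellation is the reason for taking $\nu^n_{\rho_*}$ as the reference measure, and I would verify it by direct computation.
\item What remains is a quadratic term of the form $c(\lambda,\rho_*)\sum_{i} V_i(\mathbf{1},\eta)$, a sum of products of neighbouring centered variables with bounded coefficient.
\end{itemize}

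\textbf{Step 2: replacement.} Apply Lemma \ref{lem: replacement} with $\gamma$ of order $n^2$, so that the gradient term is absorbed into $n^2\int\Gamma_n^{\rm ex}(\sqrt{f_t})\,d\nu$. The cost is
\[
\frac{C}{n^2}\int \sum_{|y-z|=1} h^\ell_{y,z}(\mathbf{1},\eta)^2 f_t\, d\nu .
\]
Each $h^\ell_{y,z}$ is a linear combination of independent centered Bernoulli variables whose coefficients, by Lemma \ref{lem flow}, have total square $\sum_{y,z}\sum_x \phi_\ell(y-x,z-x)^2 \le C n^d g_d(\ell)$. By the entropy inequality and the subgaussian bound for such sums, this error is at most $C H_n(t) + C n^{d-2} g_d(\ell)$, up to a constant times $H_n$.

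\textbf{Step 3: the averaged term.} Next bound $\int V_i^\ell(\mathbf{1},\eta) f_t\,d\nu$, using the representation \eqref{V i ell} as a sum of products of two box averages $\overleftarrow{\eta}^\ell_x\overrightarrow{\eta}^\ell_{x+e_i}$, each of variance $O(\ell^{-d})$. Split the torus into sublattices of spacing $2\ell$ so that the summands within one sublattice are independent under $\nu$. The entropy inequality together with a Hanson--Wright type concentration estimate gives
\[
\int V_i^\ell f_t\,d\nu \le C H_n(t) + C\, n^d \ell^{-d}.
\]
Getting a constant of order one in front of $H_n$ here, rather than a factor of order $\ell^d$, is the delicate point. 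If it fails, I would use the smallness of $\lambda\in[-\lambda_c,\lambda_c]$ to keep the prefactor $c(\lambda,\rho_*)$ small, which is the role of the condition on $\lambda_c$.

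\textbf{Step 4: choice of $\ell$ and conclusion.} Choose $\ell$ to balance $n^{d-2}g_d(\ell)$ against $n^d\ell^{-d}$:
\begin{itemize}
\item $d=1$: take $\ell\sim\sqrt n$, which gives error $O(n^{-1/2})\le C n^{-1}g_1(n)$.
\item $d=2$: take $\ell\sim n$ (more precisely $\ell=\epsilon n$), which gives $O(\log n)$.
\item $d\ge3$: $\ell\sim n$ gives $O(n^{d-2})$.
\end{itemize}
In each case the total error is bounded by $C n^{d-2}g_d(n)$. This yields the claimed differential inequality, and Gronwall's lemma with $H_n(0)=0$ gives the uniform bound on $[0,T]$.

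The main obstacle is Step 3: obtaining the concentration bound for the quadratic form $V^\ell_i$ with an order-one constant in front of the entropy, uniformly in $\ell$ up to order $n$. I would first try the sublattice decomposition combined with Hölder's inequality, and fall back on the smallness of $\lambda$ if that does not give a good enough constant.
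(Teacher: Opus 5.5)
Your architecture (Yau's inequality, reduction of $L_n^*\mathbf{1}$ to a multiple of $\sum_i V_i(\mathbf{1},\eta)$, flow replacement, box averages, optimization in $\ell$) is the Jara--Menezes argument the paper cites, and Step~1 is correct. The gap is the bound you claim in Step~2, and your choice of $\ell$ depends on it.

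You assert $\frac{C}{n^2}\int\sum_{|y-z|=1}h^\ell_{y,z}(\mathbf{1},\eta)^2 f_t\,d\nu^n_{\rho_*}\le CH_n(t)+Cn^{d-2}g_d(\ell)$ with $C$ independent of $\ell$. The additive term is only the $\nu^n_{\rho_*}$-mean. The coefficient of $H_n$ is $1/\gamma$, and the admissible $\gamma$ is limited by the top eigenvalue of the quadratic form $n^{-2}\sum_{y,z}h_{y,z}^2$. Lemma~\ref{lem flow} and Cauchy--Schwarz only bound that eigenvalue by $C\ell^d g_d(\ell)/n^2$. Each $h_{y,z}$ is subgaussian with variance $O(g_d(\ell))$, but the $h_{y,z}$ are dependent over distances of order $\ell$. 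Decoupling them by H\"older over $(2\ell+1)^d$ sublattices, which is the computation behind \eqref{h ell 1}, gives
\[
\frac{C\ell^d g_d(\ell)}{n^2}\big(H_n(t)+n^d\ell^{-d}\big).
\]
So, with the tools at hand, $\ell$ must satisfy $\ell^d g_d(\ell)\lesssim n^2$. That means $\ell\sim n$, $n/\sqrt{\log n}$, $n^{2/d}$ in $d=1$, $d=2$, $d\ge 3$ respectively. The term $n^d\ell^{-d}$ from Step~3 then equals $1$, $\log n$, $n^{d-2}$, and this is exactly where $n^{d-2}g_d(n)$ comes from.

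Balancing $n^{d-2}g_d(\ell)$ against $n^d\ell^{-d}$, as you do, is not the operative constraint. With $\ell\sim n$ in $d\ge 2$, the coefficient of $H_n$ is of order $\log n$ (resp.\ $n^{d-2}$). Gronwall then gives a bound growing polynomially (resp.\ exponentially) in $n$. In $d=1$ there is also an arithmetic slip: $\ell\sim\sqrt n$ gives $n\ell^{-1}=\sqrt n$, not $n^{-1/2}$. That exceeds the target $n^{-1}g_1(n)=1$; the right choice is $\ell\sim n$.

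By contrast, Step~3 is not delicate. Take $\gamma$ of order one. The factor $(2\ell+1)^d$ from H\"older is then exactly compensated, because $\overleftarrow{\eta}^\ell_x$ and $\overrightarrow{\eta}^\ell_{x+e_i}$ are independent and subgaussian with variance $O(\ell^{-d})$. Hence $E_{\nu^n_{\rho_*}}[\exp\{r\,\overleftarrow{\eta}^\ell_x\overrightarrow{\eta}^\ell_{x+e_i}\}]\le 3$ for $r\le c\,\ell^d$. This gives $C(H_n+n^d\ell^{-d})$ for every $\ell$, with no smallness of $\lambda$ needed (cf.\ \eqref{V i ell bound}).

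Finally, the statement is uniform in $t>0$, whereas Gronwall gives a constant depending on $T$. For the uniform bound, keep the Glauber Dirichlet form instead of dropping it. Since $L_n^*\mathbf{1}$ carries a factor $\lambda$, Steps~2--3 with the above $\ell$ bound $\int L_n^*\mathbf{1}f_t\,d\nu^n_{\rho_*}$ by part of the exclusion Dirichlet term plus $C(|\lambda|+\lambda^2)\big(H_n+n^{d-2}g_d(n)\big)$. The logarithmic Sobolev inequality \eqref{log sob ineq}, with $|\lambda|\le\lambda_c$ small, then yields $H_n'\le -cH_n+Cn^{d-2}g_d(n)$. This is the same mechanism as in \eqref{Ln* 1}, and it is where the smallness of $\lambda$ actually enters, not Step~3.
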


\section{Fluctuations}\label{sec: fluctuations}

In this section, we prove Theorem \ref{thm fluctuations in two dimension}. For a configuration $\eta \in \Omega_{n}^d$, define 
\[G_n (\eta) = \sum_{x \in \T_n^d} g_n (x) \bar{\eta}_x,\]
where $g_n (x)$ was defined in \eqref{gnx}. By Dynkin's martingale formula,
\[M_n (t) = \beta_{d,n} G_n (\eta (t)) - \beta_{d,n} G_n (\eta (0)) - \beta_{d,n} \int_0^t L_n G_n (\eta(s)) ds\]
is a martingale.  By direct calculations, 
\[	n^2 L_n^{\rm ex} G_n (\eta) = \sum_{x \in \T_n^d} g_n (x) n^2 \Delta_{\rm D} \bar{\eta}_x 
= \sum_{x \in \T_n^d} \bar{\eta}_x n^2 \Delta_{\rm D}  g_n (x) 
= G_n (\eta) - \bar{\eta}_0.\]
In the second identity we used the summation by parts formula, and in the last one we used \eqref{gn p1}. For the Glauber dynamics, 
\[L_n^{\rm r} G_n (\eta) = \sum_{x \in \T_n^d} g_n (x) c_x (\eta) (1-2\eta_x).\]
Thus, 
\begin{equation}\label{martingale decomposition}
	\begin{aligned}
			\Gamma^n (t)= \beta_{d,n} G_n (\eta(0)) &- \beta_{d,n} G_n (\eta(t)) + \beta_{d,n} \int_0^t G_n (\eta(s)) ds \\
			&+ \beta_{d,n} \int_0^t \sum_{x \in \T_n^d} g_n (x) c_x (\eta (s)) (1-2\eta_x (s)) ds + M_n (t).
	\end{aligned}
\end{equation}

In the following five subsections, we deal with the five terms on the right hand side of the last equation respectively.

\subsection{The initial term} In this subsection, we prove that
\begin{equation}
	\lim_{n \rightarrow \infty} \E^n_{\rho_*} \big[\big\{ \beta_{d,n} G_n (\eta(0)) \big\}^2\big] = 0.
\end{equation}
Since the initial measure is a product measure, the last expectation equals
\[\chi (\rho_*) \beta_{d,n}^2 \sum_{x \in \T_n^d} g_n (x)^2,\]
which converges to zero for $d \geq 2$ by Lemma \ref{lem: rw}.

\subsection{The term $\beta_{d,n} G_n (\eta(t))$} We shall show that this term also vanishes in $L^2 (\P^n_{\rho_*})$. By developing the square, 
\begin{equation}\label{Gn square}
\begin{aligned}
	\E^n_{\rho_*} &\Big[\Big(\beta_{d,n} G_n (\eta(t))\Big)^2\Big] \\
	&= \beta_{d,n}^2 \sum_{x \in \T_n^d} g_n (x)^2 \E^n_{\rho_*} \big[\bar{\eta}_x (t)^2\big] + \beta_{d,n}^2 \sum_{x,y \in \T_n^d\atop x \neq y} g_n (x) g_n (y) \E^n_{\rho_*} \big[\bar{\eta}_x (t) \bar{\eta}_y (t)\big].
\end{aligned}
\end{equation}
Since $\eta_x$ is bounded by one, the first term on the right hand side is bounded by $\beta_{d,n}^2 \sum_{x \in \T_n^d} g_n (x)^2$, which converges to zero for $d \geq 2$ by Lemma \ref{lem: rw}.  For the second term, we write
\[\E^n_{\rho_*} \big[\bar{\eta}_x (t) \bar{\eta}_y (t)\big] = \mathrm{Cov} (\eta_x (t),\eta_y (t)) + (\E^n_{\rho_*} [\eta_x(t) ] - \rho_*)(\E^n_{\rho_*} [\eta_y(t) ] - \rho_*).\]
We claim that 
\begin{equation}\label{mean value}
|\E^n_{\rho_*} [\eta_x(t) ] - \rho_*| \leq C n^{-d}
\end{equation}
uniformly in $x$. Together with Lemma \ref{lem: correlation} and the fact that $\sum_{x} g_n (x) = 1$,  the second term on the right hand side of \eqref{Gn square} is bounded by $C n^{-d} \beta_{d,n}^2$, which also vanishes as $n \rightarrow \infty$ for $d \geq 2$. 

It remains to prove \eqref{mean value}. Denote
\[m_t = \E^n_{\rho_*} [\eta_x(t) ].\]
Note that the right hand side is independent of $x$ since both the initial measure and the dynamics are translation invariant.  Then,
\[m_t^\prime = \E^n_{\rho_*} [L_n \eta_x (t)] = \E^n_{\rho_*} [c_x (\eta(t)) (1-2\eta(t))].\]
By Lemma \ref{lem: correlation}, 
\[\E^n_{\rho_*} [c_x (\eta(t)) (1-2\eta(t))] = F(m_t) + \varepsilon_n (t),\]
where $|\varepsilon_n (t)| \leq C n^{-d}$ uniformly in $0 \leq t \leq T$. Thus,
\[m_t - \rho_*= \int_0^t F(m_s) ds + \int_0^t \varepsilon_n (s) ds.\]
Since $F(\rho_*) = 0$, by Taylor's expansion,
\[|m_t - \rho_*| \leq \|F^\prime\|_{L^\infty ([0,1])} \int_0^t |m_s - \rho_*| ds + C t n^{-d}. \]
We conclude the proof by Gr{\" o}nwall's inequality.

\begin{remark}\label{rmk betaG_nbounded}
By entropy inequality and Lemma \ref{lem relative entropy}, for any $\gamma > 0$,
\begin{align*}
\E^n_{\rho_*} [|\beta_{d,n} G_n (\eta (t))|] &\leq \frac{H(\mu^n_t | \nu_{\rho_*}^n)}{\gamma} + \frac{1}{\gamma} E_{\nu_{\rho_*}^n} [\exp \{ \gamma |\beta_{d,n} G_n (\eta)| \}]\\
&\leq C \Big( \frac{n^{d-2} g_d (n)}{\gamma} + \gamma \beta_{d,n}^2 \sum_{x \in \mathbb{Z}^d} g_n (x)^2 \Big).
\end{align*}
Optimizing over $\gamma$, we have
\[\E^n_{\rho_*} [|\beta_{d,n} G_n (\eta (t))|]  \leq C \beta_{d,n} \sqrt{n^{d-2} g_d (n) \sum_{x \in \mathbb{Z}^d} g_n (x)^2}.\]
Since the last term is bounded if $d \leq 3$, and diverges if $d \geq 4$, it is not sufficient for our purpose. 
\end{remark}

\begin{remark}
	We remark that this term is the only point where we need the correlation estimate from Lemma \ref{lem: correlation}.  It remains an interesting question to avoid using the correlation estimates so that the strategy presented in this article can be applied to more general models.
\end{remark}

\subsection{The time integral of the term $\beta_{d,n} G_n (\eta(s))$} It follows immediately from Cauchy-Schwarz inequality and the last subsection that 
\[\lim_{n \rightarrow \infty} \E^n_{\rho_*} \Big[\Big(\int_0^t \beta_{d,n} G_n (\eta(s)) ds \Big)^2\Big] \leq t^2 \lim_{n \rightarrow \infty} \sup_{0 \leq s \leq t}  \E^n_{\rho_*} \Big[\Big(\beta_{d,n} G_n (\eta(s))\Big)^2\Big]= 0.\]

\subsection{The fourth term on the right hand side of \eqref{martingale decomposition}} In this subsection, we deal with the term 
\[\beta_{d,n} \int_0^t \sum_{x \in \T_n^d} g_n (x) c_x (\eta (s)) (1-2\eta_x (s)) ds,\]
and shall show that it vanishes in $\P^n_{\rho_*}$-probability as $n \rightarrow 
\infty$.

Since $E_{\nu^n_{\rho_*}} [c_x (\eta) (1-2\eta_x)] = 0$, we have
\begin{equation}\label{fourth term}
c_x (\eta) (1-2\eta_x) = - (\lambda \rho_* + b) \bar{\eta}_x + \frac{\lambda(1-\rho_*)}{2d} \sum_{y: |y-x|=1} \bar{\eta}_y - \frac{\lambda}{2d} \sum_{y: |y-x|=1} \bar{\eta}_y \bar{\eta}_x.
\end{equation}
In the last subsection, we have shown that the first two terms on the right hand side of \eqref{fourth term}, after multiplying by $\beta_{d,n} g_n (x)$, then summing over $x$ and finally integrating the time from $0$ to $t$, converge to zero in $L^2 (\P^n_{\rho_*})$ as $n \rightarrow \infty$. 

The following result deals with the last term on the right hand side of \eqref{fourth term}.

\begin{lemma}\label{lem: second order term}
Assume that $d=2$. For any $0 \leq t \leq T$, any $1 \leq i \leq d$ and  any $\delta > 0$,
\[\lim_{n \rightarrow \infty} \P^n_{\rho_*} \Big( \Big|  \beta_{d,n} \int_0^t \sum_{x \in \T_n^d} g_n (x) \bar{\eta}_x (s) \bar{\eta}_{x+e_i} (s) ds\Big| > \delta\Big) = 0.\]
\end{lemma}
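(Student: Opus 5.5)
We prove the lemma by splitting $V_i(\beta_{d,n} g_n,\eta(s))$ into the block-averaged part $V_i^\ell$ and the remainder $V_i-V_i^\ell$, and bound both through the entropy inequality combined with Lemma \ref{lem relative entropy}. In $d=2$ that lemma gives $H(\mu^n_t|\nu^n_{\rho_*})\le C\log n$. Write $g=\beta_{d,n} g_n$, so the quantity to control is $\int_0^t V_i(g,\eta(s))\,ds$.

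\emph{Step 1: replacement.} For the remainder we use the time-integrated version of Lemma \ref{lem: replacement}, which is the main lemma of \cite{jaram18nonequilireaction}. Denote by $f_s$ the density of $\mu^n_s$ with respect to $\nu^n_{\rho_*}$. The entropy production bound
\[
\int_0^t n^2\!\int \Gamma^{\rm ex}_n(\sqrt{f_s})\,d\nu^n_{\rho_*}\,ds \le C\big(H(\mu^n_0|\nu^n_{\rho_*})+\log n\big)
\]
controls the Dirichlet form; since $H(\mu^n_0|\nu^n_{\rho_*})=0$, this is $C\log n$. Choosing $\gamma=\kappa n^2$, the Dirichlet form term contributes at most $C\kappa\log n$. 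This is still too large, so we do not use the lemma in this crude form. Instead we apply the lemma inside the entropy inequality, which is the sharp form in \cite{jaram18nonequilireaction}. Then the $\Gamma^{\rm ex}$ term is absorbed by the entropy production, and what remains is
\[
\frac{C}{n^2}\int_0^t E_{\mu^n_s}\Big[\sum_{y,z} h^\ell_{y,z}(g,\eta)^2\Big]ds .
\]
We bound this by the entropy inequality again, using concentration of the quadratic form under the product measure $\nu^n_{\rho_*}$ (Hanson--Wright type). The $\nu^n_{\rho_*}$-mean of $\sum_{y,z}h^{\ell\,2}_{y,z}$ is at most $C\,g_2(\ell)\sum_x g(x)^2$ by Lemma \ref{lem flow}(ii). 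By Lemma \ref{lem: rw}(i) this equals
\[
C\log \ell\cdot\frac{n^2}{\log n}\cdot n^{-2}=C\,\frac{\log\ell}{\log n}.
\]
The fluctuation part, after paying entropy $\log n$ at exponential scale $\theta$, costs $\log n/\theta$ plus a term whose size is governed by the operator norm. The operator norm is at most $\log\ell\cdot\sup_x g(x)^2\cdot\ell^{2}$ roughly. Choosing $\ell=n^{\epsilon}$, this gives a total of order $\log\ell/\log n\cdot n^{-2}\cdot(\text{poly})$, which tends to $0$ after the $n^{-2}$ prefactor provided $\epsilon$ is small. I expect this bookkeeping, and in particular the fact that $d=2$ is exactly where $\log n$ entropy and the $\log\ell$ flow cost balance, to be the main obstacle. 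Taking $\ell=n^{\epsilon}$ with $\epsilon\to0$ at the end should close it.

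\emph{Step 2: the averaged term.} For $V_i^\ell$ we use the representation \eqref{V i ell}, $\sum_x \overleftarrow{\eta}_x^{\ell}(g)\,\overrightarrow{\eta}^\ell_{x+e_i}$, and bound its expectation in absolute value via Cauchy--Schwarz and the entropy inequality with Lemma \ref{lem relative entropy}. Under $\nu^n_{\rho_*}$, $\overrightarrow{\eta}^\ell$ has variance $\chi(\rho_*)\ell^{-2}$. Moreover $\sum_x\big(\overleftarrow{\eta}^\ell_x(g)\big)^2$ has mean $\chi(\rho_*)\sum_x g(x)^2\le C/\log n$. Since $\sup_x|\overleftarrow{\eta}^\ell_x(g)|\le \sup_x|g|\le Cn^{-1}\beta$ is small, the exponential moment of $\theta\,|V_i^\ell|$ under $\nu^n_{\rho_*}$ is controlled by a Gaussian-type bound. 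Paying $\log n/\theta$ in entropy, the result is at most
\[
C\Big(\frac{\log n}{\theta}+\theta\,\ell^{-2}\,\frac{1}{\log n}\Big).
\]
Optimizing in $\theta$ gives $C\ell^{-1}$, which vanishes for $\ell=n^{\epsilon}$. If the Gaussian bound for this quadratic form is delicate, the fallback is the alternative route: use Lemma \ref{lem: correlation} directly for the $L^1$ bound
\[
E\big|V_i^\ell\big|\le \Big(\sum_x g^2\Big)^{1/2}\big(E\,\textstyle\sum_x|\overrightarrow{\eta}^\ell|^2\big)^{1/2}\cdots
\]
which again gives $o(1)$.

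Combining Steps 1 and 2 with Markov's inequality yields the claimed convergence in probability.
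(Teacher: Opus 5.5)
Your Step 2 is essentially sound. For $\ell=n^{\epsilon}$, a Hanson--Wright bound on the quadratic form $V_i^\ell(\beta_{d,n}g_n,\eta)$ works: its matrix has squared Frobenius norm $\lesssim \ell^{-2}/\log n$ and operator norm $\lesssim \sqrt{\log n}/n$, and combined with $H(\mu^n_s|\nu^n_{\rho_*})\le C\log n$ this gives $E_{\mu^n_s}|V_i^\ell(\beta_{d,n}g_n,\eta)|\lesssim \ell^{-1}$.

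Step 1, however, has a genuine gap. Everything there is expressed through the one-time densities $f_s$, and the left-hand side of Lemma \ref{lem: replacement} is linear in $f$. Whether you use it crudely or ``inside the entropy inequality'' with the entropy production bound, integrating in $s$ only controls the signed mean $\E^n_{\rho_*}\big[\int_0^t (V_i-V_i^\ell)(g,\eta(s))\,ds\big]$. Repeating with $-g$ gives $|\E^n_{\rho_*}[\cdot]|\to 0$, not $\E^n_{\rho_*}|\cdot|\to 0$, so the closing appeal to Markov's inequality is unjustified. Your fixed-time entropy tools cannot fill this in either. Since $H(\mu^n_s|\nu^n_{\rho_*})\sim\log n$ and $\mathrm{Var}_{\nu^n_{\rho_*}}(V_i(\beta_{d,n}g_n,\eta))=\chi(\rho_*)^2\beta_{d,n}^2\sum_x g_n(x)^2\sim 1/\log n$, the entropy inequality only yields $E_{\mu^n_s}|V_i(\beta_{d,n}g_n,\eta)|=O(1)$ at a fixed time. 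The smallness has to come from time averaging along the path, and that needs a path-level estimate.

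The missing idea is exponential Chebyshev combined with the nonequilibrium Feynman--Kac formula. This reduces the problem to bounding $\sup_f\{\theta\int(\cdots)f\,d\nu^n_{\rho_*}+\frac12\int L_n^*\mathbf{1}\,f\,d\nu^n_{\rho_*}-\int\Gamma_n(\sqrt f)\,d\nu^n_{\rho_*}\}$. Because $\nu^n_{\rho_*}$ is not invariant, the term $\frac12 L_n^*\mathbf 1=\frac{\lambda}{2d\rho_*}\sum_j V_j(\mathbf 1,\eta)$ appears. It has fluctuations of order $n^{d/2}$ under $\nu^n_{\rho_*}$, so it must itself be replaced via Lemma \ref{lem: replacement} and paid for.

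The paper builds this into $B_{\theta,\ell}$:
the term $2\theta\beta_{d,n}^2n^{-2}\sum_{y,z}h^\ell_{y,z}(g_n,\eta)^2$ absorbs the replacement cost of $\theta\beta_{d,n}V_i(g_n,\eta)$, while the $\theta^{-1}$-weighted terms in $V_j^\ell(\mathbf 1,\eta)$ and $h^\ell_{y,z}(\mathbf 1,\eta)^2$ absorb $\frac12 L_n^*\mathbf 1$.
Hence the supremum is nonpositive, and $\P^n_{\rho_*}\big(\int_0^t\{\beta_{d,n}V_i(g_n,\eta(s))-B_{\theta,\ell}(\eta(s))\}ds>\delta/2\big)\le e^{-\delta\theta/2}$. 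Only after this do fixed-time entropy bounds like your Step 2 enter, to show $E_{\mu^n_s}|B_{\theta,\ell}|\to 0$.

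This is also where $d=2$ is used. The parameter $\theta$ must be $\gg\log n$ (the entropy scale) to kill the $\theta^{-1}$ terms coming from $L_n^*\mathbf 1$. It must also be small enough that the $\theta$-proportional flow term vanishes. Both hold for $\log n\ll\theta\ll n^2/(\log n)^2$ with $\ell=\lfloor n/\sqrt{\log n}\rfloor$. So the obstacle is this competition, not the $\log n$ versus $\log\ell$ balance you anticipated.
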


\begin{proof}
We only prove that
\begin{equation}\label{eqn 1}
	\lim_{n \rightarrow \infty} \P^n_{\rho_*} \Big(   \beta_{d,n} \int_0^t \sum_{x \in \T_n^d} g_n (x) \bar{\eta}_x (s) \bar{\eta}_{x+e_i} (s) ds > \delta\Big) = 0.
\end{equation}
By using  similar arguments, one can prove the other direction, thus
proving the lemma.

Fix some integer $\ell = \ell (n)> 0$ and constant $\theta = \theta (n)> 0$, which will be specified later. Define
\begin{equation}\label{B definition}
\begin{aligned}
	B_{\theta,\ell} (\eta) =&  \beta_{d,n} V_i^\ell (g_n,\eta) + \frac{2\theta  \beta_{d,n}^2}{n^2} \sum_{y,z \in \T_n^d \atop |y-z|=1} h_{y,z}^\ell (g_n, \eta)^2 \\
	&+ \frac{\lambda}{2 d \rho_* \theta } \sum_{i=1}^d V_i^\ell (\mathbf{1},\eta) + \frac{\lambda^2}{2d^2 \rho_*^2 n^2 \theta}  \sum_{y,z \in \T_n^d \atop |y-z|=1} h_{y,z}^\ell (\mathbf{1}, \eta)^2.
\end{aligned}
\end{equation}
Recall $V_i^\ell$ and $h_{y,z}^\ell $ from Subsection \ref{subsec replacement estimate}. The function $B_{\theta,\ell} (\eta)$ is carefully chosen so that
\begin{equation}\label{eqn 2}
\lim_{n \rightarrow \infty} \P_{\rho_*}^n \Big(   \int_0^t \big\{  \beta_{d,n} \sum_{x \in \T_n^d} g_n (x) \bar{\eta}_x (s) \bar{\eta}_{x+e_i} (s) - B_{\theta,\ell} (\eta(s)) \big\} ds > \delta/2\Big) = 0,
\end{equation}
\begin{equation}\label{eqn 3}
\lim_{n \rightarrow \infty} \P_{\rho_*}^n \Big(   \int_0^t B_{\theta,\ell} (\eta(s)) ds > \delta/2\Big) = 0.
\end{equation}
This is sufficient to prove \eqref{eqn 1}.

We first deal with \eqref{eqn 2}.  By Markov's exponential inequality and Feynman-Kac formula (see \cite[Lemma A.2]{jara2018non} for example),
\begin{align*}
\log 	&\P_{\rho_*}^n \Big(   \int_0^t \big\{  \beta_{d,n} \sum_{x \in \T_n^d}g_n (x) \bar{\eta}_x (s) \bar{\eta}_{x+e_i} (s) - B_{\theta,\ell} (\eta(s)) \big\} ds > \delta/2\Big) \\
&\leq - \delta \theta /2 + \log \E_{\rho_*}^n \Big[  \exp \Big\{ \int_0^t \theta\big\{  \beta_{d,n} \sum_{x \in \T_n^d} g_n (x) \bar{\eta}_x (s) \bar{\eta}_{x+e_i} (s) - B_{\theta,\ell} (\eta(s)) \big\} ds \Big\} \Big]\\
&\leq - \delta \theta /2 + t \alpha_n 
\end{align*}
where
\begin{align*}
\alpha_n = &\sup_{f: \nu^n_{\rho_{*}} {\rm density}} \Big\{ \int \theta\big\{  \beta_{d,n} \sum_{x \in \T_n^d} g_n (x) \bar{\eta}_x  \bar{\eta}_{x+e_i}  - B_{\theta,\ell} (\eta) \big\} f d \nu^n_{\rho_*} \\
&+ \int \frac{1}{2} L_n^* \mathbf{1} (\eta) f(\eta) d \nu^n_{\rho_*} - \int \Gamma_n (\sqrt{f}) d \nu^n_{\rho_*}  \Big\}.
\end{align*}
In this expression, $L_n^*$ is the adjoint of $L_n$ with respect to the measure $\nu_{\rho_*}^n$. Recall that
\[V_i (g_n,\eta)=\sum_{x \in \T_n^d} g_n (x) \bar{\eta}_x  \bar{\eta}_{x+e_i}.\]
Taking $\gamma = n^2/4$ in Lemma \ref{lem: replacement}, we have
\begin{align*}
\theta  \beta_{d,n}& \int (V_i (g_n,\eta) - V_i^\ell (g_n,\eta)) f (\eta)d \nu^n_{\rho_*} \\
&
\leq \frac{n^2}{4} \int \Gamma_n^{\rm ex} (\sqrt{f}) d \nu^n_{\rho_*} 
+ \frac{2 \theta^2  \beta_{d,n}^2}{ n^2} \int \sum_{y,z \in \T_n^d \atop |y-z|=1} h_{y,z}^\ell (g_n, \eta)^2 f (\eta)d \nu^n_{\rho_*}.
\end{align*}
By direct calculations (see Appendix \ref{subsec eqn13}),
\begin{equation}\label{eqn13}
\frac{1}{2} L_n^* \mathbf{1} = \frac{\lambda}{2d \rho_*} \sum_{x \in \T_n^d} \sum_{i=1}^d \bar{\eta}_x\bar{\eta}_{x+e_i} = \frac{\lambda}{2d \rho_*}  \sum_{i=1}^d V_i (\mathbf{1},\eta).
\end{equation}
Using Lemma \ref{lem: replacement} again, we have
\begin{equation}\label{ln 1 replacement}
\begin{aligned}
	\int & \frac{\lambda}{2d \rho_*}  \sum_{i=1}^d \big\{ V_i (\mathbf{1},\eta) - V_i^\ell (\mathbf{1},\eta) \big\} f d \nu^n_{\rho_*} \\
	&\leq  \frac{n^2}{4} \int \Gamma_n^{\rm ex} (\sqrt{f}) d \nu^n_{\rho_*}  + \frac{\lambda^2}{2d^2 \rho_*^2 n^2} \int \sum_{y,z \in \T_n^d \atop |y-z|=1} h_{y,z}^\ell (\mathbf{1}, \eta)^2 f (\eta)d \nu^n_{\rho_*}.
\end{aligned}
\end{equation}
By the definition of $B_{\theta,\ell}$ from \eqref{B definition}, we have $\alpha_n \leq 0$. Thus, the logarithm of the probability in \eqref{eqn 2} is bounded by $- \delta \theta / 2$. At the end of the proof, we shall take $\theta \gg 1$,  which proves $\eqref{eqn 2}$.

It remains to deal with \eqref{eqn 3}. By Markov's inequality, it suffices to show that
\begin{equation}\label{B vanish}
	\lim_{n \rightarrow \infty} E_{\mu^n_s} [|B_{\theta,\ell}|] = 0, 
\end{equation}
where $\mu^n_s$ is the distribution of the process at time $s$. We deal with the four terms in the definition of $B_{\theta,\ell}$ respectively. By entropy inequality (see \cite[page 338]{klscaling}) and Lemma \ref{lem relative entropy}, for any $\gamma > 0$,
\begin{align*}
	E_{\mu^n_s} \big[ \big| \beta_{d,n} V_i^\ell (g_n,\eta) \big| \big] &\leq \frac{H(\mu^n_s | \nu^n_{\rho_*})}{\gamma} + \frac{1}{\gamma} \log E_{\nu^n_{\rho_*}} \big[ \exp \big\{ \big| \gamma \beta_{d,n} V_i^\ell (g_n,\eta) \big|  \big\} \big] \\
	&\leq \frac{C n^{d-2} g_n (n)}{\gamma} + \frac{1}{\gamma} \log E_{\nu^n_{\rho_*}} \big[ \exp \big\{ \big| \gamma \beta_{d,n} V_i^\ell (g_n,\eta) \big|  \big\} \big].
\end{align*}
Since we will take $\gamma$ large enough, by using the basic inequalities $e^{|x|} \leq e^{x} + e^{-x}$ and $\log (a+b) \leq \log 2 + \max \{\log a, \log b\}$, we can remove the absolute value inside the above exponential. Recall from \eqref{V i ell} that
\[V_i^\ell (g_n, \eta) = \sum_{x \in \T_n^d} \overleftarrow{\eta}_x^\ell (g_n) \overrightarrow{\eta}_{x+e_i}^\ell.\]
Observe that $\overleftarrow{\eta}_x^\ell (g_n) \overrightarrow{\eta}_{x+e_i}^\ell$ and $\overleftarrow{\eta}_y^\ell (g_n) \overrightarrow{\eta}_{y+e_i}^\ell$ are independent under the measure $\nu^n_{\rho_*}$ if $|y-x| > 2 \ell$.  Together with H{\" o}lder's inequality,
\begin{equation}\label{eqn14}
\begin{aligned}
	\log &E_{\nu^n_{\rho_*}} \big[ \exp \big\{ \gamma \beta_{d,n} V_i^\ell (g_n,\eta)   \big\} \big] \\
	&\leq \frac{1}{(2\ell+1)^d} \sum_{x \in \T_n^d}  \log E_{\nu^n_{\rho_*}} \big[ \exp \big\{ \gamma \beta_{d,n} (2\ell+1)^d \overleftarrow{\eta}_x^\ell (g_n) \overrightarrow{\eta}_{x+e_i}^\ell  \big\} \big].
\end{aligned}
\end{equation}

To deal with the last logarithm, we first recall the notion of sub-Gaussian random variables. We say that a random variable $X$ is sub-Gaussian with variance $\sigma^2$ if for any $r \in \R$,
\[\log E [e^{r X}] \leq r^2 \sigma^2 / 2.\] 
Let $X$ and $Y$ be two sub-Gaussian random variables   with variances $\sigma_1^2$ and $\sigma_2^2$ respectively. By using Cauchy-Schwarz inequality, one can show that for any $0 < r < 1/(4 \sigma_1 \sigma_2)$,
\[E [e^{r X Y}] \leq 3,\]
see \cite{jara2018non} for example.

Now, we return to \eqref{eqn14}. One can check directly that $\overleftarrow{\eta}_x^\ell (g_n)$ and $\overrightarrow{\eta}_{x+e_i}^\ell$ are sub-Gaussian with variance of order $\ell^{-2d} \|g_n\|_{\ell^2 (\T_n^d)}^2$ and $\ell^{-d}$ respectively, where $\|g_n\|_{\ell^2 (\T_n^d)}^2 := \sum_{x \in \T_n^d} g_n (x)^2$. By taking $\gamma = \varepsilon_0 \ell^{d/2} \beta_{d,n}^{-1} \|g_n\|_{\ell^2 (\T_n^d)}^{-1}$ for some  constant $\varepsilon_0 >0$ small enough, 
\[E_{\nu^n_{\rho_*}} \big[ \exp \big\{ \gamma \beta_{d,n} (2\ell+1)^d \overleftarrow{\eta}_x^\ell (g_n) \overrightarrow{\eta}_{x+e_i}^\ell  \big\} \big] \leq 3.\]
Thus,
\[E_{\mu^n_s} \big[ \big| \beta_{d,n} V_i^\ell (g_n,\eta) \big| \big] \leq C \ell^{-d/2} \beta_{d,n} \|g_n\|_{\ell^2 (\T_n^d)} \big(n^{d-2}g_d (n) + n^d \ell^{-d}\big).\]
Similarly,
\begin{align}
E_{\mu^n_s} \Big[ \frac{\theta  \beta_{d,n}^2}{n^2} \sum_{y,z \in \T_n^d \atop |y-z|=1} h_{y,z}^\ell (g_n, \eta)^2 \Big] &\leq \frac{C \theta \ell^d g_d (\ell) \beta_{d,n}^2 \|g_n\|_{\ell^\infty (\T_n^d)}^2}{n^2} \big(n^{d-2}g_d (n) + n^d \ell^{-d}\big),\label{eqn15}\\
E_{\mu^n_s} \Big[ \Big|\frac{1}{\theta } \sum_{i=1}^d V_i^\ell (\mathbf{1},\eta)\Big|\Big] &\leq \frac{C}{\theta} \big(n^{d-2}g_d (n) + n^d \ell^{-d}\big),\label{V i ell bound}\\
E_{\mu^n_s} \Big[ \Big|\frac{1}{ n^2 \theta}  \sum_{y,z \in \T_n^d \atop |y-z|=1} h_{y,z}^\ell (\mathbf{1}, \eta)^2 \Big|\Big] &\leq \frac{C \ell^d g_d (\ell)}{n^2 \theta} \big(n^{d-2}g_d (n) + n^d \ell^{-d}\big).\label{h ell 1}
\end{align}
See Apprendix \ref{sec cal1} for detailed calculations of the last three estimates. Note that $\|g_n\|_{\ell^\infty (\T_n^d)} = g_n (0) \leq C \beta_{d,n}^{-2}$ by Lemma \ref{lem: rw}.  

In $d=2$, by taking $\ell = \lfloor n / \sqrt{\log n} \rfloor$, for $n$ large enough, we bound 
\[E_{\mu^n_s} [|B_{\theta,\ell}|] \leq C  \Big( n^{-1} \log n + \theta n^{-2} (\log n)^2 +  \theta^{-1}  \log n \Big).\]
We prove \eqref{B vanish} by taking  $\log n \ll \theta  \ll n^2 / (\log n)^{2}$.
\end{proof}

\begin{remark}
In dimensions $d \geq 3$, the best choice is to take $\ell = \lfloor n^{2/d} \rfloor$. Then, 
\[E_{\mu^n_s} [|B_{\theta,\ell}|] \leq C n^{d-2} \Big( \|g_n\|_{\ell^2 (\T_n^d)} + \theta n^{-2} + \theta^{-1}\Big).\]
The upper bound does not vanish as $n \rightarrow \infty$. This is the reason why our result is restricted to dimension $d=2$.
\end{remark}

\subsection{The martingale term} In this subsection, we prove the following invariance principle for the martingale term.

\begin{lemma}\label{lem martingale convergence}
For $d=2$, the sequence of martingales $\{M_n(t), 0 \leq t \leq T\}_{0 \leq t \leq T}$ converges in distribution, as $n \rightarrow \infty$, to $\{\sqrt{\chi (\rho_*) / \pi} B(t), 0 \leq t \leq T\}$ in the space $D([0,T],\R)$ endowed with the Skorokhod topology.
\end{lemma}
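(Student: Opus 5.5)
We will use the martingale functional central limit theorem (e.g. in the form of Helland or of Jacod--Shiryaev), which requires two things. First, the jumps of $M_n$ must be uniformly small. Second, the quadratic variation $\langle M_n\rangle_t$ must converge in probability to $\chi(\rho_*)t/\pi$ for every $t\in[0,T]$.

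\emph{Jumps.} The jumps are easy to control. A jump of $M_n$ coincides with a jump of $\beta_{d,n}G_n(\eta(t))$. An exchange across a bond $\{x,y\}$ changes this quantity by $\beta_{d,n}(g_n(y)-g_n(x))(\eta_x-\eta_y)$, and a flip at $x$ changes it by at most $\beta_{d,n}g_n(x)$. Both are bounded by $2\beta_{d,n}\|g_n\|_{\ell^\infty}=2\beta_{d,n}g_n(0)\le C\beta_{d,n}^{-1}\to0$, using Lemma \ref{lem: rw}. Hence $\sup_{t\le T}|M_n(t)-M_n(t-)|\to0$ deterministically.

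\emph{Quadratic variation.} Write $\langle M_n\rangle_t=\beta_{d,n}^2\int_0^t 2\{n^2\Gamma_n^{\rm ex}(G_n)+\Gamma_n^{\rm r}(G_n)\}(\eta(s))\,ds$. The Glauber part is $\beta_{d,n}^2\int_0^t\sum_x c_x g_n(x)^2\,ds$. Since $c_x$ is bounded, this is at most $C t\beta_{d,n}^2\sum_xg_n(x)^2\le C t\,n^{-2}\beta_{d,n}^2=Ct/\log n\to0$ by Lemma \ref{lem: rw}(i). The exclusion part equals
\[
\beta_{d,n}^2 n^2\int_0^t\sum_{|x-y|=1}(g_n(x)-g_n(y))^2(\eta_x(s)-\eta_y(s))^2\,ds,
\]
up to the factor fixed by the convention for unordered bonds. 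We expand $(\eta_x-\eta_y)^2=\eta_x+\eta_y-2\eta_x\eta_y$ and center it at $2\chi(\rho_*)$. The resulting identity is
\[
(\eta_x-\eta_y)^2-2\chi(\rho_*)=(1-2\rho_*)(\bar\eta_x+\bar\eta_y)-2\bar\eta_x\bar\eta_y.
\]
The constant part gives $2\chi(\rho_*)t\,n^2\beta_{d,n}^2\sum(g_n(x)-g_n(y))^2$. By Lemma \ref{lem: rw}(ii) this tends to $2\chi(\rho_*)t/(2\pi)=\chi(\rho_*)t/\pi$, which is the required limit. (The normalization of $\Gamma^{\rm ex}_n$ and the bond sum should be double-checked so that this constant matches.)

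\emph{Fluctuating part.} It remains to show that the fluctuating parts vanish in $L^1(\P^n_{\rho_*})$, or in $L^2$. Set the weights $w(x,y)=n^2\beta_{d,n}^2(g_n(x)-g_n(y))^2$; they are nonnegative and have total mass $O(1)$. For the linear term $\sum w(x,y)(\bar\eta_x+\bar\eta_y)$ we compute its second moment at fixed time, exactly as for $G_n$ in the subsection on $\beta_{d,n}G_n(\eta(t))$. The diagonal contribution is bounded by $\sum_x(\sum_y w(x,y))^2\le \sup_{x,y} w(x,y)\cdot O(1)$. The off-diagonal contribution is bounded by $Cn^{-d}(\sum w)^2$, using Lemma \ref{lem: correlation} together with \eqref{mean value}.

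\emph{Main obstacle.} The crucial point is that $\sup_{x,y} w(x,y)\to0$. This is the statement that no single bond carries a macroscopic fraction of the energy. We would prove it from the bound $|g_n(x)-g_n(y)|\le C n^{-2}$ near the origin, which follows from \eqref{gn p1} and local random walk estimates: $n^2|\nabla g_n|\lesssim \int_0^\infty e^{-t}n^2|\nabla p_n(t,\cdot)|\,dt$, and this is $O(1)$ in $d=2$. That bound gives $w\le C\beta_{d,n}^2n^{-2}=C/\log n$. For the quadratic term $\sum w\,\bar\eta_x\bar\eta_y$, the second moment involves four-point correlations, which are not available. Instead we use Cauchy--Schwarz only on the mean of the absolute value, or reuse the entropy argument. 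The entropy inequality combined with Lemma \ref{lem relative entropy} ($H\le C\log n$ in $d=2$) and the sub-Gaussian bound for a quadratic form under $\nu_{\rho_*}^n$ with weights $w$ gives
\[
E_{\mu^n_s}\Big|\sum w\,\bar\eta_x\bar\eta_y\Big|\le C\Big(\frac{\log n}{\gamma}+\gamma\,\|w\|_{\ell^2}^2\Big).
\]
Here $\|w\|_{\ell^2}^2\le\sup w\cdot\sum w\le C/\log n$, and optimizing over $\gamma$ only yields $O(1)$. This is borderline, so we expect it to be the hardest step. We would first try to sharpen $\|w\|_{\ell^2}^2$: the energy of $g_n$ is spread over logarithmically many dyadic scales, and the sum of $w^2$ is dominated by bonds at distance $r$ with $w\sim (\log n)^{-1}r^{-2}$, so $\|w\|_2^2\lesssim(\log n)^{-2}$. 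This gives a bound $O((\log n)^{-1/2})\to0$. Since $\langle M_n\rangle_t$ converges in $L^1$, the martingale central limit theorem yields convergence in $D([0,T],\R)$ to $\sqrt{\chi(\rho_*)/\pi}\,B$.
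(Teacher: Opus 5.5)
Your proposal is correct. For the two fluctuating parts of $\langle M_n\rangle$, however, it follows a different route from the paper.

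\textbf{Linear part.} The paper uses translation invariance to reduce this term to $\E^n_{\rho_*}\bigl|\int_0^t\bar\eta_0(s)\,ds\bigr|\to0$. That limit follows from the $L^1$-boundedness of $\beta_{d,n}\int_0^t\bar\eta_0(s)\,ds$, read off from the martingale decomposition. You instead bound a second moment, using Lemma \ref{lem: correlation}, \eqref{mean value} and $\sup w\lesssim 1/\log n$. Both work.

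\textbf{Quadratic part.} The paper argues dynamically. Feynman--Kac and the flow replacement of Lemma \ref{lem: replacement} swap $\bar\eta_{x+e_i}$ for the block average $\bar\eta^\ell_{x+e_i}$ with $\ell=n/\sqrt{\log n}$, and also absorb the $L_n^*\mathbf{1}$ term. The entropy bound is then applied to a variable of variance $\ell^{-2}$, with $\log n\ll\theta\ll n^2/\log n$. Your bound is static, at each fixed time. The quadratic-variation weights satisfy $\|w\|_{\ell^2}^2\asymp(\log n)^{-2}$ while $H(\mu^n_s|\nu^n_{\rho_*})\lesssim\log n$. The entropy inequality alone therefore gives $O((\log n)^{-1/2})$. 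This is shorter and avoids Feynman--Kac entirely.

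\textbf{Two facts you must state and prove.}
\begin{enumerate}
\item The pointwise gradient bound $|g_n(x)-g_n(x+e_i)|\lesssim n^{-2}(1+|x|)^{-1}$. It goes beyond Lemma \ref{lem: rw} but is standard from heat-kernel gradient estimates, including the torus images.
\item The bound $\log E_{\nu^n_{\rho_*}}e^{\gamma Q}\le C\gamma^2\|w\|_{\ell^2}^2$ must hold for all $\gamma$, and in particular for $\gamma\asymp(\log n)^{3/2}$, which is much larger than $1/\sup w\asymp\log n$. The generic Hanson--Wright estimate only covers $\gamma\lesssim 1/\|A\|_{\rm op}\asymp\log n$. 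In that range the term $H/\gamma$ stays of order one, so you would get only $O(1)$. What rescues you is the nearest-neighbour structure with bounded spins. Order the sites and write $Q=\sum_x\bar\eta_x S_x$, where $S_x$ depends only on earlier sites and $|S_x|\le\sum_{y\sim x}w(x,y)$. Azuma--Hoeffding then gives $\log E_{\nu^n_{\rho_*}}e^{\gamma Q}\le\gamma^2\|w\|_{\ell^2}^2$ for every $\gamma$.
\end{enumerate}

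\textbf{Comparison.} The paper's route needs only the aggregate estimates of Lemma \ref{lem: rw}. It is also the argument that extends to Lemma \ref{lem: second order term}, where the static bound fails. There the weights satisfy $\beta_{d,n}^2\|g_n\|_{\ell^2}^2\asymp 1/\log n$, so entropy alone yields only $O(1)$.
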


\begin{proof}
For any trajectory $m \in D([0,T],\R)$, define
\[J(m,T) = \sup \{ |m(t) - m(t-)|: 0 \leq t \leq T\},\]
where $m(t-) = \lim_{s \uparrow t} m(s)$. Let $\<M_n\>$	be the quadratic variation  of the martingale $M_n$. By \cite[Theorem 2.1]{whitt2007proofs}, it suffices to check the following conditions:
	\begin{itemize}
		\item[(i)] $\lim_{n \rightarrow \infty} \E^n_{\rho_*} [J(\<M_n\>,T)] = \lim_{n \rightarrow \infty} \E^n_{\rho_*} [J(M_n,T)^2] = 0$;
		\item[(ii)] for any $0 \leq t \leq T$,  $\langle M_n \rangle (t)$ converges in probability  to $t \chi (\rho_{*} )/ \pi$ as $n \rightarrow \infty$.  
	\end{itemize}

By direct calculations, the quadratic variation of the martingale equals
\begin{align*}
	\<M_n\> (t) &= \beta_{d,n}^2 \int_0^t \{ L_n G_n(\eta(s))^2 - 2 G_n (\eta(s)) L_n G_n (\eta(s)) \} ds\\
	&=\int_0^t n^2 \beta_{d,n}^2 \sum_{x,y \in \T_n^d \atop |x-y|=1} \big(g_n(x) - g_n(y)\big)^2 \big(\eta_x (s)- \eta_y (s) \big)^2 ds \\
	&\qquad + \int_0^t \beta_{d,n}^2 \sum_{x \in \T_n^d} c_x (\eta(s)) g_n (x)^2 ds.
\end{align*}
Since $\<M_n\>$ is continuous in  time $t$, $J(\<M_n\>, T) = 0$ for any $n$. From the definition of the martingale,
\[J(M_n,T) \leq \beta_{d,n} \sup_{x,y \in \T_n^d \atop |x - y|=1} |g_n (x) - g_n (y)| + \beta_{d,n} \|g_n\|_{\ell^\infty (\T_n^d)}.\]
By Lemma \ref{lem: rw}, the upper bound converges to zero as $n \rightarrow \infty$ for $d \geq 2$. This proves condition (i).

It remains to verify condition (ii).  Since $c_x (\eta)$ is uniformly bounded in $x$ and $\eta$, 
\[\int_0^t \beta_{d,n}^2 \sum_{x \in \T_n^d} c_x (\eta(s)) g_n (x)^2 ds \leq C t \beta_{d,n}^2 \sum_{x \in \T_n^d} g_n (x)^2 ,\]
which vanishes as $n \rightarrow \infty$ by Lemma \ref{lem: rw} in dimensions $d \geq 2$. Using Lemma \ref{lem: rw} again, it suffices to show that, for any $\delta > 0$,
\begin{equation}\label{eqn 4}
	\lim_{n \rightarrow \infty} \P_{\rho_*}^n \Big( \big|\int_0^t  n^2 \beta_{d,n}^2 \sum_{x,y \in \T_n^d \atop |x-y|=1} \big(g_n(x) - g_n(y)\big)^2 \big\{ \big(\eta_x (s)- \eta_y (s)  \big)^2 - 2 \chi (\rho_*) \big\}ds  \big| > \delta \Big) = 0.
\end{equation}
We first write
\[\big(\eta_x - \eta_y  \big)^2 - 2 \chi (\rho_*) = (1-2\rho_*) \bar{\eta}_x + (1-2\rho_*)  \bar{\eta}_y - 2 \bar{\eta}_x \bar{\eta}_y.\]
Then, we conclude the proof once we can show that, for any $1 \leq i \leq d$ and any $\delta > 0$, 
\begin{equation}\label{eqn 6}
	\lim_{n \rightarrow \infty} \P_{\rho_{*}}^n \Big( \big|\int_0^t  n^2 \beta_{d,n}^2 \sum_{x \in \T_n^d } \big(g_n(x) - g_n(x+e_i)\big)^2  \bar{\eta}_x (s) ds  \big| > \delta \Big) = 0,
\end{equation}
\begin{equation}\label{eqn 7}
	\lim_{n \rightarrow \infty} \P_{\rho_{*}}^n \Big( \big|\int_0^t  n^2 \beta_{d,n}^2 \sum_{x \in \T_n^d} \big(g_n(x) - g_n(x+e_i)\big)^2 \bar{\eta}_x (s) \bar{\eta}_{x+e_i} (s) ds  \big| > \delta \Big) = 0.
\end{equation}

For \eqref{eqn 6}, by Markov's inequality, Lemma \ref{lem: rw} and the fact that both the process and the initial measure are translation invariant, it suffices to show that
\begin{equation}\label{eqn 8}
\lim_{n \rightarrow \infty} \E_{\rho_*}^n \Big[\Big|\int_0^t \bar{\eta}_0 (s) ds \Big|\Big] = 0.
\end{equation}
From \eqref{martingale decomposition}, Remark \ref{rmk betaG_nbounded} and the above arguments, it is easy to see that
\[\limsup_{n \rightarrow \infty} \E_{\rho_*}^n \Big[\Big|\beta_{d,n} \int_0^t \bar{\eta}_0 (s) ds \Big|\Big] < + \infty,\]
which proves \eqref{eqn 8} since $\lim_{n \rightarrow \infty} \beta_{d,n} = +\infty$.

For \eqref{eqn 7}, we only prove that 
\begin{equation}\label{eqn 9}
	\lim_{n \rightarrow \infty} \P_{\rho_*}^n \Big( \int_0^t  n^2 \beta_{d,n}^2 \sum_{x \in \T_n^d} \big(g_n(x) - g_n(x+e_i)\big)^2 \bar{\eta}_x (s) \bar{\eta}_{x+e_i} (s) ds  > \delta \Big) = 0,
\end{equation}
and the other direction  can be proved in the same way. The proof is similar to that of Lemma \ref{lem: second order term}. For any integer $\ell > 0$, any $\theta > 0$ and any $x \in \T_n^d$, define
\begin{equation}\label{B frak}
\begin{aligned}
	\mathfrak{B}_{x,\ell, \theta} (\eta) =& \bar{\eta}_x \bar{\eta}_{x+e_i}^\ell + \frac{2c_n \theta}{n^2} \sum_{y,z \in \T_n^d \atop |y-z|=1} h_{y,z}^\ell (\delta_x,\eta)^2 \\
	&+ \frac{\lambda}{2d\rho_* c_n \theta} \sum_{j=1}^d V_j^\ell (\mathbf{1},\eta) +\frac{\lambda^2}{2d^2\rho_*^2 n^2 c_n \theta} \sum_{y,z \in \T_n^d \atop |y-z|=1} h_{y,z}^\ell (\mathbf{1},\eta)^2,
\end{aligned}
\end{equation}
where $c_n := n^2 \beta_{d,n}^2 \sum_{x \in \T_n^d} \big(g_n(x) - g_n(x+e_i)\big)^2$. Note that, by Lemma \ref{lem: rw}, $c_n$ is uniformly bounded in $n$. It suffices to show that
\begin{multline}\label{eqn 11}
	\lim_{n \rightarrow \infty} \P_{\rho_*}^n \Big( \int_0^t  n^2 \beta_{d,n}^2 \sum_{x \in \T_n^d} \big(g_n(x) - g_n(x+e_i)\big)^2 \\
	\times \{ \bar{\eta}_x (s) \bar{\eta}_{x+e_i} (s) - \mathfrak{B}_{x,\ell,\theta} (\eta(s)) \}ds  > \delta/2 \Big) = 0,
\end{multline}
\begin{equation}\label{eqn 12}
	\lim_{n \rightarrow \infty} \P_{\rho_*}^n \Big( \int_0^t  n^2 \beta_{d,n}^2 \sum_{x \in \T_n^d} \big(g_n(x) - g_n(x+e_i)\big)^2  \mathfrak{B}_{x,\ell,\theta} (\eta(s)) ds  > \delta/2 \Big) = 0.
\end{equation}

By Markov's exponential inequality, Jensen's inequality and Feynman-Kac formula, the logarithm of the probability in \eqref{eqn 11} is bounded by 
\begin{align*}
	- \delta \theta/2 
	+ \frac{n^2 \beta_{d,n}^2}{c_n}& \sum_{x \in \T_n^d} \big(g_n(x) - g_n(x+e_i)\big)^2  \\
	&\times \log \E^n_{\rho_*} \Big[ \exp \Big\{ c_n \theta \int_0^t [\bar{\eta}_x (s) \bar{\eta}_{x+e_i} (s) - \mathfrak{B}_{x,\ell,\theta} (\eta(s)) ] ds \Big\}\Big]\\
	\leq 	- \delta \theta/2  + \frac{n^2 \beta_{d,n}^2}{c_n} &\sum_{x \in \T_n^d} \big(g_n(x) - g_n(x+e_i)\big)^2  t \gamma_{n,x},
\end{align*}
where 
\begin{align*}
\gamma_{n,x} = \sup_{f: \nu_{\rho^n_*}{\rm-density}} \Big\{  &\int c_n \theta (\bar{\eta}_x \bar{\eta}_{x+e_i} - \mathfrak{B}_{x,\ell,\theta}  (\eta)) f (\eta) d \nu^n_{\rho_*} \\
&+ \int \frac{1}{2} L_n^* \mathbf{1}  (\eta)f (\eta) d \nu^n_{\rho_*} - \int \Gamma_n  (\sqrt{f}) d \nu^n_{\rho_*}\Big\}. 
\end{align*}
We will show that $\gamma_{n,x} \leq 0$, which allows us to bound the logarithm of the probability in \eqref{eqn 11} by $- \delta \theta/2$. Indeed, by choosing $g(y) = \delta_x (y)$ in Lemma \ref{lem: replacement}, 
\begin{align*}
	\int& c_n \theta (\bar{\eta}_x \bar{\eta}_{x+e_i}- \bar{\eta}_x \bar{\eta}_{x+e_i}^\ell ) f (\eta) d \nu^n_{\rho_*} \\
	&\leq \frac{n^2}{4} \int \Gamma_n^{\rm ex} (\sqrt{f}) d \nu^n_{\rho_*} 
	+ \int \frac{2c_n^2 \theta^2}{n^2} \sum_{y,z \in \T_n^d \atop |y-z|=1} h_{y,z}^\ell (\delta_x,\eta)^2 f (\eta) d \nu^n_{\rho_*}.
\end{align*}
Then, it is easy to see that $\gamma_{n,x} \leq 0$ by \eqref{ln 1 replacement} and the definition of $\mathfrak{B}_{x,\ell, \theta} (\eta)$.  

To prove \eqref{eqn 12}, by Markov's inequality, translation invariance of the process and the fact that $c_n$ is bounded, it suffices to show that
\begin{equation}
	\lim_{n \rightarrow \infty} E_{\mu^n_s} \big[ \big| \mathfrak{B}_{0,\ell,\theta}\big|\big] = 0.
\end{equation}
For the first term in the definition of $\mathfrak{B}_{0,\ell,\theta}$ from \eqref{B frak}, by entropy inequality and the fact that $\bar{\eta}_{e_i}^\ell$ is sub-Gaussian with variance of order $\ell^{-d}$ with respect to the measure $\nu^n_{\rho_*}$, for any $\gamma > 0$,
\begin{align*}
	E_{\mu^n_s} \big[ \big| \bar{\eta}_0 \bar{\eta}_{e_i}^\ell\big|\big] &\leq  	E_{\mu^n_s} \big[ \big| \bar{\eta}_{e_i}^\ell\big|\big]  \leq \frac{1}{\gamma} \Big(H (\mu^n_s | \nu^n_{\rho_*}) + \log E_{\nu^n_{\rho_*}} \big[ \exp \big\{ \gamma |\bar{\eta}_{e_i}^\ell| \big\} \big] \Big)\\
	&\leq C \Big( \frac{n^{d-2} g_d (n)}{\gamma} + \frac{\gamma}{\ell^d} \Big).
\end{align*}
For the second term in the definition of $\mathfrak{B}_{0,\ell,\theta}$, note that 
\[h_{y,z}^\ell (\delta_0,\eta) = \phi_\ell (y-e_i,z-e_i) \bar{\eta}_0.\]
Since $\bar{\eta}_0$ is bounded, by Lemma \ref{lem flow},
\[\frac{2c_n \theta}{n^2} \sum_{y,z \in \T_n^d \atop |y-z|=1} h_{y,z}^\ell (\delta_x,\eta)^2 \leq \frac{2 c_n \theta}{n^2} \sum_{y,z \in \T_n^d \atop |y-z|=1}  \phi_\ell (y-e_i,z-e_i)^2 \leq  \frac{C \theta}{n^2} g_d (\ell).\]
Recall that the last two terms in the definition of $\mathfrak{B}_{0,\ell,\theta}$ have been  dealt with in  Equations \eqref{V i ell bound} and \eqref{h ell 1}. Adding up the above sitimates, we have
\[E_{\mu^n_s} \big[ \big| \mathfrak{B}_{0,\ell,\theta}\big|\big] \leq C \Big(\frac{n^{d-2} g_d (n)}{\gamma} + \frac{\gamma}{\ell^d} + \frac{\theta g_d (\ell) }{n^2}+ \frac{1}{\theta} \big(1+\frac{\ell^dg_d(\ell)}{n^2}\big)\big(n^{d-2}g_d(n) + \frac{n^d}{\ell^d}\big)\Big).\]
In $d=2$, by choosing $\ell = \lfloor n / \sqrt{\log n}\rfloor$ and $\gamma = n$, we bound 
\[E_{\mu^n_s} \big[ \big| \mathfrak{B}_{0,\ell,\theta}\big|\big]  \lesssim \frac{\log n}{n} + \frac{\theta \log n}{n^2} + \frac{\log n}{\theta}.\]
Finally, we conclude the proof by taking $\log n \ll \theta \ll n^2 / \log n$.
\end{proof}

\begin{remark}
	The last lemma also holds for $d=3$ with the limiting variance $2 \chi( \rho_*) \int_0^\infty \mathfrak{p} (t,0) dt$. Indeed, in this case, we bound $E_{\mu^n_s} \big[ \big| \mathfrak{B}_{0,\ell,\theta}\big|\big]$ by a constant multiple of 
	\[\frac{n^{d-2}}{\gamma} + \frac{\gamma}{\ell^d} + \frac{\theta}{n^2} + \frac{1}{\theta} \Big(1+\frac{\ell^d}{n^2}\Big) \Big(n^{d-2} + \frac{n^d}{\ell^d}\Big). \]
	By taking $\ell = \lfloor n^{2/d}\rfloor, \gamma = n^{d/2}$, the last expression is bounded by $C (n^{d/2-2} + \theta / n^2 + n^{d-2} / \theta)$. For $d=3$, one can take $n \ll \theta \ll n^2$ to conclude the proof.
\end{remark}

\section{Moderate deviations}\label{sec:mdp}

In this section, we prove Theorem \ref{thm occu mdp} and Corollary \ref{cor mdp degree one}. In subsection \ref{sec: density fluc fields}, we recall the MDP for the density fluctuation field of the process proved in \cite{zhao2026moderate}. In subsection \ref{subsec exp tight}, we relate the occupation time to the density fluctuation field at the level of moderate deviations, from which the exponential tightness of the occupation time follows immediately.  Finite dimensional upper and lower bounds of the MDP are proved in subsections \ref{subsec upper mdp} and \ref{subsec lower mdp} respectively. We conclude the proof of Theorem \ref{thm occu mdp} in subsection \ref{subsec pf main thm}. Finally, we prove Corollary \ref{cor mdp degree one} in subsection \ref{subsec pf coro}.

\subsection{Density fluctuation fields}\label{sec: density fluc fields} For integer $n \geq 1$, let $\mu^n := \{\mu^n_t, 0 \leq t \leq T\}$ be the rescaled density fluctuation field,
$\mu^n_t = \mc{Y}^n_t / a_n.$
First, we need to introduce the moderate deviations rate function.  For $\mu_0 \in \mathcal{D}^\prime (\T)$, the rate function corresponding to the initial distribution is defined as
\begin{equation}\label{q 0}
	\mathcal{Q}_0 (\mu_0) := \sup_{\phi \in \mathcal{D} (\T)} \Big\{ \<\mu_0,\phi\> - \frac{\chi(\rho_*)}{2} \|\phi\|_{L^2 (\T)}^2 \Big\}.
\end{equation}
Let $\mc{C}^{1,\infty} ([0,T] \times \T)$ be the space of functions on $[0,T] \times \T$ which are continuously differentiable  in the time variable and smooth in the space variable. For $\mu \in D([0,T], \mathcal{D}^\prime (\T))$ and $H \in \mc{C}^{1,\infty} ([0,T] \times \T)$, define the linear functional
\[\ell_T (\mu,H) := \<\mu_T,H_T\> - \<\mu_0,H_0\> - \int_0^T \<\mu_s, (\partial_s+\Delta +F^\prime (\rho_*)) H_s\> ds.\]
The rate function corresponding to the dynamics of the process  is defined as
\[\mathcal{Q}_{\rm dyn} (\mu) := \sup_{H \in \mc{C}^{1,\infty} ([0,T] \times \T)} \Big\{ \ell_T (\mu,H)  -  \chi (\rho_*)  \|\nabla H\|_{L^2 ([0,T] \times \T)}^2 - \frac{G(\rho_*)}{2} \|H\|_{L^2 ([0,T] \times \T)}^2\Big\},\]
where $G(\rho)$ was defined in Proposition \ref{prop density fluc}. Finally, the rate function is defined as \[\mathcal{Q}_T = \mathcal{Q}_0 + \mathcal{Q}_{\rm dyn}.\]

The following nonequalibrium MDP for the process was proved in \cite{zhao2026moderate}.

\begin{proposition}\label{prop mdp fluc}
	The sequence  $\{\mu^n\}_{n \geq 1}$ satisfies the MDP with decay rate $a_n^2 $ and with rate function $\mathcal{Q}_T$. Precisely speaking, for any closed set $C$ and any open set $O$ in $D([0,T], \mathcal{D}^\prime (\T))$,
	\begin{align*}
		\limsup_{n \rightarrow \infty} \frac{1}{a_n^2} \log \P^n_{\rho_*} \big( \mu^n \in C\big) \leq - \inf_{\mu \in C} \mathcal{Q}_T (\mu),\\
		\liminf_{n \rightarrow \infty} \frac{1}{a_n^2} \log \P^n_{\rho_*} \big( \mu^n \in O\big) \geq - \inf_{\mu \in O} \mathcal{Q}_T (\mu).
	\end{align*}
\end{proposition}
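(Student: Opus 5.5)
The plan is the classical scheme for moderate deviations of fluctuation fields: exponential martingales for the upper bound, a change of measure to a weakly tilted dynamics for the lower bound, and superexponential replacement estimates to remove the nonlinear terms. First, for $H\in\mc{C}^{1,\infty}([0,T]\times\T)$, I will consider the Dynkin exponential martingale
\[\exp\Big\{a_n^2\Big[\<\mu^n_t,H_t\>-\<\mu^n_0,H_0\>\Big]-\int_0^t e^{-a_n^2\<\mu^n_s,H_s\>}(\partial_s+L_n)e^{a_n^2\<\mu^n_s,H_s\>}ds\Big\}.\]
I will expand the generator term to second order in $a_n/\sqrt n$. The exclusion part gives $a_n^2\<\mu^n_s,\Delta H_s\>$ plus the quadratic variation $a_n^2\chi(\rho_*)\|\nabla H_s\|^2$, up to $o(a_n^2)$ corrections. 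The Glauber part gives, by \eqref{fourth term}, a linear drift $a_n^2\<\mu^n_s,F'(\rho_*)H_s\>$. It also produces the degree-two term
\[\frac{a_n}{\sqrt n}\sum_x H_s(\tfrac xn)\,\bar\eta_x\bar\eta_{x+1},\]
together with the quadratic variation $\tfrac{a_n^2}{2}G(\rho_*)\|H_s\|^2$. I will need to replace $\bar\eta_x(1-2\eta_x)$-type corrections by their $\nu^n_{\rho_*}$ means.

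The main obstacle is to show that the degree-two term is superexponentially small at speed $a_n^2$. That is, for every $\delta>0$,
\[\limsup_n a_n^{-2}\log\P^n_{\rho_*}\Big(\Big|\int_0^t \frac{a_n}{\sqrt n}\sum_x H_s(\tfrac xn)\bar\eta_x\bar\eta_{x+1}\,ds\Big|>\delta a_n^2\Big)=-\infty.\]
Since the initial measure is not invariant, I plan to follow the proof of Lemma \ref{lem: second order term}. I will apply the Feynman--Kac bound with the correction $\tfrac12 L_n^*\mathbf 1$ from \eqref{eqn13}, which is itself a degree-two term. Lemma \ref{lem: replacement} will replace $V_1$ by $V_1^\ell$ at the cost of the $h^\ell$ squares, and Lemma \ref{lem flow} with $g_1(\ell)=\ell$ controls these. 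I will then bound the exponential moments of $V_1^\ell$ and $h^\ell$ under $\nu^n_{\rho_*}$ through the sub-Gaussian product estimate (\eqref{eqn14}). Because $d=1$, the relative entropy in Lemma \ref{lem relative entropy} is $O(1)$, which is much smaller than $a_n^2$. For this reason the entropy cost never competes with the speed, and a choice such as $\ell=n/a_n$ with $\theta=\kappa a_n^2$, $\kappa\to\infty$ slowly, should make all error terms $o(a_n^2)$. If the entropy route is too weak, my fallback is the logarithmic Sobolev inequality for the Glauber part, which controls the $L_n^*\mathbf 1$ term uniformly.

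With the nonlinear terms controlled, I will prove three further ingredients. (a) Exponential tightness in $D([0,T],\mathcal D'(\T))$. I will obtain it from the martingale above through Mitoma-type criteria applied to each $\<\mu^n_t,\phi\>$, using the Sobolev-norm compact sets of $\mathcal D'$. (b) The initial-time MDP. Since the initial law is the product measure $\nu^n_{\rho_*}$, a direct Cramér/Gärtner--Ellis computation gives $a_n^{-2}\log E\exp\{a_n^2\<\mu^n_0,\phi\>\}\to\tfrac{\chi(\rho_*)}2\|\phi\|^2$, whose Legendre transform is $\mathcal Q_0$. (c) The upper bound. For compact sets I will combine the martingale with the initial exponential moment and optimize over $(H,\phi)$ through a minimax argument. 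This yields $-\inf(\mathcal Q_0+\mathcal Q_{\rm dyn})$, and exponential tightness extends it to closed sets.

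For the lower bound, I will first identify the paths of finite rate. These are solutions of $\partial_t\mu=\Delta\mu+F'(\rho_*)\mu-2\chi\Delta H+G H$ with $\mu_0=\chi(\rho_*)\phi$, for $H$ smooth. Such paths will be shown dense in energy, so that it suffices to treat them. For a fixed smooth $H$ and $\phi$, I will tilt the process. The initial measure is tilted to the product measure with profile $\rho_*+\tfrac{a_n}{\sqrt n}\chi\phi$, and the dynamics through the exponential martingale, which yields weakly perturbed exclusion and Glauber rates. Under the tilted law I will prove a law of large numbers $\mu^n\to\mu^H$, again using the replacement lemma to control the degree-two terms. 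The relative entropy of the tilted law with respect to $\P^n_{\rho_*}$, divided by $a_n^2$, converges to $\mathcal Q_T(\mu^H)$. The standard entropy inequality then gives the lower bound for open sets.
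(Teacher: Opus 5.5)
The paper does not prove this proposition; it quotes it from \cite{zhao2026moderate}. Your overall architecture is the standard one: exponential martingales, a Cram\'er computation at time zero, minimax plus exponential tightness for the upper bound, and tilting for the lower bound. Your drift $\Delta+F'(\rho_*)$, the two quadratic variations $\chi(\rho_*)\|\nabla H\|^2$ and $\tfrac12 G(\rho_*)\|H\|^2$, and the finite-rate paths are all correct. The gap is in the step you yourself single out: the superexponential replacement of the degree-two terms. As proposed, it does not work.

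In Lemma \ref{lem: second order term}, Feynman--Kac only handles $\int_0^t(\cdots-B_{\theta,\ell})\,ds$. The remainder $\int_0^t B_{\theta,\ell}(\eta(s))\,ds$ is handled by Markov's inequality on $E_{\mu^n_s}[|B_{\theta,\ell}|]$, using Lemma \ref{lem relative entropy}.

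\textbf{Why the fixed-time entropy route fails.} The entropy inequality at a fixed time turns exponential moments under $\nu^n_{\rho_*}$ into \emph{first} moments under $\mu^n_s$. Even if $\nu^n_{\rho_*}(A)\le e^{-Ca_n^2}$, it only gives $\mu^n_s(A)\le (\log 2+H(\mu^n_s|\nu^n_{\rho_*}))/\log(1+\nu^n_{\rho_*}(A)^{-1})=O(a_n^{-2})$. So this route yields convergence in probability, never bounds $e^{-Ca_n^2}$ for every $C$. Those are what the upper bound, exponential tightness and the law of large numbers under the tilted law require. The fact that $H(\mu^n_s|\nu^n_{\rho_*})=O(1)\ll a_n^2$ is therefore beside the point.

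\textbf{What actually controls these terms.} Every piece, including the $V_1^\ell$ and $h^\ell$ terms, must be controlled inside the Feynman--Kac supremum over arbitrary densities $f$. The exclusion Dirichlet form cannot do this. For $f=d\nu^n_\rho/d\nu^n_{\rho_*}$ one has $\Gamma_n^{\rm ex}(\sqrt f)=0$, while $\int V_1^\ell(\mathbf 1,\eta) f\,d\nu^n_{\rho_*}=n(\rho-\rho_*)^2$ and $\int \tfrac12 L_n^*\mathbf 1\, f\,d\nu^n_{\rho_*}=\tfrac{\lambda}{2\rho_*}n(\rho-\rho_*)^2$. What closes the argument is bounding $H(f|\nu^n_{\rho_*})$, for every $f$, by the Glauber Dirichlet form via the logarithmic Sobolev inequality \eqref{log sob ineq}. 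This has to be applied to every degree-two term, not only to $\tfrac12 L_n^*\mathbf 1$.

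For $\tfrac12 L_n^*\mathbf 1$ it works only when $|\lambda|$ is small, cf. \eqref{lambda c condition}--\eqref{Ln* 1}. It is not a uniform bound, as your proposal claims. Once this is in place the replacement is easy with $\ell$ of order $n$, exactly as in the proofs of Lemma \ref{lem superexponential replacement} and Corollary \ref{cor mdp degree one}. So what you list as a fallback is the core of the proof, and the smallness condition on $\lambda$ must enter your argument explicitly.
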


Recall that $\{\mc{Y}_t, 0 \leq t \leq T\}$ is the limit of $\{\mc{Y}^n_t, 0 \leq t \leq T\}$ stated in Proposition \ref{prop density fluc}. Since the process $\{\mc{Y}_t, 0 \leq t \leq T\}$ is Gaussian, the sequence of processes $\{\mc{Y}_t / a_n, 0 \leq t \leq T\}$ satisfies the large deviation principles with decay rate $a_n^2$. Moreover, it turns out that the rate function is given by $\mc{Q}_T$ exactly the same as that of the sequence of processes $\{\mu^n_t, 0 \leq t \leq T\}$. Since the proof follows from the standard exponential martingale approach, we do not repeat the proof here, see \cite[Chapter 10]{klscaling} for example.  Actually, the proof is simpler in our case since we do not need the superexponential replacement lemma.

\begin{proposition}\label{prop mdp y_t}
	The sequence of processes $\{\mathcal{Y}_t / a_n, 0 \leq t \leq T\}_{n \geq 1}$ satisfies the large deviation principle with decay rate $a_n^2 $ and with rate function $\mathcal{Q}_T$. Precisely speaking, for any closed set $C$ and any open set $O$ in $D([0,T], \mathcal{D}^\prime (\T^d))$,
	\begin{align*}
		\limsup_{n \rightarrow \infty} \frac{1}{a_n^2} \log \P^n_{\rho_*} \Big( \{\mathcal{Y}_t/a_n, 0 \leq t \leq T\} \in C\Big) \leq - \inf_{\mu \in C} \mathcal{Q}_T (\mu),\\
		\liminf_{n \rightarrow \infty} \frac{1}{a_n^2} \log \P^n_{\rho_*} \Big( \{\mathcal{Y}_t/a_n, 0 \leq t \leq T\} \in O\Big) \geq - \inf_{\mu \in O} \mathcal{Q}_T (\mu).
	\end{align*}
\end{proposition}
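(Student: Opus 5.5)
The plan is to prove the MDP for $\mu^n=\mathcal{Y}^n/a_n$ under $\P^n_{\rho_*}$ by the exponential martingale method of \cite[Chapter 10]{klscaling}, adapted to moderate scales. The rate function splits as $\mathcal{Q}_0+\mathcal{Q}_{\rm dyn}$, and I would handle the two parts separately.

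\emph{Initial MDP.} At time zero $\nu^n_{\rho_*}$ is product. For $\phi\in\mathcal{D}(\T)$ a second-order Taylor expansion gives
\[
a_n^{-2}\log E_{\nu^n_{\rho_*}}\big[e^{a_n^2\<\mu^n_0,\phi\>}\big]\to \tfrac{\chi(\rho_*)}{2}\|\phi\|_{L^2(\T)}^2,
\]
where the cubic error is $O(a_n/\sqrt n)\to0$. The G\"artner--Ellis theorem, applied on the projective limit over finitely many test functions, then yields $\mathcal{Q}_0$.

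\emph{Dynamics: the exponential martingale.} For $H\in\mc{C}^{1,\infty}([0,T]\times\T)$ set $f_t(\eta)=a_n\sqrt n^{-1}\sum_x\bar\eta_xH_t(x/n)$. Then $\exp\{a_n^2[\<\mu^n_t,H_t\>-\<\mu^n_0,H_0\>]-\int_0^te^{-a_nf}(\partial_s+L_n)e^{a_nf}ds\}$ is a mean-one martingale. I would expand $e^{-a_nf}L_ne^{a_nf}$ to second order in $a_n/\sqrt n$.
\begin{itemize}
\item The exclusion part produces $a_n^2\<\mu^n_s,\Delta H_s\>$ plus the quadratic variation $a_n^2\sum_x\frac1n(\nabla H)^2(\eta_x-\eta_{x+1})^2$.
\item The Glauber part produces the linear term $a_n\sqrt n^{-1}\sum_xH(x/n)c_x(1-2\eta_x)$ plus $\tfrac{a_n^2}{2n}\sum_xH^2c_x$.
\end{itemize}
By \eqref{fourth term}, the linear Glauber term equals $a_n^2\<\mu^n_s,F'(\rho_*)H_s\>$ up to a lattice error, plus a quadratic piece $-\frac{\lambda a_n}{2\sqrt n}\sum_xH(x/n)\bar\eta_x(\bar\eta_{x-1}+\bar\eta_{x+1})$. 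The quadratic variations must also be replaced by their means $2\chi(\rho_*)\|\nabla H\|^2$ and $G(\rho_*)\|H\|^2$ (the latter being $E_{\nu_{\rho_*}}[c_x]$); the corrections are linear and quadratic in $\bar\eta$.

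\emph{Main obstacle: superexponential replacement.} The key step is to show that for every $\delta>0$
\[
\limsup_n a_n^{-2}\log\P^n_{\rho_*}\Big(\Big|\tfrac{a_n}{\sqrt n}\int_0^t\sum_xH_s(\tfrac xn)\bar\eta_x\bar\eta_{x+1}ds\Big|>\delta a_n^2\Big)=-\infty,
\]
and likewise for the centred quadratic-variation terms, which are smaller by a factor $a_n/\sqrt n$. The plan is as follows.
\begin{enumerate}
\item Use Markov's exponential inequality with parameter $\theta a_n^2$ and the Feynman--Kac bound, as in the proof of Lemma \ref{lem: second order term}. Since the measure is not reversible, the variational formula contains the extra term $\tfrac12L_n^*\mathbf1=\frac{\lambda}{2d\rho_*}\sum V_i(\mathbf1,\eta)$ from \eqref{eqn13}.
\item Replace $V_i$ by $V_i^\ell$ using Lemma \ref{lem: replacement}, absorbing the gradient cost into $n^2\Gamma^{\rm ex}_n$ with flow cost $\theta^2a_n^2\ell/n^2$ by Lemma \ref{lem flow} in $d=1$.
\item Bound the resulting block averages $\sum_x\overleftarrow\eta^\ell_x\overrightarrow\eta^\ell_{x+1}$, which are sub-Gaussian products, using the spectral gap and logarithmic Sobolev inequality of the Glauber part. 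This inequality holds because $c_x\ge\min(a,b)-|\lambda|>0$ for $\lambda$ small, i.e.\ \eqref{lambda c condition}. It lets me dominate $\int Vf\,d\nu$ by $\varepsilon\int\Gamma^{\rm r}_n(\sqrt f)+\varepsilon^{-1}\log E_\nu[e^{\varepsilon V}]$, and hence control the non-reversible term $L_n^*\mathbf1$ by Dirichlet forms.
\end{enumerate}
The choice of $\ell$ (e.g.\ $\ell\sim n/a_n$ up to logs) has to balance $\theta^2a_n^2\ell/n^2$, the sub-Gaussian bound $\sim\theta a_n\sqrt{n}\,\ell^{-1}\cdot$const, and the $L_n^*\mathbf1$ cost. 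This bookkeeping is where I expect the difficulty. I would first try $\ell=\lfloor n/a_n\rfloor$ and $\theta\to\infty$ slowly.

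\emph{Upper and lower bounds.} With the replacement in hand, the upper bound on compacts follows by optimizing over $H$ to get $\mathcal{Q}_{\rm dyn}$, combined with the initial bound. Exponential tightness in $D([0,T],\mathcal{D}'(\T))$ comes from the same martingale estimates and Mitoma's criterion. For the lower bound I would perturb the dynamics by the tilt $H$ through the exponential martingale. By the replacement, under the tilted law $\mu^n$ concentrates on the solution of $\partial_t\mu=(\Delta+F'(\rho_*))\mu-2\chi\Delta H+G H$ with prescribed $\mu_0$, and the relative entropy per $a_n^2$ converges to the rate. A density argument (paths with smooth $H$ are $\mathcal{Q}_T$-dense) completes the proof.
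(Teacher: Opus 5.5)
You are proving a different statement. In Proposition~\ref{prop mdp y_t} the process is $\mathcal{Y}_t$, with no superscript $n$. It is the Gaussian limit from Proposition~\ref{prop density fluc}, the solution of the linear SPDE, and its law does not depend on $n$; the $\P^n_{\rho_*}$ in the display should be read as the law of $\mathcal{Y}$. The claim is therefore a small-noise LDP for $\mathcal{Y}/a_n$. Your whole argument is about the microscopic field $\mu^n=\mathcal{Y}^n/a_n$: the expansion of $e^{-a_nf}L_ne^{a_nf}$, the quadratic term from \eqref{fourth term}, the replacement via Lemmas~\ref{lem: replacement} and \ref{lem flow}, and the logarithmic Sobolev control of $L_n^*\mathbf{1}$. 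So it is a sketch of Proposition~\ref{prop mdp fluc}, which the paper imports from \cite{zhao2026moderate}. Nothing in the proposal connects $\mathcal{Y}^n$ to $\mathcal{Y}$ at the level of probabilities of order $e^{-ca_n^2}$, and the convergence $\mathcal{Y}^n\Rightarrow\mathcal{Y}$ alone does not do this. Deducing the statement from Proposition~\ref{prop mdp fluc} would need an extra transference argument, for instance a diagonal argument over admissible sequences $a_n$ combined with the portmanteau theorem, and you do not give one. This is not a formality. In Lemma~\ref{lem finite upper bound} the proposition is used precisely to know that the \emph{Gaussian} vector $(Z^\varepsilon_{t_i}/a_n)_{i\le k}$ has rate function $\inf\{\mathcal{Q}_T(\mu):\int_0^{t_i}\langle\mu_s,J_\varepsilon\rangle ds=\gamma_i\}$. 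Uniqueness of rate functions then identifies this with $\frac12\gamma^{\rm T}\Sigma_\varepsilon^{-1}\gamma$.

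The intended proof is the exponential martingale method of \cite[Chapter 10]{klscaling} applied directly to the limit SPDE. There it is exact, and your ``main obstacle'' never arises; this is the paper's remark that no superexponential replacement lemma is needed. For $H\in\mc{C}^{1,\infty}([0,T]\times\T)$, $\ell_t(\mathcal{Y},H)$ is a continuous Gaussian martingale with deterministic bracket $\int_0^t\{2\chi(\rho_*)\|\nabla H_s\|^2_{L^2(\T)}+G(\rho_*)\|H_s\|^2_{L^2(\T)}\}ds$. Hence, with $\mu=\mathcal{Y}/a_n$,
\[
\exp\Big\{a_n^2\Big[\ell_T(\mu,H)-\chi(\rho_*)\|\nabla H\|^2_{L^2([0,T]\times\T)}-\frac{G(\rho_*)}{2}\|H\|^2_{L^2([0,T]\times\T)}\Big]\Big\}
\]
has expectation exactly one, with no error term. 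Likewise $E\big[e^{a_n\langle\mathcal{Y}_0,\phi\rangle}\big]=e^{a_n^2\chi(\rho_*)\|\phi\|^2_{L^2(\T)}/2}$ gives $\mathcal{Q}_0$ exactly. The upper bound then follows from exponential Chebyshev, the minimax argument on compacts, and exponential tightness (Gaussian moment bounds suffice). For the lower bound, tilt with the same martingale; this is a Cameron--Martin shift. Under the tilted law, $\mathcal{Y}/a_n$ concentrates, with Gaussian fluctuations of order $a_n^{-1}$, on the solution of $\partial_t\mu=(\Delta+F'(\rho_*))\mu-2\chi(\rho_*)\Delta H+G(\rho_*)H$. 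The relative entropy of the tilt is exactly $a_n^2\mathcal{Q}_T$ of that path, and the usual density argument finishes. Only $a_n\to\infty$ is used. The restrictions $a_n\ll\sqrt n$ and $|\lambda|\le\lambda_c$, which your microscopic argument needs, play no role here.
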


%\begin{remark}
%	In the last proposition, we only need $a_n \gg 1$.
%\end{remark}

\subsection{Exponential tightness}\label{subsec exp tight}
To prove the exponential tightness of the sequence of processes $\{\Gamma^n (t) / a_n, 0 \leq t \leq T\}_{n \geq 1}$ in $C([0,T],\R)$, it suffices to verify the following two conditions:
\begin{enumerate}[(i)]
	\item 
	\begin{equation*}
		\lim_{M \rightarrow \infty} \limsup_{n \rightarrow \infty} \frac{1}{a_n^2} \log \P^n_{\rho_*} \Big( \sup_{0 \leq t \leq T}  \big| \Gamma^n (t) \big| > a_n M \Big) = - \infty;
	\end{equation*}
	\item for any $\kappa > 0$,
	\begin{equation*}
		\lim_{\delta \rightarrow 0} \limsup_{n \rightarrow \infty} \frac{1}{a_n^2} \log \P^n_{\rho_*} \Big( \sup_{|t -s |\leq \delta}  \big| \Gamma^n (t) - \Gamma^n (s)\big| > a_n \kappa \Big) = - \infty.
	\end{equation*}
\end{enumerate}

Recall the functions $J$ and $J_\varepsilon$ introduced before Proposition \ref{prop occup time invar}. In \cite[Lemma 3.7]{zhao2026moderate}, the author has proved that for any $\varepsilon > 0$, the process $\{\int_0^t \<\mu^n_t,J_\varepsilon\>, 0 \leq t \leq T\}$ satisfies the estimates in conditions (i) and (ii). Using Lemma \ref{lem superexponential replacement} below, it follows immediately that the process $\{ \Gamma^n (t)/ a_n, 0 \leq t \leq T\}$ also satisfies the above two conditions, thus concluding the proof of the exponential tightness.

\begin{lemma}\label{lem superexponential replacement}
	For any $\delta > 0$,
	\[\lim_{\varepsilon \rightarrow 0} \limsup_{n \rightarrow \infty} \frac{1}{a_n^2} \log \P^n_{\rho_*} \Big(\sup_{0 \leq t \leq T} \Big| \Gamma^n (t)/a_n - \int_0^t \<\mu^n_s,J_\varepsilon\> ds\Big| > \delta\Big) = - \infty.\]
\end{lemma}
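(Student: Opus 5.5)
The plan is to prove a superexponential replacement lemma at the moderate-deviation scale. First write
\[
\Gamma^n(t)/a_n - \int_0^t \<\mu^n_s,J_\varepsilon\> ds = \frac{\sqrt n}{a_n}\int_0^t \sum_{x\in\T_n} \bar\eta_x(s)\Big(\delta_0(x) - \tfrac1n J_\varepsilon(\tfrac xn)\Big) ds =: \frac{\sqrt n}{a_n}\int_0^t \sum_x \bar\eta_x(s)\psi(x)\,ds,
\]
where $\psi=\delta_0-q$ with $q(x)=n^{-1}J_\varepsilon(x/n)$. Up to an error $O(1/n)$ in total mass, absorbed after renormalising $J_\varepsilon$, $q$ is a probability measure. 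Hence $\sum_x\psi(x)=0$. By the one-dimensional flow construction, there is an antisymmetric $\Phi$ on nearest-neighbour bonds with $\sum_y\Phi(x,y)=\psi(x)$. Explicitly, $\Phi(x,x+1)$ is a partial sum of $\psi$, so $|\Phi|\le 1$, it is supported on $|x|\le \varepsilon n$, and $\sum_{x}\Phi(x,x+1)^2\le C\varepsilon n$. Summation by parts gives
\[
\sum_x\bar\eta_x\psi(x)=\sum_x \Phi(x,x+1)(\eta_x-\eta_{x+1}).
\]
This puts the integrand in gradient form, and the exclusion Dirichlet form controls gradients.

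Next, handle $\sup_t$ by exponential Chebyshev. Discretize time on a mesh of size $n^{-K}$. Between mesh points the integrand is bounded by $\sqrt n/a_n$, so the discretization error is trivial. A union bound over polynomially many times costs only $\log n\ll a_n^2$, provided $a_n^2\gg\log n$. If one prefers to avoid this assumption, use instead a Doob-type argument with the exponential martingale. Then, for fixed $t$ and any $\theta>0$, bound $\P(\cdot>\delta)$ by
\[
e^{-\theta a_n^2\delta}\,\E^n_{\rho_*}\exp\Big\{\theta a_n\sqrt n\int_0^t\sum_x\Phi(x,x+1)(\eta_x-\eta_{x+1})ds\Big\}.
\]
Handle the initial measure and the nonstationarity via Feynman–Kac with respect to the product measure $\nu^n_{\rho_*}$. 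The initial measure is exactly $\nu^n_{\rho_*}$, and the adjoint correction $\frac12L_n^*\mathbf 1=\frac{\lambda}{2\rho_*}V_1(\mathbf 1,\eta)$ from \eqref{eqn13} appears. The log expectation is then at most
\[
t\sup_f\Big\{\int \theta a_n\sqrt n\sum_x\Phi(x,x+1)(\eta_x-\eta_{x+1})f\,d\nu + \int\tfrac12L_n^*\mathbf 1\,f\,d\nu - n^2 D^{\rm ex}(\sqrt f)-D^{\rm r}(\sqrt f)\Big\}.
\]

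For the gradient term, use the standard change of variables $\eta\mapsto\eta^{x,x+1}$ and Cauchy–Schwarz:
\[
\int(\eta_x-\eta_{x+1})f\,d\nu\le \frac{A}{2} \int(\sqrt{f(\eta^{x,x+1})}-\sqrt f)^2d\nu+\frac{C}{A}.
\]
Choosing $A$ so that the total Dirichlet cost is at most $\frac{n^2}{2}D^{\rm ex}$ bounds this term by $C\theta^2a_n^2 n\cdot n^{-2}\sum_x\Phi^2\le C\theta^2a_n^2\varepsilon$. This is what makes the $\varepsilon$-dependence work: one gets $\frac1{a_n^2}\log\P\le-\theta\delta+Ct\theta^2\varepsilon+\frac{t}{a_n^2}R_n$, where $R_n$ is the supremum of the $L_n^*\mathbf 1$ part.

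The main obstacle is $R_n$, the quadratic term $\frac{\lambda}{2\rho_*}V_1(\mathbf 1,\eta)$, which is not small a priori. Here the remaining half of $n^2D^{\rm ex}$ is used together with the Glauber Dirichlet form $D^{\rm r}$, which satisfies a logarithmic Sobolev inequality $\int f\log f\,d\nu\le C_{LS}D^{\rm r}(\sqrt f)$ uniformly in $n$, because flip rates are bounded below. By the entropy inequality, $\int V_1 f\le \gamma^{-1}\{H(f)+\log E_\nu e^{\gamma V_1}\}$. Taking $\gamma$ a small constant gives $\log E_\nu e^{\gamma V_1}\le C\gamma^2 n$. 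That is too big in general, so first perform the replacement $V_1\to V_1^\ell$ via Lemma \ref{lem: replacement} using $n^2D^{\rm ex}$, costing $C\ell g_1(\ell)n\cdot n^{-2}\cdot n$. Then use sub-Gaussian blocks, which give $\log E e^{\gamma V_1^\ell}\le C n/\ell$ for $\gamma\sim\ell$. With $\ell\sim\sqrt n$ both are $O(\sqrt n)$, and the entropy part is absorbed by $D^{\rm r}$ when $\lambda$ is small; this is where $\lambda_c$ in \eqref{lambda c condition} is chosen. Thus $R_n\le C\sqrt n$. Since $a_n^2$ may be smaller than $\sqrt n$, this is the delicate point. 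If $R_n=O(\sqrt n)$ is not enough, I would instead compare with the true invariant-type reference, or absorb $V_1$ using the full log-Sobolev constant to show the supremum is $\le C\lambda\cdot H$ and hence nonpositive for small $\lambda$. The expected conclusion is $R_n\le 0$ for $|\lambda|\le\lambda_c$, giving $\frac1{a_n^2}\log\P\le -\theta\delta+Ct\theta^2\varepsilon$. Letting $n\to\infty$, then $\varepsilon\to0$, then $\theta\to\infty$ yields $-\infty$. The lower tail is handled symmetrically with $-\theta$.
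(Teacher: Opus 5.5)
Your architecture is the paper's: the gradient form of $\delta_0-q$, Feynman--Kac with the correction $\frac12L_n^*\mathbf 1$, change of variables plus Young for the gradient term, and replacement, sub-Gaussian blocks and the Glauber log-Sobolev inequality for $L_n^*\mathbf 1$. Your gradient bound $C\theta^2a_n^2\varepsilon$ is correct. The argument still does not close, and it fails at the step you flag yourself.

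\textbf{Main gap: the bound on $R_n$.} Since $a_n\to\infty$ arbitrarily slowly, you need $R_n$ bounded uniformly in $n$. Your choice $\ell\sim\sqrt n$ gives only $R_n\le C\sqrt n$. The fallback ``$R_n\le 0$'' is false for $\lambda\neq0$:
\begin{itemize}
\item taking $f=\mathbf 1$ gives $R_n\ge 0$;
\item replacing $\sqrt f=\mathbf 1$ by the normalized $\mathbf 1+th$, with $h=\frac12L_n^*\mathbf 1$ (nonzero and of $\nu^n_{\rho_*}$-mean zero), makes the functional equal to $2t\|h\|^2_{L^2(\nu^n_{\rho_*})}+O(t^2)>0$ for small $t>0$.
\end{itemize}
So no absorption argument gives $R_n\le0$; what is true, and sufficient, is $R_n\le C$.

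The missing idea is that the block can be taken macroscopic. Your count of the replacement error hides this. The variance of $h^\ell_{y,z}(\mathbf 1,\cdot)$ under $\nu^n_{\rho_*}$ is at most $C_0g_1(\ell)=C_0\ell$ by Lemma \ref{lem flow}, so $\sum_{y,z}h^2$ is of order $n\ell$, not $n\ell^2$. The error term $\frac{\lambda^2}{2\rho_*^2n^2}\int\sum_{y,z}h^\ell_{y,z}(\mathbf 1,\eta)^2f\,d\nu^n_{\rho_*}$ in \eqref{ln 1 replacement} should itself be controlled by the entropy inequality, H\"older over $2\ell+1$ independent classes, and the log-Sobolev inequality \eqref{log sob ineq}. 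With entropy parameter of order $n^2/\ell^2$, this gives
\[C\lambda^2\ell^2n^{-2}\bigl\{\kappa\int\Gamma^{\rm r}_n(\sqrt f)\,d\nu^n_{\rho_*}+n/\ell\bigr\},\]
while the block term $V^\ell$ gives $C\lambda\{\kappa\int\Gamma^{\rm r}_n(\sqrt f)\,d\nu^n_{\rho_*}+n/\ell\}$. With $\ell=\lfloor n/8\rfloor$, as in the paper, each is a constant plus a small multiple of the Glauber Dirichlet form. For $|\lambda|\le\lambda_c$ that multiple is absorbed by \eqref{lambda c condition}. This is exactly \eqref{Ln* 1}, i.e.\ $R_n\le C_8$, and it produces the final bound $C(M^2\varepsilon+a_n^{-2})$.

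\textbf{Secondary gap: the supremum over time.} Your union bound over a polynomial time mesh costs $\log n/a_n^2$. This needs $a_n^2\gg\log n$, which is not among the hypotheses $1\ll a_n\ll\sqrt n$. Even the coarsest usable mesh, of size about $a_n/\sqrt n$, still costs $\frac12\log n$. The ``Doob-type argument'' you offer as a substitute is not spelled out.

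The paper instead uses the Garsia--Rodemich--Rumsey inequality with $\varphi(u)=u^{2q}$ and $p(u)=\sqrt u$, together with the convex function $f_q$. This reduces the problem to exponential moments of $(t-s)^{-1/2}\int_s^t(\cdots)\,d\tau$. Feynman--Kac then produces a factor $t-s$ that cancels the $(t-s)^{-1}$ coming from the square of the normalization. The resulting bound is uniform in $s,t$ and works for any $a_n\to\infty$. A dyadic chaining argument would serve equally well.
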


\begin{proof}
	Observe that
	\[\frac{\sqrt{n}}{a_n} \bar{\eta}_0 (t) - \<\mu^n_t,J_\varepsilon\> = \frac{1}{a_n \sqrt{n}} \sum_{x \in \T_n} (\eta_0 (t) - \eta_x (t)) J_\varepsilon (x/n) + \frac{\sqrt{n}}{a_n} \mathcal{O} (n^{-1}),\]
	where the term $\mathcal{O} (n^{-1})$ comes from replacing $n^{-1} \sum_{x} J_\varepsilon (x/n)$ by one. Since $a_n \gg 1$, it suffices to prove that
	\begin{equation*}
		\lim_{\varepsilon \rightarrow 0} \limsup_{n \rightarrow \infty} \frac{1}{a_n^2} \log \P^n_{\rho_*} \Big(\sup_{0 \leq t \leq T} \Big| \int_0^t \frac{1}{a_n \sqrt{n}} \sum_{x \in \T_n} (\eta_0 (s) - \eta_x (s)) J_\varepsilon (x/n) ds \Big| > \delta\Big) = - \infty.
	\end{equation*}
	By Markov's inequality, we only need to prove that, for any $M > 0$,
	\begin{equation}\label{replacement 1}
		\lim_{\varepsilon \rightarrow 0} \limsup_{n \rightarrow \infty} \frac{1}{a_n^2} \log \E^n_{\rho_*} \Big[ \exp \Big\{ \sup_{0 \leq t \leq T} \Big| \int_0^t \frac{a_n M}{ \sqrt{n}} \sum_{x \in \T_n} (\eta_0 (s) - \eta_x (s)) J_\varepsilon (x/n) ds \Big| \Big\} \Big] = 0.
	\end{equation}
	
	Next, we want to remove the supermum over time inside the exponential. To this end, we recall the Garsia-Rodemich-Rumsey inequality, see \cite[page 182]{klscaling} for example.
	
	\begin{lemma}[Garsia-Rodemich-Rumsey inequality]\label{lem GRR inequality}
	Assume that $g: [0,T] \rightarrow \R$ is continuous and $g(0) = 0$, and that $\varphi (u), p(u)$ are strictly increasing functions such that
	\[\varphi (0) = p(0) = 0, \quad \lim_{u \rightarrow + \infty} \varphi (u) = + \infty.\]
	Then,
	\[\sup_{0 \leq t \leq T} |g(t)| \leq 8 \int_0^T \varphi^{-1} \big(4B/u^2\big) p(du),\]
	where
	\[B= \int_0^T ds \int_0^T dt \varphi \Big(\frac{|g(t)-g(s)|}{p(|t-s|)}\Big).\]
	\end{lemma}
	
	In the last lemma, we take 
	\begin{align*}
		g (t) = \int_0^t \frac{a_n M}{ \sqrt{n}} \sum_{x \in \T_n} (\eta_0 (s) - \eta_x (s)) J_\varepsilon (x/n) ds, \quad  \varphi (u) = u^{2q}, \quad p(u) = \sqrt{u},
	\end{align*}
	where $q > 2$ is fixed. Then, the supermum inside the exponential of \eqref{replacement 1} is bounded by
	\[C_1(q) \int_0^T (B/u^2)^{1/2q} u^{-1/2} du \leq C_2(q,T) B^{1/2q},  \]
	where 
	\[B = \int_0^T ds \int_0^T dt \Big| \frac{1}{\sqrt{|t-s|}} \int_s^t \frac{a_n M}{ \sqrt{n}} \sum_{x \in \T_n} (\eta_0 (\tau) - \eta_x (\tau)) J_\varepsilon (x/n) d\tau \Big|^{2q}.\]
	Introduce the function
	\[f_q (x) = \exp \Big\{  \Big((2q-1)^{2q} +x\Big)^{1/2q} \Big\}, \quad x \geq 0.\]
	One could check directly that $f_q$ is convex and that $f_q (x) \leq \exp \{2q-1 +x^{1/2q}\}$, see \cite[page 85]{zhao2025moderate} for example.  Thus,
	\begin{align*}
		\exp &\{C_2(q,T) B^{1/2q}\} \\
		&\leq f_q (C_2(q,T)^{2q} B) \\ &\leq \frac{1}{T^2} \int_0^T ds \int_0^T dt f_q \Big(C_2(q,T)^{2q} T^{2} \Big| \frac{1}{\sqrt{|t-s|}} \int_s^t \frac{a_n M}{ \sqrt{n}} \sum_{x \in \T_n} (\eta_0 (\tau) - \eta_x (\tau)) J_\varepsilon (x/n) d\tau \Big|^{2q} \Big)\\
		& \leq \frac{C_3(q)}{T^2} \int_0^T ds \int_0^Tdt \exp \Big\{ C_4(q,T) \Big| \frac{1}{\sqrt{|t-s|}} \int_s^t \frac{a_n M}{ \sqrt{n}} \sum_{x \in \T_n} (\eta_0 (\tau) - \eta_x (\tau)) J_\varepsilon (x/n) d\tau \Big| \Big\}.
	\end{align*}
	Thus, we only need to show that 
	\begin{multline}\label{eqn 5.3}
	\lim_{\varepsilon \rightarrow 0} \limsup_{n \rightarrow \infty} \frac{1}{a_n^2} \log \Big( \frac{C_3(q)}{T^2}  \int_0^T ds \int_0^Tdt\\  \E^n_{\rho_*} \Big[ \exp \Big\{  C_4(q,T) \Big| \frac{1}{\sqrt{|t-s|}} \int_s^t \frac{a_n M}{ \sqrt{n}} \sum_{x \in \T_n} (\eta_0 (\tau) - \eta_x (\tau)) J_\varepsilon (x/n) d\tau \Big| \Big\}\Big] \Big) =0.
	\end{multline}
	
	Using the basic inequality $e^{|x|} \leq e^{x} + e^{-x}$, we can  remove the absolute value inside the exponential. Then, by the nonequilibrium version of Feynman-Kac formula (see \cite[Proposition 4.3]{gonccalves2024clt} for example), we bound the logarithm of the last expectation by, for $s < t$,
	\[(t-s) \sup_{f: \nu^n_{\rho_*}{\rm-density}} \Big\{ \int \Big( \frac{C_4 a_n M}{ \sqrt{n} \sqrt{t-s}} \sum_{x \in \T_n} (\eta_0 - \eta_x) J_\varepsilon (x/n) + \frac{1}{2} L_n^* \mathbf{1} \Big) f d \nu^n_{\rho_*} - \int \Gamma_n (\sqrt{f}) d \nu_{\rho_*}^n  \Big\}. \]
	By first writing $\eta_0 - \eta_x = \sum_{y=0}^{x-1} (\eta_y - \eta_{y+1})$, then making the transformation $\eta \mapsto \eta^{y,y+1}$, and finally using Young's inequality, for any $\gamma > 0$, 
	\begin{align*}
		\int& \sum_{x \in \T_n} (\eta_0 - \eta_x) J_\varepsilon (x/n) f(\eta) d \nu_{\rho_*}^n \\
		&=\int \sum_{x \in \T_n} \sum_{y=0}^{x-1}(\eta_y - \eta_{y+1}) J_\varepsilon (x/n) f (\eta)d \nu_{\rho_*}^n \\
		&= \frac{1}{2} \int \sum_{x \in \T_n} \sum_{y=0}^{x-1}(\eta_y - \eta_{y+1}) J_\varepsilon (x/n) \big( f (\eta) - f(\eta^{y,y+1}) \big) d \nu_{\rho_*}^n \\
		&\leq \frac{\gamma}{8} \int \sum_{x \in \T_n} \sum_{y=0}^{x-1} (\eta_y - \eta_{y+1})^2 J_\varepsilon (x/n)^2 \big( \sqrt{f (\eta)} + \sqrt{f(\eta^{y,y+1})} \big)^2 d \nu_{\rho_*}^n \\
		&\qquad + \frac{1}{2 \gamma} \int \sum_{|x| < \varepsilon n} \sum_{y=0}^{x-1}  \big( \sqrt{f (\eta)} - \sqrt{f(\eta^{y,y+1})} \big)^2 d \nu_{\rho_*}^n\\
		&\leq C \gamma \varepsilon n^2 \|J_\varepsilon\|_{L^2 (\R)}^2 + \frac{\varepsilon n}{\gamma} \int \Gamma_n^{\rm ex} (\sqrt{f}) d \nu_{\rho_*}^n
	\end{align*}
	Taking $\gamma = 2 C_4 \varepsilon a_n M / (\sqrt{t-s}  n^{3/2})$, we bound 
	\[\int \frac{C_4 a_n M}{ \sqrt{n} \sqrt{t-s}} \sum_{x \in \T_n} (\eta_0 - \eta_x) J_\varepsilon (x/n)  f d \nu^n_{\rho_*} \leq \frac{C_5 (q,T) \varepsilon^2 a_n^2 M^2}{t-s} \|J_\varepsilon\|_{L^2 (\R)}^2+ \frac{n^2}{2} \int \Gamma_n^{\rm ex} (\sqrt{f}) d \nu_{\rho_*}^n.\]
	
Next, we bound the term $L_n^* \mathbf{1}$. From \eqref{eqn13} and \eqref{ln 1 replacement}, we have
\begin{align}
	\int \frac{1}{2} L_n^* \mathbf{1} (\eta) f(\eta) d \nu_{\rho_*}^n \leq& \frac{n^2}{4} \int \Gamma_n^{\rm ex} (\sqrt{f}) d \nu_{\rho_*}^n \notag\\
	&+ \frac{\lambda^2}{2d^2 \rho_*^2 n^2}\int \sum_{y,z \in \T_n^d \atop |y-z|=1} h^\ell_{y,z} (\mathbf{1},\eta)^2 f(\eta) d \nu_{\rho_*}^n\label{eqn5.4}\\
	&+ \frac{\lambda}{2d\rho_{*}} \int \sum_{i=1}^d V_i^\ell (\mathbf{1},\eta) f(\eta) d \nu_{\rho_*}^n.\label{eqn5.5}
\end{align}
Here, we present the calculations for dimension $d$ to explain why the methods only work in dimension one. By entropy inequality and H{\"o}lder's inequality, we bound \eqref{eqn5.4} by, for any $\gamma > 0$, 
\begin{multline*}
\frac{H(f | \nu_{\rho_*}^n)}{\gamma} + \frac{1}{\gamma} \log E_{\nu_{\rho_*}^n} \Big[\exp \Big\{  \frac{\gamma \lambda^2}{2d^2 \rho_*^2 n^2} \sum_{y,z \in \T_n^d \atop |y-z|=1} h^\ell_{y,z} (\mathbf{1},\eta)^2 \Big\}\Big]\\
\leq \frac{H(f | \nu_{\rho_*}^n)}{\gamma} + \frac{1}{\gamma (2\ell+1)^d} \sum_{y,z \in \T_n^d \atop |y-z|=1} \log E_{\nu_{\rho_*}^n} \Big[\exp \Big\{  \frac{\gamma \lambda^2 (2\ell+1)^d}{2d^2 \rho_*^2 n^2} h^\ell_{y,z} (\mathbf{1},\eta)^2 \Big\}\Big].
\end{multline*}
The Logarithmic Sobolev inequality for the Glauber dynamics states that there exists $\kappa (\rho_{*}) > 0$ such that
\begin{equation}\label{log sob ineq}
H(f | \nu_{\rho_*}^n) \leq \kappa (\rho_{*}) \int \Gamma_n^{\rm r} (\sqrt{f}) d \nu_{\rho_*}^n,
\end{equation}
see \cite[Lemma 3.1]{gonccalves2024clt} for example.  As in the proof of \eqref{h ell 1}, choosing $\gamma \lambda^2 (2\ell+1)^d g_d (\ell) = 2 \varepsilon_0 d^2 \rho_*^2 n^2$ for some small $\varepsilon_0 > 0$, we bound \eqref{eqn5.4} by
\[\frac{C_6 (d,\rho_*,\varepsilon_0) \lambda^2 \ell^d g_d (\ell)}{n^2} \Big( \kappa (\rho_{*}) \int \Gamma_n^{\rm r} (\sqrt{f}) d \nu_{\rho_*} + \frac{n^d}{\ell^d}\Big).\]
Similarly, we bound \eqref{eqn5.5} by, for any $\gamma > 0$,
\[\frac{1}{\gamma} \Big(\kappa (\rho_{*}) \int \Gamma_n^{\rm r} (\sqrt{f}) d \nu_{\rho_*} + \frac{1}{d(2\ell+1)^d} \sum_{i=1}^d \sum_{x \in \T_n^d} \log E_{\nu_{\rho_*}^n} \Big[\exp \Big\{  \frac{\gamma \lambda (2\ell+1)^d}{2 \rho_{*}} \overleftarrow{\eta}^\ell_x \overrightarrow{\eta}^\ell_{x+e_i} \Big\}\Big] \Big).\]
Choosing $\gamma \lambda = 2 \rho_* \varepsilon_0$ for some small $\varepsilon_0 > 0$, the last line is bounded by 
\[C_7 (d,\rho_*,\varepsilon_0)  \lambda \Big(\kappa (\rho_{*}) \int \Gamma_n^{\rm r} (\sqrt{f}) d \nu_{\rho_*} + \frac{n^d}{\ell^d} \Big).\]
In $d=1$, taking $\ell = \lfloor n/8 \rfloor$ and $\lambda_c$ small enough such that
\begin{equation}\label{lambda c condition}
C_6 \lambda_c^2 \kappa / 64 + C_7 \lambda_c \kappa < 1/2,
\end{equation}
we bound
\begin{equation}\label{Ln* 1}
\int \frac{1}{2} L_n^* \mathbf{1} (\eta) f(\eta) d \nu_{\rho_*}^n  \leq \frac{n^2}{4} \int \Gamma_n^{\rm ex} (\sqrt{f}) d \nu_{\rho_*}^n + \frac{1}{2} \int \Gamma_n^{\rm r} (\sqrt{f}) d \nu_{\rho_*}^n + C_8 (d,\rho_*,\varepsilon_0).
\end{equation}

	Adding up the above estimates, the expression in \eqref{eqn 5.3} is bound by \[C_9 (d,\rho_*,\varepsilon_0,T)  \big(M^2 \varepsilon^2 \|J_\varepsilon\|_{L^2 (\R)}^2 + a_n^{-2}\big).\] 
	This concludes the proof since $a_n \gg 1$ and $\varepsilon^2 \|J_\varepsilon\|_{L^2 (\R)}^2 \leq C \varepsilon$.
\end{proof}

\subsection{Finite dimensional upper bound}\label{subsec upper mdp}	In this subsection, we prove moderate deviation upper bound for the occupation time in the sense of finite dimensional distributions. For any integer $k > 0$, fix a sequence of times $t_1 < t_2 < \ldots < t_k$. 

\begin{lemma}\label{lem finite upper bound}
	Let the subsets $F_i \subset \R$, $1 \leq i \leq k$, be closed. Then,
	\[\limsup_{n \rightarrow \infty} \frac{1}{a_n^2} \log \P^n_{\rho_*} \Big( \frac{1}{a_n} \Gamma^n (t_i) \in F_i, 1 \leq i \leq k\Big) \leq - \inf \big\{  \frac{1}{2} \gamma^{\rm T} \Sigma^{-1} \gamma: \gamma_i \in F_i, 1 \leq i \leq k\big\}.\]
	In this formula, $\gamma = (\gamma_1, \gamma_2, \ldots, \gamma_k)^{\rm T}$, and $\Sigma = \Sigma_{t_1,\ldots,t_k}$ is the $k \times k$ matrix with the $(i,j)$-th entry equal to $\alpha (t_i,t_j)$ defined in \eqref{alpha def}. 
\end{lemma}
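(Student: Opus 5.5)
The plan is to pass from the occupation time to the fluctuation field via Lemma \ref{lem superexponential replacement}, apply the contraction principle to the MDP of Proposition \ref{prop mdp fluc}, and then identify the contracted rate function through the Gaussian comparison of Proposition \ref{prop mdp y_t}.

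\emph{Step 1 (continuous projection).} Fix $\varepsilon>0$ and consider the map $\Phi_\varepsilon:\mu\mapsto \big(\int_0^{t_i}\<\mu_s,J_\varepsilon\>ds\big)_{1\le i\le k}$ from $D([0,T],\mathcal D'(\T))$ to $\R^k$. This map is continuous in the uniform weak topology because $J_\varepsilon$ is smooth; the paper implicitly treats $J_\varepsilon$ as periodised on $\T$, which is fine for small $\varepsilon$. By Proposition \ref{prop mdp fluc} and the contraction principle, $\Phi_\varepsilon(\mu^n)$ satisfies an MDP with rate $I_\varepsilon(\gamma)=\inf\{\mathcal Q_T(\mu):\Phi_\varepsilon(\mu)=\gamma\}$.

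\emph{Step 2 (identify $I_\varepsilon$).} By Proposition \ref{prop mdp y_t}, the same contraction applied to $\mathcal Y/a_n$ gives an LDP with the same rate $I_\varepsilon$. On the other hand, $\Phi_\varepsilon(\mathcal Y)$ is a centred Gaussian vector with covariance $\Sigma^\varepsilon_{ij}=\mathrm{Cov}(Z^\varepsilon_{t_i},Z^\varepsilon_{t_j})$. The LDP for $X/a_n$ with $X$ Gaussian has rate $\frac12\gamma^{\rm T}(\Sigma^\varepsilon)^{-1}\gamma$ (with value $+\infty$ off the range if $\Sigma^\varepsilon$ is singular). Since rate functions are unique, $I_\varepsilon(\gamma)=\frac12\gamma^{\rm T}(\Sigma^\varepsilon)^{-1}\gamma$.

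\emph{Step 3 (exponential approximation).} Lemma \ref{lem superexponential replacement} states that $\Gamma^n(t_i)/a_n$ and $\Phi_\varepsilon(\mu^n)_i$ are exponentially good approximations as $\varepsilon\to0$. For closed $F=\prod F_i$ and $\delta>0$, let $F^\delta$ be the closed $\delta$-neighbourhood. Then
\begin{align*}
\P^n_{\rho_*}\big(\Gamma^n(t_i)/a_n\in F_i\ \forall i\big)\le \P^n_{\rho_*}\big(\Phi_\varepsilon(\mu^n)\in F^\delta\big)+\P^n_{\rho_*}\Big(\sup_{t}\Big|\Gamma^n(t)/a_n-\int_0^t\<\mu^n_s,J_\varepsilon\>ds\Big|>\delta\Big).
\end{align*}
Taking $\limsup\frac1{a_n^2}\log$, then $\varepsilon\to0$, gives the bound $-\liminf_{\varepsilon\to0}\inf_{F^\delta}\frac12\gamma^{\rm T}(\Sigma^\varepsilon)^{-1}\gamma$. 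Proposition \ref{prop occup time invar}(i) gives $Z^\varepsilon\to Z$, a Gaussian limit. I will use convergence of covariances, which follows from $L^2$ convergence; the Gaussian structure makes that easy to verify directly. Hence $\Sigma^\varepsilon\to\Sigma$.

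\emph{Step 4 (the main obstacle).} The remaining work is passing to the limit $\varepsilon\to0$ and then $\delta\to0$ inside the infimum. I will use the variational form $\frac12\gamma^{\rm T}(\Sigma^\varepsilon)^{-1}\gamma=\sup_{\theta\in\R^k}\{\theta\cdot\gamma-\frac12\theta^{\rm T}\Sigma^\varepsilon\theta\}$. This form is valid even for singular $\Sigma^\varepsilon$, and it is jointly lower semicontinuous in $(\gamma,\Sigma)$. It gives $\liminf_{\varepsilon}\inf_{F^\delta}\ge\inf_{F^\delta}\frac12\gamma^{\rm T}\Sigma^{-1}\gamma$, up to a compactness argument: the quadratic forms are uniformly coercive when $\Sigma$ is nondegenerate, so minimisers stay in a bounded set. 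If $\Sigma$ is degenerate, I will handle it with the same sup-formula, restricting $\theta$ to a bounded ball first and then letting the ball grow. Finally, letting $\delta\to0$ uses lower semicontinuity and the goodness (compact level sets) of $\frac12\gamma^{\rm T}\Sigma^{-1}\gamma$, which together give $\inf_{F^\delta}\to\inf_F$. This yields the claimed upper bound.
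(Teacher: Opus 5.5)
Your proposal is correct and follows the same route as the paper: superexponential replacement via Lemma \ref{lem superexponential replacement}, contraction of the fluctuation-field MDP, identification of the contracted rate function with $\frac12\gamma^{\rm T}(\Sigma^\varepsilon)^{-1}\gamma$ through the Gaussian LDP for $\mathcal{Y}/a_n$ and uniqueness of rate functions, and then the limits $\varepsilon\to0$ and $\delta\to0$. The one difference is in that last step: the paper asserts that $F_1^\delta\times\cdots\times F_k^\delta$ is compact (false for unbounded $F_i$) in order to extract convergent minimisers, whereas your uniform coercivity bound, together with joint lower semicontinuity of $\sup_\theta\{\theta\cdot\gamma-\frac12\theta^{\rm T}\Sigma\theta\}$, supplies exactly that missing boundedness, so your version is the more careful one (and the coercivity only needs $\sup_\varepsilon\|\Sigma^\varepsilon\|<\infty$, not nondegeneracy of $\Sigma$).
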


\begin{proof}
	For any $\delta > 0$ and any set $F \subset \R$, let $F^\delta$ be the $\delta$-dilation   of the set $F$, that is, $F^\delta = \{x: |x-y| \leq \delta\;\text{for some}\; y \in F\}$. By Lemma \ref{lem superexponential replacement}, Proposition \ref{prop mdp fluc} and the contraction principle, the $\limsup$ in the lemma is bounded by
	\begin{multline}\label{upper 1}
		\liminf_{\varepsilon \rightarrow 0} \;\limsup_{n \rightarrow \infty} \frac{1}{a_n^2} \log \P^n_{\rho_*} \Big( \int_0^{t_i} \<\mu^n_s,J_\varepsilon\> ds \in F_i^\delta, \;1 \leq i \leq k\Big)\\
		\leq - \sup_{\delta > 0}\; \limsup_{\varepsilon \rightarrow 0}\; \inf \Big\{ \mathcal{Q}_T (\mu):\;\int_0^{t_i} \<\mu_s,J_\varepsilon\> ds \in F_i^\delta, \;1 \leq i \leq k\Big\}.
	\end{multline}
	By Proposition \ref{prop mdp y_t} and the contraction principle, the sequence of vectors 
	$\big\{ Z^\varepsilon_{t_i}/a_n, \; 1 \leq i \leq k\big\}$
	satisfies the large deviation principle with decay rate $a_n^2$ and rate function 
	\[I^{(k)} (\gamma) := \inf \Big\{ \mathcal{Q}_T (\mu):\;\int_0^{t_i} \<\mu_s,J_\varepsilon\> ds = \gamma_i, \; 1 \leq i \leq k\Big\}\]
	for $\gamma \in \R^k$. Since the sequence of vectors $\big\{ Z^\varepsilon_{t_i}/a_n, \; 1 \leq i \leq k\big\}_{n \geq 1}$ is Gaussian, it also satisfies the large deviation principle with rate function $(1/2) \gamma^{\rm T} \Sigma_{\varepsilon}^{-1} \gamma$, where  $\Sigma_{\varepsilon}$ is the $k \times k$ matrix with the $(i,j)$-th entry equal to
	\[\alpha^\varepsilon (t_i,t_j) = {\rm Cov} (Z^{\varepsilon}_{t_i}, Z^\varepsilon_{t_j}).\]
	By uniqueness of the rate function, $I^{(k)} (\gamma) = (1/2) \gamma^{\rm T} \Sigma_{\varepsilon}^{-1} \gamma$.  By \eqref{upper 1}, we bound the $\limsup$ in  the lemma by
	\[- \sup_{\delta > 0} \limsup_{\varepsilon \rightarrow 0} \inf_{\gamma \in F_1^\delta \times \cdots \times F_k^\delta} \frac{1}{2}  \gamma^{\rm T} \Sigma_{\varepsilon}^{-1} \gamma.\]
	Since $F_1^\delta \times \cdots \times F_k^\delta$ is compact, for any $\varepsilon > 0$, let $\gamma_\varepsilon \in F_1^\delta \times \cdots \times F_k^\delta$ be such that
	\[\inf_{\gamma \in F_1^\delta \times \cdots \times F_k^\delta} \frac{1}{2}  \gamma^{\rm T} \Sigma_{\varepsilon}^{-1} \gamma = \frac{1}{2}  \gamma_{\varepsilon}^{\rm T} \Sigma_{\varepsilon}^{-1} \gamma_\varepsilon.\]
	Using the compactness of the set $F_1^\delta \times \cdots \times F_k^\delta$ again, there exists a subsequence $\{\varepsilon^\prime\}$ of $\{\varepsilon\}$ and a vector $\gamma_* \in F_1^\delta \times \cdots \times F_k^\delta$ such that $\lim_{\varepsilon^\prime \rightarrow 0} \gamma_{\varepsilon^\prime} = \gamma_*$.  Since $\lim_{\varepsilon \rightarrow 0} \alpha^\varepsilon (t_i,t_j)  = \alpha (t_i,t_j)$ by Proposition \ref{prop occup time invar}, we have that
	\begin{align*}
		\limsup_{\varepsilon \rightarrow 0} \inf_{\gamma \in F_1^\delta \times \cdots \times F_k^\delta} \frac{1}{2}  \gamma^{\rm T} \Sigma_{\varepsilon}^{-1} \gamma \geq \lim_{\varepsilon^\prime \rightarrow 0} \frac{1}{2}  \gamma_{\varepsilon^\prime}^{\rm T} \Sigma_{\varepsilon^\prime}^{-1} \gamma_{\varepsilon^\prime} = \frac{1}{2} \gamma_* \Sigma^{-1} \gamma_* \geq \inf_{\gamma \in F_1^\delta \times \cdots \times F_k^\delta} \frac{1}{2}  \gamma^{\rm T} \Sigma^{-1} \gamma.
	\end{align*}
	Thus, the $\limsup$ in the lemma is bounded from above by 
	\[- \sup_{\delta > 0} \inf_{\gamma \in F_1^\delta \times \cdots \times F_k^\delta} \frac{1}{2}  \gamma^{\rm T} \Sigma^{-1} \gamma = -\inf_{\gamma \in F_1 \times \cdots \times F_k} \frac{1}{2}  \gamma^{\rm T} \Sigma^{-1} \gamma,\]
	which concludes the proof.
\end{proof}

\subsection{Finite dimensional lower bound}\label{subsec lower mdp} In this subsection, we prove the following finite dimensional moderate deviation lower bound for the occupation time.

\begin{lemma}\label{lem finite lower bound}
	Let the subsets $O_i \subset \R$, $1 \leq i \leq k$, be open. Then,
	\[\liminf_{n \rightarrow \infty} \frac{1}{a_n^2} \log \P^n_{\rho_*} \Big( \Gamma^n (t_i) \in O_i, 1 \leq i \leq k\Big) \geq - \inf \big\{  \frac{1}{2} \gamma^{\rm T} \Sigma^{-1} \gamma: \gamma_i \in O_i, 1 \leq i \leq k\big\},\]
	where $\Sigma$ is the $k \times k$ matrix defined in Lemma \ref{lem finite upper bound}. 
\end{lemma}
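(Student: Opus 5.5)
The lower bound mirrors the proof of Lemma \ref{lem finite upper bound}, with open sets shrunk where that proof used closed sets dilated. (The statement should read $\Gamma^n(t_i)/a_n\in O_i$, as in the upper bound.) It suffices to show that for every $\gamma=(\gamma_1,\ldots,\gamma_k)^{\rm T}$ with $\gamma_i\in O_i$,
\[\liminf_{n\to\infty}\frac{1}{a_n^2}\log\P^n_{\rho_*}\Big(\tfrac{1}{a_n}\Gamma^n(t_i)\in O_i,\,1\le i\le k\Big)\ge -\tfrac12\gamma^{\rm T}\Sigma^{-1}\gamma.\]
Fix such a $\gamma$ and choose $\delta>0$ with $(\gamma_i-3\delta,\gamma_i+3\delta)\subset O_i$. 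Set $U_i=(\gamma_i-2\delta,\gamma_i+2\delta)$. We then have the inclusion
\[\Big\{\int_0^{t_i}\<\mu^n_s,J_\varepsilon\>ds\in U_i\ \forall i\Big\}\subset\Big\{\tfrac{\Gamma^n(t_i)}{a_n}\in O_i\ \forall i\Big\}\cup\Big\{\sup_{t\le T}\Big|\tfrac{\Gamma^n(t)}{a_n}-\int_0^t\<\mu^n_s,J_\varepsilon\>ds\Big|>\delta\Big\}.\]
Using $\P(A)\ge\P(B)-\P(C)$, Lemma \ref{lem superexponential replacement} shows the last event has probability $e^{-Ka_n^2}$ with $K$ arbitrarily large, once $\varepsilon$ is small. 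It is therefore negligible, provided the lower bound for the first event is finite and uniform in small $\varepsilon$.

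For the first event, the map $\mu\mapsto(\int_0^{t_i}\<\mu_s,J_\varepsilon\>ds)_i$ is continuous on $D([0,T],\mathcal D'(\T))$ under the uniform weak topology. The preimage of $U_1\times\cdots\times U_k$ is therefore open, and the lower bound of Proposition \ref{prop mdp fluc} gives
\[\liminf_n\frac1{a_n^2}\log\P^n_{\rho_*}(\cdots)\ge-\inf\{I^{(k)}_\varepsilon(\beta):\beta\in U_1\times\cdots\times U_k\}.\]
As in the upper bound proof, comparing with Proposition \ref{prop mdp y_t} and the Gaussian LDP for $Z^\varepsilon/a_n$ identifies $I^{(k)}_\varepsilon(\beta)=\frac12\beta^{\rm T}\Sigma_\varepsilon^{-1}\beta$. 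Evaluating at $\beta=\gamma$ gives the bound $-\frac12\gamma^{\rm T}\Sigma_\varepsilon^{-1}\gamma$.

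Finally, let $\varepsilon\to0$. Proposition \ref{prop occup time invar}(i) gives $\Sigma_\varepsilon\to\Sigma$ entrywise, so $\gamma^{\rm T}\Sigma_\varepsilon^{-1}\gamma\to\gamma^{\rm T}\Sigma^{-1}\gamma$. This yields the claim with constants uniform in small $\varepsilon$, which justifies discarding the error event.

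The main obstacle is invertibility of $\Sigma$ and $\Sigma_\varepsilon$, and the convergence of their inverses. I would first assume $\Sigma$ is nondegenerate for distinct positive times; then $\Sigma_\varepsilon$ is invertible for small $\varepsilon$ by continuity. If $\Sigma$ is degenerate, I would interpret $\frac12\gamma^{\rm T}\Sigma^{-1}\gamma$ as the Gaussian rate $\sup_\theta\{\theta\cdot\gamma-\frac12\theta^{\rm T}\Sigma\theta\}$ and use lower semicontinuity in the other direction. Since only finiteness at $\gamma$ matters, this amounts to handling $\gamma$ in the range of $\Sigma$, for which it would suffice to perturb $\gamma$ slightly within the open set. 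If some $t_1=0$, then $\Gamma^n(0)=0$ and that coordinate is trivial.
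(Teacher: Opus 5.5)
Your proposal is correct and follows essentially the same route as the paper: fix $\gamma$, shrink to a neighborhood, pass from $\Gamma^n(t_i)/a_n$ to $\int_0^{t_i}\<\mu^n_s,J_\varepsilon\>ds$ via Lemma \ref{lem superexponential replacement}, apply the lower bound of Proposition \ref{prop mdp fluc} with the contracted rate identified as $\frac12\zeta^{\rm T}\Sigma_\varepsilon^{-1}\zeta$, evaluate at $\zeta=\gamma$, and let $\varepsilon\to0$ using $\Sigma_\varepsilon\to\Sigma$. Your explicit $\P(A)\ge\P(B)-\P(C)$ bookkeeping and your treatment of a possibly degenerate $\Sigma$ (which the paper tacitly assumes invertible) add detail but do not change the argument.
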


\begin{proof}
	Choose any $\gamma \in O_1 \times \ldots \times O_k$. It suffices to show that 
	\begin{equation}\label{lower 1}
		\liminf_{n \rightarrow \infty} \frac{1}{a_n^2} \log \P^n_{\rho_*} \Big(  \Gamma^n (t_i) / a_n \in O_i, 1 \leq i \leq k\Big) \geq - \frac{1}{2} \gamma^{\rm T} \Sigma^{-1} \gamma.
	\end{equation}
	Choose $\delta > 0$ such that $(\gamma_i - 2 \delta, \gamma_i + 2 \delta) \subset O_i$ for every $1 \leq i \leq k$.  By Lemma \ref{lem superexponential replacement}, the left hand side of \eqref{lower 1} is bounded from below by 
	\begin{align*}
		\liminf_{n \rightarrow \infty}& \frac{1}{a_n^2} \log \P^n_{\rho_*} \Big( \Gamma^n (t_i) / a_n \in (\gamma_i - 2 \delta, \gamma_i + 2 \delta), 1 \leq i \leq k\Big)\\
		&\geq \limsup_{\varepsilon \rightarrow 0} \liminf_{n \rightarrow \infty} \frac{1}{a_n^2} \log \P^n_{\rho_*} \Big( \int_0^{t_i} \langle \mu^n_s, J_\varepsilon \rangle ds \in (\gamma_i -  \delta, \gamma_i +  \delta), 1 \leq i \leq k\Big).
	\end{align*}
	As in the proof of Lemma \ref{lem finite upper bound}, the last line is bounded from below by
	\begin{align*}
		- \liminf_{\varepsilon \rightarrow 0} \; \inf& \Big\{ \frac{1}{2}  \zeta^{\rm T} \Sigma_{\varepsilon}^{-1} \zeta: \zeta_i \in (\gamma_i - \delta, \gamma_i + \delta), \; 1 \leq i \leq k\Big\}\\
		&\geq - \liminf_{\varepsilon \rightarrow 0}  \frac{1}{2} \gamma^{\rm T} \Sigma_{\varepsilon}^{-1} \gamma =  -\frac{1}{2} \gamma^{\rm T} \Sigma^{-1} \gamma,
	\end{align*}
	thus concluding the proof.
\end{proof}

\subsection{Proof of Theorem \ref{thm occu mdp}}\label{subsec pf main thm} By Lemmas \ref{lem finite upper bound} and \ref{lem finite lower bound}, the finite dimensional distributions of $\{ \Gamma^n (t) / a_n, 0 \leq t \leq T\}$, at times $\{t_i\}_{1 \leq i \leq k}$, satisfy the moderate deviation principles with decay rate $a_n^2$ and with rate function $(1/2) \gamma^{\rm T} \Sigma_{t_1,\ldots,t_k}^{-1} \gamma$ with $\gamma = (\gamma_1,\ldots,\gamma_k)^{\rm T}$ and $\Sigma_{t_1,\ldots,t_k}$ defined in Lemma \ref{lem finite upper bound}.  Since we have proved in Subsection \ref{subsec exp tight} that the sequence of processes $\{ \Gamma^n (t)/ a_n, 0 \leq t \leq T\}$ is exponentially tight, we conclude the proof immediately by using \cite[Theorem 4.28]{feng2006large}. 

\subsection{Proof of Corollary \ref{cor mdp degree one}}\label{subsec pf coro}  In this subsection, we prove Corollary \ref{cor mdp degree one}. It suffices to show that, for any $\delta > 0$,
\[\limsup_{n \rightarrow \infty} \frac{1}{a_n^2} \log \P_{\rho_*}^n \Big(\sup_{0 \leq t \leq T} \Big| \Gamma_f^n(t) - \varphi_f^\prime (\rho_{*}) \Gamma_n (t)\Big| > a_n\delta\Big) = - \infty.\]
Note that for any local function $f: \Omega_{n}^1 \rightarrow \R$, $f - \varphi_f (\rho_{*}) - \varphi_f^\prime (\rho_{*})(\eta_0 - \rho_{*})$  can be expressed as a linear combination of $\prod_{x \in A} \bar{\eta}_x$, $A \subset \T_n$ finite, $|A| \geq 2$. Thus, it suffices to show that, for any finite $A \subset \T_n$ with $|A| \geq 2$ and for any $\delta > 0$,
\[\limsup_{n \rightarrow \infty} \frac{1}{a_n^2} \log \P_{\rho_*}^n \Big(\sup_{0 \leq t \leq T} \Big| \sqrt{n}  \int_0^t \prod_{x \in A} \bar{\eta}_x (s) ds \Big| > a_n\delta\Big) = - \infty.\]
Assume that $|A| = m \geq 2$ and label the points in $A$ by $x_1 < x_2 < \ldots < x_m$. Similar to the proof of Lemma \ref{lem superexponential replacement}, we have
\[\limsup_{\varepsilon \rightarrow 0} \limsup_{n \rightarrow \infty} \frac{1}{a_n^2} \log \P_{\rho_*}^n \Big(\sup_{0 \leq t \leq T} \Big| \sqrt{n}  \int_0^t \prod_{x \in A} \bar{\eta}_x (s)  - \overleftarrow{\eta}_{x_1}^{\varepsilon n} (s) [\prod_{x \in A \atop x \neq x_1,x_m} \bar{\eta}_x (s)]\overrightarrow{\eta}_{x_m}^{\varepsilon n} (s)  ds \Big| > a_n\delta\Big) = - \infty.\]
Then, it suffices to show that, for any $\varepsilon > 0$,
\[\limsup_{n \rightarrow \infty} \frac{1}{a_n^2} \log \P_{\rho_*}^n \Big(\sup_{0 \leq t \leq T} \Big| \sqrt{n}  \int_0^t  \overleftarrow{\eta}_{x_1}^{\varepsilon n} (s) [\prod_{x \in A \atop x \neq x_1,x_m} \bar{\eta}_x (s)]\overrightarrow{\eta}_{x_m}^{\varepsilon n} (s)  ds \Big| > a_n\delta\Big) = - \infty.\]
By Markov's inequality, we only need to show that for any $M > 0$, 
\[\limsup_{n \rightarrow \infty} \frac{1}{a_n^2} \log \E_{\rho_*}^n \Big[\exp \Big\{ \sup_{0 \leq t \leq T} \Big| a_n M \sqrt{n}  \int_0^t  \overleftarrow{\eta}_{x_1}^{\varepsilon n} (s) [\prod_{x \in A \atop x \neq x_1,x_m} \bar{\eta}_x (s)]\overrightarrow{\eta}_{x_m}^{\varepsilon n} (s)  ds \Big| \Big\} \Big] = 0.\]
By using Garsia-Rodemich-Rumsey inequality as in the proof of Lemma \ref{lem superexponential replacement}, we can  first remove the supremum over time inside the exponential. Then, by Feynman-Kac formula, the last expression on the left hand side is bounded by
\begin{equation}\label{feynman kac}
\begin{aligned}
	\frac{T}{a_n^2} \sup_{f: \nu_{\rho_*}^n {\rm density}} \Big\{& \int a_n M \sqrt{n} \overleftarrow{\eta}_{x_1}^{\varepsilon n}  [\prod_{x \in A \atop x \neq x_1,x_m} \bar{\eta}_x ]\overrightarrow{\eta}_{x_m}^{\varepsilon n}   f (\eta)d \nu_{\rho_*}^n \\&+ \int \frac{1}{2} L_n^* \mathbf{1} (\eta) f(\eta) d \nu_{\rho_*}^n 
	- \int \Gamma_n (\sqrt{f}) d\nu_{\rho_*}^n \Big\}.
\end{aligned}
\end{equation}
By entropy inequality and Logarithmic Sobolev inequality for the Glauber dynamics (see \eqref{log sob ineq}), we bound the first term inside the supremum by, for any $\gamma > 0$,
\[\frac{1}{\gamma} \Big\{ \kappa \int \Gamma_n^{\rm r} (\sqrt{f}) d \nu_{\rho_*}^n + \log E_{\nu_{\rho_*}^n} \big[ \exp \big\{ \gamma a_n M \sqrt{n} \overleftarrow{\eta}_{x_1}^{\varepsilon n}  [\prod_{x \in A \atop x \neq x_1,x_m} \bar{\eta}_x ]\overrightarrow{\eta}_{x_m}^{\varepsilon n}  \big\}\big] \Big\}.\]
Since $\overleftarrow{\eta}_{x_1}^{\varepsilon n}$ and $\overrightarrow{\eta}_{x_m}^{\varepsilon n}$ are both sub-Gaussian with variance of order $(\varepsilon n)^{-1}$ with respect to the measure $\nu_{\rho_*}^n$ and  $\prod_{x \in A \atop x \neq x_1,x_m} \bar{\eta}_x$ is bounded, by taking $\gamma a_n M\sqrt{n} = \varepsilon_0 \varepsilon n$ for small enough $\varepsilon_0 > 0$,  the last expression is bounded by
\[\frac{a_n M }{\varepsilon_0 \varepsilon \sqrt{n}} \big\{ \kappa \int \Gamma_n^{\rm r} (\sqrt{f}) d \nu_{\rho_*}^n + \log 3\big\}.\]
Here, we also used properties of sub-Gaussian random variables stated after \eqref{eqn14}. Since $a_n \ll \sqrt{n}$,   together with \eqref{Ln* 1}, for $n$ large enough, we bound \eqref{feynman kac} by $C/a_n^2$. This concludes the proof since $a_n \gg 1$.

\appendix

\section{Calculations}

\subsection{Proof of \eqref{eqn13}}\label{subsec eqn13} By direct calculations,
\[\begin{aligned} L_{n}^{*} \mathbf{1} &=  L_{n}^{{\rm r},*} \mathbf{1} = \sum_{x\in \T_n^d} \Big\{  \frac{\nu_{\rho_*}^n (\eta^x)}{\nu_{\rho_*}^n (\eta)} c_x (\eta^x) - c_x (\eta) \Big\}\\
	 & =\sum_{x \in \mathbb{T}_{n}^{d}}\left\{\eta_{x}\left(\frac{1-\rho_{*}}{\rho_{*}}\left[a+\frac{\lambda}{2 d} \sum_{y:|y-x|=1} \eta_{y}\right]-b\right)+\left(1-\eta_{x}\right)\left(\frac{\rho_{*}}{1-\rho_{*}} b-\left[a+\frac{\lambda}{2 d} \sum_{y:|y-x|=1} \eta_{y}\right]\right)\right\} \\ & =\sum_{x \in \mathbb{T}_{n}^{d}}\left(\frac{\eta_{x}}{\rho_{*}}-\frac{1-\eta_{x}}{1-\rho_{*}}\right)\left(\left[a+\frac{\lambda}{2 d} \sum_{y:|y-x|=1} \eta_{y}\right]\left(1-\rho_{*}\right)-b \rho_{*}\right) \\ & =\frac{1}{\chi\left(\rho_{*}\right)} \sum_{x \in \mathbb{T}_{n}^{d}} \bar{\eta}_{x}\left(F\left(\rho_{*}\right)+\frac{\lambda\left(1-\rho_{*}\right)}{2 d} \sum_{y:|y-x|=1} \bar{\eta}_{y}\right) \\ & =\frac{\lambda}{2 d \rho_{*}} \sum_{x \in \mathbb{T}_{n}^{d}} \sum_{y:|y-x|=1} \bar{\eta}_{x} \bar{\eta}_{y}.\end{aligned}\]
	 In the first identity, we used the fact that $L_n^{\rm ex,*} \mathbf{1} = 0$; in the last one, we used $F(\rho_*) = 0$.
 
 \subsection{Proof of \eqref{eqn15}-\eqref{h ell 1}}\label{sec cal1}
 
 We first prove \eqref{eqn15}. For any $\gamma > 0$,
 \begin{align*}
 	&E_{\mu^n_s} \Big[ \frac{\theta  \beta_{d,n}^2}{n^2} \sum_{y,z \in \T_n^d \atop |y-z|=1} h_{y,z}^\ell (g_n, \eta)^2 \Big] \\
 	&\leq \frac{H(\mu^n_s | \nu^n_{\rho_*})}{\gamma} + \frac{1}{\gamma} \log E_{\nu^n_{\rho_*}} \Big[ \exp \Big\{  \frac{\gamma \theta  \beta_{d,n}^2}{n^2} \sum_{y,z \in \T_n^d \atop |y-z|=1} h_{y,z}^\ell (g_n, \eta)^2 \Big\}\Big] \\
 	&\leq \frac{H(\mu^n_s | \nu^n_{\rho_*})}{\gamma} + \frac{1}{\gamma (2\ell+1)^d} \sum_{y,z \in \T_n^d \atop |y-z|=1} \log E_{\nu^n_{\rho_*}} \Big[ \exp \Big\{  \frac{\gamma (2\ell+1)^d\theta  \beta_{d,n}^2}{n^2}  h_{y,z}^\ell (g_n, \eta)^2 \Big\}\Big].
 \end{align*}
 Since $h_{y,z}^\ell (g_n, \eta)$ is sub-Gaussian with variance \[\sum_{x\in \T_n^d} \phi_\ell (y-x,z-x)^2 g_n (x-e_i)^2\leq  C_0 g_d (\ell) \|g_n\|_{\ell^\infty (\T_n^d)}^2,\] we conclude the proof by choosing $\gamma = \varepsilon_0 n^2 / (\theta \ell^d g_d (\ell) \beta_{d,n}^2 \|g_n\|_{\ell^\infty (\T_n^d)}^2)$ for $\varepsilon_0 > 0$ small enough.

Now, we prove \eqref{V i ell}. For $1 \leq i \leq d$, for any $\gamma > 0$, we bound
\begin{align*}
	E_{\mu^n_s} \Big[ \Big|\frac{1}{\theta } V_i^\ell (\mathbf{1},\eta)\Big|\Big] 
	&\leq \frac{H(\mu^n_s | \nu^n_{\rho_*})}{\gamma} + \frac{1}{\gamma} \log E_{\nu^n_{\rho_*}} \Big[ \exp \Big\{  |\frac{\gamma}{\theta } V_i^\ell (\mathbf{1},\eta)| \Big\}\Big] \\
	&\leq \frac{H(\mu^n_s | \nu^n_{\rho_*})}{\gamma} + \frac{1}{\gamma(2\ell+1)^d} \sum_{x \in \T_n^d} \log E_{\nu^n_{\rho_*}} \Big[ \exp \Big\{  \frac{\gamma(2\ell+1)^d}{\theta } \overleftarrow{\eta}_x^\ell  \overrightarrow{\eta}_{x+e_i}^\ell \Big\}\Big].
\end{align*}
We conclude the proof by choosing $\gamma = \varepsilon_0 \theta$ for $\varepsilon_0 > 0$ small enough.

Finally, we prove \eqref{h ell 1}. For any $\gamma > 0$, 
\begin{align*}
	&E_{\mu^n_s} \Big[ \frac{1}{ n^2 \theta}  \sum_{y,z \in \T_n^d \atop |y-z|=1} h_{y,z}^\ell (\mathbf{1}, \eta)^2 \Big] \\
	&\leq  \frac{H(\mu^n_s | \nu^n_{\rho_*})}{\gamma} + \frac{1}{\gamma} \log E_{\nu^n_{\rho_*}} \Big[ \exp \Big\{  \frac{\gamma}{ n^2 \theta}  \sum_{y,z \in \T_n^d \atop |y-z|=1} h_{y,z}^\ell (\mathbf{1}, \eta)^2 \Big\}\Big] \\
	&\leq \frac{H(\mu^n_s | \nu^n_{\rho_*})}{\gamma} + \frac{1}{\gamma (2\ell+1)^d} \sum_{y,z \in \T_n^d \atop |y-z|=1} \log E_{\nu^n_{\rho_*}} \Big[ \exp \Big\{  \frac{\gamma (2\ell+1)^d}{n^2 \theta}  h_{y,z}^\ell (\mathbf{1}, \eta)^2 \Big\}\Big].
\end{align*}
We conclude the proof by choosing $\gamma = \varepsilon_0 n^2 \theta/ (\ell^d g_d (\ell))$ for $\varepsilon_0 > 0$ small enough.

\bibliographystyle{plain}
\bibliography{bibliography.bib}

\begin{thebibliography}{10}

\bibitem{bernardin2004fluctuations}
C.~Bernardin.
\newblock Fluctuations in the occupation time of a site in the asymmetric
  simple exclusion process.
\newblock {\em The Annals of Probability}, 32(1B):855--879, 2004.

\bibitem{bernardin2016occupation}
C.~Bernardin, P.~Gon{\c{c}}alves, and S.~Sethuraman.
\newblock Occupation times of long-range exclusion and connections to {KPZ}
  class exponents.
\newblock {\em Probability Theory and Related Fields}, 166(1-2):365--428, 2016.

\bibitem{chang2004occupation}
C.~C. Chang, C.~Landim, and T-Y Lee.
\newblock Occupation time large deviations of two-dimensional symmetric simple
  exclusion process.
\newblock {\em The Annals of Probability}, 32(1B):661--691, 2004.

\bibitem{de1986reaction}
A.~De~Masi, P.~A. Ferrari, and J.~L. Lebowitz.
\newblock Reaction-diffusion equations for interacting particle systems.
\newblock {\em Journal of statistical physics}, 44(3):589--644, 1986.

\bibitem{erhard2024nonequilibrium}
D.~Erhard, T.~Franco, and T.~C. Xu.
\newblock Nonequilibrium joint fluctuations for current and occupation time in
  the symmetric exclusion process.
\newblock {\em Electronic Journal of Probability}, 29:1--53, 2024.

\bibitem{feng2006large}
J.~Feng and T.~G. Kurtz.
\newblock {\em Large deviations for stochastic processes}.
\newblock Number 131. American Mathematical Soc., 2006.

\bibitem{fontes2021additive}
L.~R. Fontes and T.~C. Xu.
\newblock Additive functionals of exclusion processes from non-equilibrium.
\newblock {\em arXiv preprint arXiv:2111.08804}, 2021.

\bibitem{gao2024moderate}
F.~Q. Gao and J.~Quastel.
\newblock Moderate deviations for lattice gases with mixing conditions.
\newblock {\em Electronic Journal of Probability}, 29:1--23, 2024.

\bibitem{gao2025deviation}
F.~Q. Gao and J.~Quastel.
\newblock Deviation inequalities and moderate deviations for the symmetric
  exclusion process.
\newblock In {\em Annales de l'Institut Henri Poincare (B) Probabilites et
  statistiques}, volume~61, pages 2293--2316. Institut Henri Poincar{\'e},
  2025.

\bibitem{gonccalves2013scaling}
P.~Gon{\c{c}}alves and M.~Jara.
\newblock Scaling limits of additive functionals of interacting particle
  systems.
\newblock {\em Communications on Pure and Applied Mathematics}, 66(5):649--677,
  2013.

\bibitem{gonccalves2024clt}
P.~Gon{\c{c}}alves, M.~Jara, R.~Marinho, and O.~Menezes.
\newblock {CLT} for {NESS} of a reaction-diffusion model.
\newblock {\em Probability Theory and Related Fields}, 190(1):337--377, 2024.

\bibitem{jara2018non}
M.~Jara and O.~Menezes.
\newblock Non-equilibrium fluctuations of interacting particle systems.
\newblock {\em arXiv preprint arXiv:1810.09526}, 2018.

\bibitem{jaram18nonequilireaction}
M.~Jara and O.~Menezes.
\newblock Non-equilibrium fluctuations for a reaction-diffusion model via
  relative entropy.
\newblock {\em Markov Processes And Related Fields}, 26(1):95--124, 2020.

\bibitem{kipnis1987fluctuations}
C~Kipnis.
\newblock Fluctuations des temps d'occupation d'un site dans l'exclusion simple
  sym{\'e}trique.
\newblock In {\em Annales de l'IHP Probabilit{\'e}s et statistiques},
  volume~23, pages 21--35, 1987.

\bibitem{klscaling}
C.~Kipnis and C.~Landim.
\newblock {\em Scaling limits of interacting particle systems}, volume 320.
\newblock Springer Science \& Business Media, 2013.

\bibitem{kipnis1986central}
C.~Kipnis and S.~R.~S. Varadhan.
\newblock Central limit theorem for additive functionals of reversible markov
  processes and applications to simple exclusions.
\newblock {\em Communications in Mathematical Physics}, 104(1):1--19, 1986.

\bibitem{landim1992occupation}
C.~Landim.
\newblock Occupation time large deviations for the symmetric simple exclusion
  process.
\newblock {\em The Annals of Probability}, pages 206--231, 1992.

\bibitem{li2008upper}
Z.~Li and M.~Mao.
\newblock Upper bound on the occupation time in the simple exclusion process.
\newblock {\em Theoretical and Mathematical Physics}, 156(1):1089--1100, 2008.

\bibitem{liggettips}
T.~M. Liggett.
\newblock {\em Interacting particle systems}, volume 276.
\newblock Springer Science \& Business Media, 2012.

\bibitem{quastel2002central}
J.~Quastel, H.~Jankowski, and J.~Sheriff.
\newblock Central limit theorem for zero-range processes.
\newblock {\em Methods and applications of analysis}, 9(3):393--406, 2002.

\bibitem{seppalainen2003transience}
T.~Sepp{\"a}l{\"a}inen and S.~Sethuraman.
\newblock Transience of second-class particles and diffusive bounds for
  additive functionals in one-dimensional asymmetric and exclusion processes.
\newblock {\em The Annals of Probability}, 31(1):148--169, 2003.

\bibitem{sethuraman2000central}
S.~Sethuraman.
\newblock Central limit theorems for additive functionals of the simple
  exclusion process.
\newblock {\em Annals of Probability}, pages 277--302, 2000.

\bibitem{sethuraman2006centralcor}
S~Sethuraman.
\newblock Correction: Central limit theorems for additive functionals of the
  simple exclusion process.
\newblock {\em The Annals of Probability}, 34(1):427--428, 2006.

\bibitem{sethuraman2006superdiffusivity}
S.~Sethuraman.
\newblock Superdiffusivity of occupation-time variance in 2-dimensional
  asymmetric exclusion processes with density $\rho= 1/2$.
\newblock {\em Journal of statistical physics}, 123(4):787--802, 2006.

\bibitem{sethuraman1996central}
S.~Sethuraman and L.~Xu.
\newblock A central limit theorem for reversible exclusion and zero-range
  particle systems.
\newblock {\em The Annals of Probability}, 24(4):1842--1870, 1996.

\bibitem{whitt2007proofs}
W.~Whitt.
\newblock Proofs of the martingale {FCLT}.
\newblock {\em Probability Surveys}, 4:268--302, 2007.

\bibitem{xu2025nonequilibrium}
T.~C. Xu and L.~Zhao.
\newblock Nonequilibrium fluctuations for the occupation time of the {SSEP} in
  $d \geq 2$.
\newblock {\em arXiv preprint arXiv:2512.09424}, 2025.

\bibitem{xue2025central}
X.~F. Xue.
\newblock Central limit theorems and moderate deviations for additive
  functionals of {SSEP} on regular trees.
\newblock {\em arXiv preprint arXiv:2504.15581}, 2025.

\bibitem{xue2026equilibrium}
X.~F. Xue.
\newblock Equilibrium moderate deviations for occupation times of {SSEP} on
  regular trees.
\newblock {\em Bernoulli}, 32(1):823--847, 2026.

\bibitem{zhao2025moderate}
L.~J. Zhao.
\newblock Moderate deviation principles for the {WASEP}.
\newblock {\em Journal of Statistical Physics}, 192(6):85, 2025.

\bibitem{zhao2026moderate}
L.~J. Zhao.
\newblock Moderate deviation principles for a reaction diffusion model in
  non-equilibrium.
\newblock {\em Bernoulli}, 32(2):1098--1121, 2026.

\end{thebibliography}
\end{document}